\documentclass[final]{siamltex}
\usepackage{amssymb,amsmath}
\usepackage{float,epsfig}
\usepackage{algpseudocode}
\usepackage{algorithm}
\usepackage{enumitem}
\usepackage{array}
\usepackage{tikz}
\usepackage{csquotes}
\usepackage{url}
\usepackage[
    a4paper,        
    left=1in,      
    right=1in,     
    top=1in,       
    bottom=1in,    
    includehead,    
    includefoot     
]{geometry}

\newcommand{\Arrow}[1]{%
	\parbox{#1}{\tikz{\draw[->](0,0)--(#1,0);}}
}

\newcommand{\rsss}{\rotatebox[]{90}{$\boxminus$}\kern-0.7em{\mathrel{\raisebox{.2ex}{\Arrow{.35cm}}}}\!\!}

\newcommand{\csss}{\text{$\boxminus\kern-0.655em{\mathrel{\raisebox{-.2ex}{\rotatebox[]{-90}{\Arrow{.34cm}}}}}$\ }}

\newtheorem{remark}[theorem]{ Remark}
\newtheorem{exam}[theorem]{\bf Example}
\newtheorem{observation}[theorem]{ Observation}
\newcommand{\ba}{\begin{array}}
\newcommand{\ea}{\end{array}}

\newcommand{\be}{\begin{equation}}
\newcommand{\ee}{\end{equation}}
\newcommand{\beano}{\begin{eqnarray*}}
\newcommand{\eeano}{\end{eqnarray*}}

\def\diag{\mathrm{diag}}

\usepackage[us,12hr]{datetime}

\title{Arithmetical Structures on Ladder Graphs }
\author{Namita Behera \thanks{Department of Mathematics, Sikkim University, Sikkim-737102, India, ({\tt nbehera@cus.ac.in, niku.namita@gmail.com}) } \and Dilli Ram Chhetri  \thanks{Department of Mathematics, Sikkim University, Sikkim-737102, India, ({\tt drchhetri.22pdmt01@sikkimuniversity.ac.in, dilluchhetri1@gmail.com}).} \and Raj Bhawan Yadav \thanks{Department of Mathematics, Sikkim University, Sikkim-737102, India ({\tt rbyadav01@cus.ac.in})}
 }

\begin{document}

\maketitle

\begin{abstract}
In this paper, we investigate arithmetical structures on Cartesian product graphs, particularly, ladder graph of the form $P_2 \square P_m$ and grid graph of the form $P_n \square P_m$.  An arithmetical structure on a finite and connected graph $G$ is a pair $(\textbf{d}, \textbf{r})$ of positive integer vectors such that $\textbf{r}$ is primitive (the gcd of its entries is $1$) and $(\diag(\textbf{d}) - A)\textbf{r} = 0,$ where $A$ is the adjacency matrix of $G.$ Arithmetical structures have been widely studied for basic graph families such as paths and cycles. Extending these ideas to graph products, we first analyze the ladder graph $P_2 \square P_m$, deriving structural properties and identifying patterns in the corresponding arithmetical configurations. We then generalize these results to the grid graph $P_n \square P_m$, where increased complexity arises due to higher-dimensional interactions. Our work provides new insights into the behavior, characterization, and enumeration of arithmetical structures on grid-like graphs, contributing to the broader understanding of Laplacian-based invariants and their combinatorial properties.
\end{abstract}

\begin{keywords} Ladder graphs, grid graphs, adjacency matrix, Laplacian, arithmetical structures 
\end{keywords}

\begin{AMS}
11D72, 15B48, 11D45, 11B83, 11D68, 11C2
\end{AMS}

\section{Introduction}
The study of graph products occupies a central position in graph theory, offering a powerful framework for constructing complex graphs from simpler components. Among various graph products, the Cartesian product has been extensively investigated due to its rich structural properties and wide-ranging applications in combinatorics, network theory, and discrete optimization. Given two graphs $G$ and $H$, their Cartesian product, denoted by $G \square H$, is defined as the graph with vertex set $V(G) \times V(H)$, where two vertices $(g_1, h_1)$ and $(g_2, h_2)$ are adjacent if and only if either $g_1 = g_2$ and $h_1h_2 \in E(H)$, or $h_1 = h_2$ and $g_1g_2 \in E(G)$ \cite{CartesianProductofGraphs}. This operation plays a fundamental role in understanding higher-dimensional graph structures. Further, this construction preserves many structural properties of the factor graphs while introducing new combinatorial features

In parallel, the notion of arithmetical structures on graphs has emerged as an important topic connecting combinatorial graph theory with algebraic and number theoretic ideas. Arithmetical structures on graphs were introduced and systematically studied in \cite{HCEV18,BHDNJC18,arithmeticaloncompletegraphs,PathwithDoubleedgeArithstructure,keyesReiter2021boundingthenumberofArithstructure,LD89,KAAL20,BANBDCRBY2024,CHVC18,chhetri2025arithmeticalonFanGraph,criticalpolynomialofgraphbyLorenzini}, where they arise naturally in connection with Laplacian matrices and determinantal ideals.

Let $G = (V,E)$ be a finite and connected graph, where $V$ is the vertex set and 
$E \subseteq V \times V$ represents the edge set. An \emph{arithmetical structure} 
on $G$ is defined as a pair $(\mathbf{d}, \mathbf{r})$, where $\mathbf{d}$ and 
$\mathbf{r}$ are positive integer vectors, $\mathbf{r}$ is \emph{primitive} 
(i.e., the greatest common divisor of its entries is $1$), and
\[
(\operatorname{diag}(\mathbf{d}) - A)\mathbf{r} = 0,
\]
where $A$ is the adjacency matrix of $G$. The adjacency matrix 
$A = [a_{ij}]$ is a symmetric matrix defined by
\[
a_{ij} =
\begin{cases}
1, & \text{if } (i,j) \in E, \\
0, & \text{otherwise},
\end{cases}
\quad \text{for } i,j \in V.
\]

This definition extends the concept of the Laplacian arithmetical structure, 
where $\mathbf{d}$ represents the vector of vertex degrees and 
$\mathbf{r} = \mathbf{1} = (1,1,\dots,1)$. Note that $\mathbf{d}$ and 
$\mathbf{r}$ uniquely determine each other, allowing us to regard $\mathbf{d}$, 
$\mathbf{r}$, or the pair $(\mathbf{d}, \mathbf{r})$ as an arithmetical structure 
on $G$. To avoid ambiguity, we will sometimes refer to these as 
\emph{arithmetical $d$-structures} or \emph{arithmetical $r$-structures}~\cite{BANBDCRBY2024}

The set of all arithmetical structures on $G$ is denoted by 
$\operatorname{Arith}(G)$ and sometimes by $\mathcal{A}(G)$. 
The data $(G,\mathbf{d},\mathbf{r})$ together define an 
\emph{arithmetical graph}. We denote the set of $r$-structures of $G$ by $\mathcal{A}_{r}(G)$.
These structures are closely related to the critical group of a graph \cite{LD89,HCEV18,BHDNJC18,criticalgroupon(starandcompletegraph),AAJL18,SandpilegroupoverEllpiticCurves,archer2023},  chip-firing games \cite{Chipfiringgamesongraph1991,biggs1999,CP18,Corry2018DivisorsAS,archer2025}, and divisor theory on graphs, and they provide insight into the interplay between graph structure and integer valued functions.

Significant progress has been made in understanding arithmetical structures on specific families of graphs. For instance, the complete classification for path graphs $P_n$ is known \cite{braun2017arithmetical} and the number of arithmetical structures on $P_n$ is given by the Catalan numbers \cite{catalannumberbycarlitz}, revealing a deep connection with classical combinatorial objects. Similar studies have been carried out for cycles and certain classes of trees, where recursive and combinatorial techniques play a central role.

While arithmetical structures have been extensively studied for basic graph families such as paths, cycles, wheel graphs, Fan graphs, and trees \cite{lionelleivnewhatisasandpile, BANBDCRBY2024, chhetri2025arithmeticalonFanGraph},  comparatively less attention has been given to their behavior under graph products. In particular, Cartesian products of path graphs offer a natural and tractable setting for extending these ideas. The graph $P_2 \square P_m$, commonly known as a ladder graph, represents the simplest nontrivial product structure, while the more general graph $P_n \square P_m$ corresponds to a rectangular grid graph. These grid-like structures exhibit a high degree of regularity and symmetry, making them suitable for exploring how local arithmetical constraints propagate across larger networks.

The investigation of arithmetical structures on $P_2 \square P_m$ and $P_n \square P_m$ reveals new combinatorial phenomena arising from the interaction between the two path components. In ladder graphs, the relatively simple structure allows explicit constructions and recursive descriptions of arithmetical structures. However, as one moves to general grid graphs, additional complexity emerges due to the increased number of adjacency relations and the multidimensional nature of the graph. This makes the study both challenging and rich in structure.

The aim of this paper is to analyze the existence, characterization, and enumeration of arithmetical structures on the Cartesian product graphs $P_2 \square P_m$ and $P_n \square P_m$. By first examining the ladder case and then extending the results to general grid graphs, we seek to understand how arithmetical properties evolve under graph products and to contribute to the broader theory connecting Laplacian-based invariants with combinatorial graph constructions. Unlike paths, these graphs contain cycles and exhibit a richer combinatorial structure. We develop a structural framework to study arithmetical structures on these graphs, including a decomposition into symmetric and non-symmetric components. This approach allows us to relate certain classes of arithmetical structures on $P_2 \square P_m$ to those on $P_m$, while also identifying new families that arise purely from the product structure.

Our results provide a first step toward understanding arithmetical structures on product graphs and suggest further directions for studying more general Cartesian products such as $P_m \square P_n$.

The paper is organized as follows. In section~2, we present the necessary preliminaries. 
Section~3 is devoted to the study of arithmetical structures on ladder graphs, specifically the construction of $\operatorname{Arith}(P_2 \square P_m)$ from $\mathcal{A}(P_m)$. In section~4, we investigate the arithmetical structures on $P_2 \square P_m$ via $C_4$-decomposition. In addition, we study arithmetical structures on grid graph $P_n \square P_m$. Finally, Section~5 presents the conclusion.

\section{Preliminaries}
In this section, we present some fundamental definitions and results that will be used throughout the paper.
\begin{definition}
A square matrix $A$ is called reducible if there exists a permutation matrix $P$ such that $$PAP^{t} = \begin{pmatrix}
    A_1 & * \\
    0 & A_2 \\
\end{pmatrix}$$ for some non-trivial square matrices $A_1$ and $A_2.$    
\end{definition}

\begin{definition} [Neighbourhood of a vertex v in a graph G] \cite{west2001introduction} 
 The neighbourhood of a vertex v in a graph G is the subgraph of G induced by all vertices adjacent to v, i.e., the graph composed of the vertices adjacent to v and all edges connecting vertices adjacent to v.  The neighbourhood is often denoted by $N_G(v)$.
\end{definition}

Here we recall the classical concept of an $M$-matrix and study a class of $M$-matrices whose proper principal minors are positive and its determinant is non-negative. Let us begin with some definitions: \cite{HCEV18}

\begin{definition}[Real non-negative matrix] \cite{HCEV18}
A real square matrix is called non-negative if all its entries are non-negative real numbers.    
\end{definition}

\begin{definition}[$Z$-matrix] \cite{HCEV18}
 A real matrix $ A = (a_{i,j}) \in \mathbb{R}^{n \times n}$ is called a Z-matrix if $a_{i,j} \leq 0 ,\ for \ all \ i \neq j$. 
\end{definition}

\begin{definition}[$M$-matrix] \cite{HCEV18}
 A $Z$-matrix $A$ is an $M$-matrix if there exists a non-negative matrix $N$ and a non-negative number $\alpha$ such that such that $A = \alpha I- N \ and \ \alpha \geq \rho(N)$, where $\rho(N) = max\{|\lambda|  : \lambda \in \sigma(N)\}$.   
\end{definition}
The study of $M$-matrices can be divided into two major parts: non-singular M-matrices and singular $M$-matrices (see[\cite{Berman1994andPlemmons}, Section 6.2 and 6.4])

\begin{definition}
 \cite{HCEV18} A real matrix $A =(a_{i,j})$ is called an almost non-singular $M$-matrix if $A$ is a $Z$-matrix, all its proper principal minors are positive and its determinant is non-negative.    
\end{definition}

\begin{theorem} \cite{HCEV18}\label{Theorem 2.6 from Corrales and Valencia}
 If $M$ is a real $Z$-matrix, then the following conditions are equivalent: 
 \begin{itemize}
     \item [(1)] $M$ is an almost non-singular $M$-matrix.
     \item [(2)] $M +D$ is a non-singular $M$matrix for any diagonal matrix $D>_{\neq} 0$. 
     \item [(3)] $\det(M) \geq 0$ and $\det(M +D) {\gneq} \det(M +D'){\gneq} 0$  for any diagonal matrices $D >_{\neq} D'{\gneq} 0.$
 \end{itemize} 
\end{theorem}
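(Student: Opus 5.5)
I will prove the cycle of implications $(1)\Rightarrow(2)\Rightarrow(3)\Rightarrow(1)$. Two standard facts about non-singular $M$-matrices (see \cite{Berman1994andPlemmons}) will be used throughout. First, a $Z$-matrix is a non-singular $M$-matrix if and only if all its principal minors are positive. Second, for a $Z$-matrix $M$ and $s\ge 0$, the matrix $M+sI=(\alpha+s)I-N$ is a non-singular $M$-matrix when $\alpha+s>\rho(N)$.

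\emph{$(1)\Rightarrow(2)$.} Let $D=\diag(d_1,\dots,d_n)\ge 0$ with $D\neq 0$. I will show that every principal minor of $M+D$ is positive. Expanding by multilinearity in the rows,
\[
\det\bigl((M+D)[S]\bigr)=\sum_{T\subseteq S}\Bigl(\prod_{i\in T}d_i\Bigr)\det\bigl(M[S\setminus T]\bigr),
\]
with the convention $\det M[\emptyset]=1$. Now treat two cases.
\begin{itemize}
\item If $S\ne V$, every $M[S\setminus T]$ is a proper principal submatrix, so every minor on the right is positive. All the coefficients are non-negative and the $T=\emptyset$ term is positive, so the sum is positive.
\item If $S=V$, the $T=\emptyset$ term is $\det M\ge 0$. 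Choosing $T=\{i\}$ with $d_i>0$ gives a term $d_i\det M[V\setminus i]>0$, so the sum is again positive.
\end{itemize}
Since $M+D$ is a $Z$-matrix with all principal minors positive, it is a non-singular $M$-matrix.

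\emph{$(2)\Rightarrow(3)$.} By $(2)$, $\det(M+D)>0$ for every admissible $D$. Taking $D=\varepsilon I$ and letting $\varepsilon\to0^+$ gives $\det M\ge 0$ by continuity. For the strict monotonicity, let $D>_{\neq}D'\gneq 0$. Move from $D'$ to $D$ one diagonal entry at a time. Increasing a single entry $d_i$ changes the determinant by $(\Delta d_i)\det\bigl((M+D'')[V\setminus i]\bigr)$, where $D''$ is the current intermediate matrix. This principal minor of the non-singular $M$-matrix $M+D''$ is positive by (2) and the first standard fact, using $D''\ge D'\gneq0$. At least one increment is strictly positive, so $\det(M+D)>\det(M+D')>0$.

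\emph{$(3)\Rightarrow(1)$.} This is the direction I expect to be the main obstacle, because (3) only speaks about full determinants, whereas (1) requires the proper principal minors. The tool is a limiting argument. Fix a proper subset $S$ and let $D_t$ put $t$ on the diagonal entries outside $S$ and $0$ on those in $S$. Then
\[
\det(M+D_t)=t^{\,n-|S|}\det M[S]+O\bigl(t^{\,n-|S|-1}\bigr),
\]
and by (3) this quantity is positive and increasing in $t$. Hence $\det M[S]\ge 0$ for every proper $S$.

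To get strict positivity, I will argue by induction on $|S|$, applying the same reasoning with additional small diagonal perturbations inside $S$. Write $f(x)=\det(M+D_t+xE_{ii})$. Then $f$ is affine in $x$ with slope equal to a principal minor of $M+D_t$. Strict monotonicity in (3) forces this slope to be positive for all admissible perturbations, and dividing by the leading power of $t$ transfers the strictness to the minors of $M[S]$. The delicate point is to make sure this strictness survives the limit $t\to\infty$. If it fails, I would fall back on a Perron--Frobenius argument instead: writing $M=\alpha I-N$, condition (3) with $D=sI$ for all $s>0$ gives $\alpha\ge\rho(N)$. Monotonicity of the spectral radius on principal submatrices then gives $\alpha>\rho(N[S])$ whenever the corresponding minor is nonzero, and from this positivity of the proper minors follows.

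Together with $\det M\ge 0$, this yields that $M$ is an almost non-singular $M$-matrix, completing the cycle.
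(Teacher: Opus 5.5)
The paper gives no proof of this statement; it is quoted from Corrales--Valencia, so your argument has to stand on its own. Your step $(1)\Rightarrow(2)$, via the subset expansion of $\det\bigl((M+D)[S]\bigr)$, is correct, and so is $(2)\Rightarrow(3)$, via raising one diagonal entry at a time.

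The gap is in $(3)\Rightarrow(1)$. You establish $\det M[S]\ge 0$ for proper $S$, but never $\det M[S]>0$, which is the actual content of (1).

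\emph{The main route cannot supply strictness.} Whatever strict inequality holds for each finite $t$, dividing by $t^{\,n-|S|}$ and letting $t\to\infty$ gives only a weak inequality. Perturbing \emph{inside} $S$ does not help either, since it changes $M[S]$ itself. More fundamentally, the argument treats $\det(M+D)$ only as a multi-affine polynomial and never uses the sign pattern of $M$, yet strictness depends on it. For the non-$Z$ matrix $M=\begin{pmatrix}1&0&1\\0&1&1\\-1&-1&0\end{pmatrix}$ one has $\det(M+D)=(1+d_1)(1+d_2)d_3+(1+d_1)+(1+d_2)$. This satisfies (3), while $\det M[\{3\}]=0$.

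\emph{The fallback is circular.} The step ``$\alpha>\rho(N[S])$ whenever the minor is nonzero'' presupposes $\det M[S]\neq 0$. Weak monotonicity $\rho(N[S])\le\rho(N)\le\alpha$ again yields only $\det M[S]\ge 0$; strict monotonicity would need $N$ irreducible, which is not assumed. The fallback also uses only the scalar shifts $D=sI$, and these cannot suffice: $M=\diag(0,1)$ has $\det M\ge 0$ and $\det(M+sI)>0$ for all $s>0$, but $\det M[\{1\}]=0$.

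\emph{The missing idea.} Perturb by a fixed amount at an index \emph{outside} $S$, so that $M[S]$ is untouched and no limit is taken, and let Perron--Frobenius bring in the $Z$-structure. Concretely, prove $(3)\Rightarrow(2)\Rightarrow(1)$.
\begin{enumerate}
\item For $(3)\Rightarrow(2)$: take a diagonal $D\ge 0$, $D\neq 0$, and write $M+D=\beta I-N'$ with $N'\ge 0$. Since $\rho(N')$ is an eigenvalue of $N'$, if $\beta\le\rho(N')$ then $M+D+tI$ is singular for $t=\rho(N')-\beta\ge 0$. This contradicts $\det(M+D+tI)>0$, which (3) guarantees because $D+tI\ge 0$ is nonzero. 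Hence $M+D$ is a non-singular $M$-matrix.
\item For $(2)\Rightarrow(1)$: $\det M\ge 0$ is already part of (3). For a proper $S$, pick any $i\notin S$. The principal minor of the non-singular $M$-matrix $M+\varepsilon E_{ii}$ on $S$ is positive, and it equals $\det M[S]$ because the perturbation lies outside $S$.
\end{enumerate}
Together with the two implications you already have, this closes the cycle.
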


\begin{theorem} \cite{HCEV18}\label{Theorem 3.2 from Corrales and Valencia}
Let $M$ be a $Z$-matrix. If there exists ${\bf r}$ with all its entries positive such that $M{\bf r}^{t} = 0^t,$ then $M$ is an $M$-matrix. Moreover, $M$ is an almost non-singular $M$-matrix with $\det(M) = 0$ if and only if $M$ is irreducible and there exists ${\bf r}$ with all its entries positive such that $M{\bf r}^t = 0^t. $   
\end{theorem}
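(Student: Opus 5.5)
We need to prove Theorem 3.2 from Corrales–Valencia. Let $M$ be a $Z$-matrix and suppose there is a vector $\mathbf{r}$ with all entries positive such that $M\mathbf{r}^t=0^t$. The claims are: (a) $M$ is an $M$-matrix; (b) $M$ is an almost non-singular $M$-matrix with $\det M=0$ if and only if $M$ is irreducible and such a positive $\mathbf{r}$ exists.

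\emph{Part (a): $M$ is an $M$-matrix.} Choose $\alpha\ge\max_i m_{ii}$ and $\alpha\ge 0$, and set $N=\alpha I-M$. Then $N$ is non-negative, since $M$ is a $Z$-matrix. Let $R=\diag(\mathbf{r})$. Because $N\mathbf{r}=\alpha\mathbf{r}$, the matrix $R^{-1}NR$ has every row sum equal to $\alpha$. Hence
\[
\rho(N)=\rho(R^{-1}NR)\le \|R^{-1}NR\|_\infty=\alpha .
\]
So $M=\alpha I-N$ with $\alpha\ge\rho(N)$, which is the definition of an $M$-matrix. (In fact $\rho(N)=\alpha$, since $\alpha$ is an eigenvalue.)

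\emph{Part (b), direction $(\Leftarrow)$.} Assume $M$ is irreducible and $M\mathbf{r}=0$ with $\mathbf{r}>0$. Then $\det M=0$. Take a proper principal submatrix $M_S$, indexed by $S\subsetneq\{1,\dots,n\}$, and write $N_S=\alpha I-M_S$, which is the corresponding principal submatrix of $N$. Irreducibility of $N$ gives an edge from $S$ to its complement, so some $i\in S$ has $n_{ij}>0$ for some $j\notin S$. Therefore $N_S\mathbf{r}_S\le\alpha\mathbf{r}_S$, with strict inequality in at least one coordinate.
\begin{itemize}
\item By Perron--Frobenius, the $N_S$-invariant irreducible blocks give $\rho(N_S)<\alpha$. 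I would prove this either via the standard fact that a proper principal submatrix of an irreducible non-negative matrix has strictly smaller spectral radius, or directly via Collatz--Wielandt on each irreducible block, using the strict inequality propagated along paths.
\item Then every real eigenvalue of $M_S$ is positive. A $Z$-matrix with $\rho(N_S)<\alpha$ is a non-singular $M$-matrix, hence has positive determinant.
\end{itemize}
Applying this to all proper principal submatrices, every proper principal minor is positive and $\det M=0$. So $M$ is almost non-singular with $\det M=0$.

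\emph{Part (b), direction $(\Rightarrow)$.} Assume $M$ is almost non-singular with $\det M=0$, and suppose for contradiction that $M$ is reducible. Then after a permutation
\[
M\sim\begin{pmatrix}A_1&*\\0&A_2\end{pmatrix},
\]
so $0=\det M=\det A_1\det A_2$. But $A_1$ and $A_2$ are proper principal submatrices, so both determinants are positive, a contradiction. Hence $M$ is irreducible.

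For the positive null vector:
\begin{itemize}
\item Since $M$ is an $M$-matrix with $\det M=0$, we have $\rho(N)=\alpha$.
\item $N$ is irreducible and non-negative, so Perron--Frobenius gives a strictly positive eigenvector $\mathbf{r}$ with $N\mathbf{r}=\rho(N)\mathbf{r}=\alpha\mathbf{r}$, i.e.\ $M\mathbf{r}=0$. Scaling gives a positive vector (integer when needed).
\end{itemize}
To justify that $M$ is an $M$-matrix here, use Theorem~\ref{Theorem 2.6 from Corrales and Valencia}: $M+\varepsilon I$ is a non-singular $M$-matrix for every $\varepsilon>0$, so $\rho(N)<\alpha+\varepsilon$ for all $\varepsilon$, which gives $\rho(N)\le\alpha$. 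Singularity of $M$ then forces $\rho(N)=\alpha$, since $\alpha$ is an eigenvalue of $N$ and $\alpha\le\rho(N)$.

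\emph{Main obstacle.} The delicate step is the strict inequality $\rho(N_S)<\alpha$ for proper principal submatrices in the irreducible case. I would rely on the monotonicity of the Perron root for irreducible non-negative matrices: if $0\le B\le N$ entrywise, $B\ne N$, and $N$ is irreducible, then $\rho(B)<\rho(N)$. I would apply this with $B$ equal to $N_S$ padded by zeros.
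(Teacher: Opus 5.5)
The paper gives no proof of this statement. It is quoted from Corrales--Valencia \cite{HCEV18} as background, with the surrounding text pointing to Berman--Plemmons, Sections 6.2 and 6.4, so there is no in-paper argument to compare your route against.

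Judged on its own, your proposal is a correct, self-contained Perron--Frobenius proof:
\begin{itemize}
\item The diagonal scaling $\rho(N)=\rho(R^{-1}NR)\le\|R^{-1}NR\|_\infty=\alpha$ settles the first claim.
\item Strict monotonicity of the Perron root for irreducible $N$ gives $\rho(N_S)<\rho(N)=\alpha$. So every proper principal submatrix is a non-singular $M$-matrix with positive determinant.
\item The block-triangular factorization $\det M=\det A_1\det A_2$ forces irreducibility in the converse.
\item The Perron vector of $N$, with $\rho(N)=\alpha$, supplies the positive null vector.
\end{itemize}

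Three small points would tighten it.

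\emph{The $(\Rightarrow)$ direction.} Theorem~\ref{Theorem 2.6 from Corrales and Valencia}(2) only says that $M+\varepsilon I=\alpha' I-N'$ with $\alpha'>\rho(N')$ for \emph{some} splitting. To conclude $\rho(N)<\alpha+\varepsilon$ for your particular $N=\alpha I-M$, you implicitly use that $s-\rho(B)$ does not depend on the splitting $sI-B$ with $B\ge 0$. That fact is true and standard, but a shorter route avoids it. Since $\rho(N)$ is an eigenvalue of $N$, the number $\alpha-\rho(N)$ is a real eigenvalue of $M$. If it were negative, $M+(\rho(N)-\alpha)I$ would be singular, contradicting Theorem~\ref{Theorem 2.6 from Corrales and Valencia}(2). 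You can even bypass that theorem: write $\det(M+tI)=\sum_{k=0}^{n}t^{n-k}E_k(M)$, where $E_k$ is the sum of the $k\times k$ principal minors. This is positive for $t>0$ when all proper principal minors are positive and $\det M\ge 0$.

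\emph{The $(\Leftarrow)$ direction, if you use Collatz--Wielandt.} No propagation along paths is needed. Every irreducible diagonal block of $N_S$ is indexed by a nonempty proper subset $T$ of the indices. Irreducibility of $N$ therefore gives an edge leaving $T$, which already makes $B_T\mathbf r_T\le\alpha\mathbf r_T$ strict in some coordinate of $T$. Pairing with a left Perron vector of $B_T$ then yields $\rho(B_T)<\alpha$.

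\emph{The remark ``integer when needed''.} This needs $M$ to be rational and $\dim\ker M=1$, which follows from a nonzero $(n-1)\times(n-1)$ principal minor. It is not required for the statement as posed.
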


\begin{definition}
Let $A = [a_{ij}] \in \mathbb{R}^{m \times n}$ and 
$B \in \mathbb{R}^{p \times q}$. The \emph{Kronecker product} 
of $A$ and $B$, denoted by $A \otimes B$, is the block matrix 
in $\mathbb{R}^{mp \times nq}$ defined by
$$
A \otimes B =
\begin{pmatrix}
a_{11}B & a_{12}B & \cdots & a_{1n}B \\
a_{21}B & a_{22}B & \cdots & a_{2n}B \\
\vdots  & \vdots  & \ddots & \vdots  \\
a_{m1}B & a_{m2}B & \cdots & a_{mn}B
\end{pmatrix}.
$$
\end{definition}

Let $A, C$ and $B, D$ be matrices of appropriate sizes. Then the Kronecker product satisfies the following properties:
\begin{itemize}
\item Mixed-product property: $
(A \otimes B)(C \otimes D) = (AC) \otimes (BD).$
\item Distributivity: $A \otimes (B + C) = A \otimes B + A \otimes C, \,\,\,\, $ 
$(A + C) \otimes B = A \otimes B + C \otimes B.
$
\item Scalar multiplication: $(\alpha A) \otimes B = A \otimes (\alpha B) = \alpha (A \otimes B).$
\item Transpose:
$(A \otimes B)^T = A^T \otimes B^T.$
\end{itemize}

\begin{proposition}
If $Ax = \lambda x$ and $By = \mu y$, then $
(A \otimes B)(x \otimes y) = (\lambda \mu)(x \otimes y).$
Thus $x \otimes y$ is an eigenvector of $A \otimes B$ corresponding 
to the eigenvalue $\lambda \mu$.
\end{proposition}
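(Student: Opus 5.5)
The identity follows from the mixed-product property of the Kronecker product listed just above, together with the scalar multiplication rule. Throughout, $A \in \mathbb{R}^{m\times m}$, $B \in \mathbb{R}^{p\times p}$, $x \in \mathbb{R}^{m}$ and $y \in \mathbb{R}^{p}$. We regard $x$ and $y$ as $m\times 1$ and $p\times 1$ matrices, so that $x\otimes y$ is an $mp\times 1$ matrix. The products $Ax$ and $By$ are then defined, and the dimensions needed for the mixed-product property are compatible.

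The computation has three steps. First, the mixed-product property gives
\[
(A\otimes B)(x\otimes y) = (Ax)\otimes(By).
\]
Second, substituting the eigen-relations yields $(\lambda x)\otimes(\mu y)$. Third, applying the scalar rule twice gives
\[
(\lambda x)\otimes(\mu y) = \lambda\bigl(x\otimes(\mu y)\bigr) = \lambda\mu\,(x\otimes y).
\]
The first form of the scalar rule, $(\alpha A)\otimes B = \alpha(A\otimes B)$, pulls out $\lambda$. The second form, $A\otimes(\alpha B)=\alpha(A\otimes B)$, pulls out $\mu$.

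To conclude that $x\otimes y$ is an eigenvector, it remains to check that it is nonzero. This holds because eigenvectors are nonzero by definition. Choose indices $i$ and $k$ with $x_i\neq 0$ and $y_k\neq 0$. The entry of $x\otimes y$ in position $(i-1)p+k$ equals $x_i y_k$, which is nonzero. This nonvanishing check is the only point needing any care; the rest is a direct consequence of the listed Kronecker product properties.
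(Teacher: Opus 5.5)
Your proof is correct. The paper states this proposition without proof, right after listing the Kronecker product properties, and your argument is exactly the standard one those properties are meant to supply: $(A\otimes B)(x\otimes y)=(Ax)\otimes(By)=(\lambda x)\otimes(\mu y)=\lambda\mu\,(x\otimes y)$, plus your check that the entry $x_iy_k$ of $x\otimes y$ is nonzero, which correctly justifies calling $x\otimes y$ an eigenvector.
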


\begin{proposition}
For matrices $A, X, B$ of compatible dimensions, $
\mathrm{vec}(AXB) = (B^T \otimes A)\,\mathrm{vec}(X).$
\end{proposition}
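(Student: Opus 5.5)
We will verify the identity $\mathrm{vec}(AXB) = (B^T \otimes A)\,\mathrm{vec}(X)$ one column at a time, using the convention that $\mathrm{vec}$ stacks the columns of a matrix on top of each other. Let $A \in \mathbb{R}^{m\times n}$, $X \in \mathbb{R}^{n\times p}$ and $B \in \mathbb{R}^{p\times q}$. Write $x_1,\dots,x_p$ for the columns of $X$, so that $\mathrm{vec}(X) = (x_1^T,\dots,x_p^T)^T$, and write $b_{ij}$ for the entries of $B$.

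\textbf{Step 1: columns of $XB$.} By the definition of matrix multiplication, the $k$-th column of $XB$ is a combination of the columns of $X$:
$$(XB)_{:,k} = \sum_{j=1}^{p} b_{jk}\, x_j .$$

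\textbf{Step 2: columns of $AXB$.} Multiplying on the left by $A$ and using linearity gives
$$(AXB)_{:,k} = \sum_{j=1}^{p} b_{jk}\, A x_j .$$

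\textbf{Step 3: block rows of $(B^T\otimes A)\,\mathrm{vec}(X)$.} By the definition of the Kronecker product above, $B^T\otimes A$ is a $q\times p$ array of $m\times n$ blocks. The $(k,j)$ block is $(B^T)_{kj}A = b_{jk}A$. Multiplying by the block vector $\mathrm{vec}(X)$, whose $j$-th block is $x_j$, the $k$-th block of the product is
$$\sum_{j=1}^{p} b_{jk}\,A x_j .$$

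\textbf{Step 4: comparison.} The expression in Step 3 is exactly the $k$-th column of $AXB$ from Step 2. It is also the $k$-th block of $\mathrm{vec}(AXB)$, since $\mathrm{vec}$ places the $k$-th column in the $k$-th block. Both sides have size $mq$ and agree block by block for every $k=1,\dots,q$, so the identity holds.

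The only point that needs care is the indexing in Step 3. The transpose on $B$ is exactly what makes the $(k,j)$ block equal $b_{jk}A$, which is the coefficient appearing in Step 2. None of the properties of the Kronecker product listed earlier (mixed-product, distributivity, and so on) are needed; the definition of $A\otimes B$ suffices.
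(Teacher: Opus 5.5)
Your proof is correct and complete. Step 3 identifies the $(k,j)$ block of $B^T\otimes A$ as $(B^T)_{kj}A=b_{jk}A$, so the $k$-th block of $(B^T\otimes A)\,\mathrm{vec}(X)$ is $\sum_j b_{jk}Ax_j$. This matches the column expansion of $AXB$ in Step 2 exactly, and both sides have length $mq$.

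The paper gives no proof of this proposition. It only quotes the identity in the preliminaries as a standard Kronecker-product fact, so there is no competing argument to compare against. Your column-by-column check, working directly from the definition of $\otimes$, is the standard verification.

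You were right to fix the column-stacking convention explicitly, since the paper never defines $\mathrm{vec}$ and the stated formula depends on that choice. With row-stacking, the same computation applied to $X^T$ would give $(A\otimes B^T)$ acting on the row-stacked $X$ instead.
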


The path graph $P_n$ is a graph with vertex set 
$\{1,2,\dots,n\}$ and edge set $\{(i,i+1) : 1 \leq i < n\}.$    
So, we consider path graph $P_n$ to be path graph consisting of $n$ vertices and $n-1$ edges.

\begin{definition}\cite{harary}
Given graphs $G_1=(V_1,E_1)$ and $G_2=(V_2,E_2)$, their Cartesian product $G = G_1 \square G_2$ is defined by $
V(G) = V_1 \times V_2,$ and
\[
((u_1,v_1),(u_2,v_2)) \in E(G) \iff
\begin{cases}
u_1 = u_2 \text{ and } (v_1,v_2) \in E_2, \\
\text{or} \\
v_1 = v_2 \text{ and } (u_1,u_2) \in E_1.
\end{cases}
\]    
\end{definition}

For example, consider $P_2$ and $P_3$. Then product $ (P_2 \Box P_3)$ is given by {\scriptsize
$$\begin{tikzpicture}[scale=1, every node/.style={circle, draw,minimum size = 1mm}]
  \node (v11) at (0,0) {$(1,1)$};
  \node (v12) at (2,0) {$(1,2)$};
  \node (v13) at (4,0) {$(1,3)$};
  
  \node (v21) at (0,-2) {$(2,1)$};
  \node (v22) at (2,-2) {$(2,2)$};
  \node (v23) at (4,-2) {$(2,3)$};
  
  \draw (v11) -- (v12);
  \draw (v12) -- (v13);
  
  \draw (v21) -- (v22);
  \draw (v22) -- (v23);
  
  \draw (v11) -- (v21);
  \draw (v12) -- (v22);
   \draw (v13) -- (v23);
\end{tikzpicture}$$}
Further, consider $P_3$ and $P_4$. Then $P_3 \square P_4$ is given by {\scriptsize
\[
\begin{tikzpicture}[scale=1, every node/.style={circle, draw, minimum size=1mm}]
  \node (v11) at (0,0) {$(1,1)$};
  \node (v12) at (2,0) {$(1,2)$};
  \node (v13) at (4,0) {$(1,3)$};
  \node (v14) at (6,0) {$(1,4)$};

  \node (v21) at (0,-2) {$(2,1)$};
  \node (v22) at (2,-2) {$(2,2)$};
  \node (v23) at (4,-2) {$(2,3)$};
  \node (v24) at (6,-2) {$(2,4)$};

  \node (v31) at (0,-4) {$(3,1)$};
  \node (v32) at (2,-4) {$(3,2)$};
  \node (v33) at (4,-4) {$(3,3)$};
  \node (v34) at (6,-4) {$(3,4)$};

  \foreach \i in {1,2,3} {
    \draw (v\i1) -- (v\i2);
    \draw (v\i2) -- (v\i3);
    \draw (v\i3) -- (v\i4);
  }

  \foreach \j in {1,2,3,4} {
    \draw (v1\j) -- (v2\j);
    \draw (v2\j) -- (v3\j);
  }

\end{tikzpicture}
\]}

It is known that $G_1 \square G_2$ is connected if and only if both the graphs $G_1$ and $G_2$ are connected~\cite{imrich2000product}.

The graph $P_2 \square P_m$ is a ladder graph with $2m$ vertices.
Vertices can be indexed as: $
V = \{(1,i), (2,i) : 1 \leq i \leq m\}.$
Edges consist of: horizontal edges along each path,
 vertical edges connecting the two paths. 

Now, the laplacian of $ L(P_2 \Box P_m)$ satisfies 
$$ L(P_2 \Box P_m) = L(P_2) \otimes I_m + I_2 \otimes L(P_m) = 
  \begin{pmatrix}
     L(P_m) + I_m & -I_m  \\
     -I_m & L(P_m) + I_m  \\
 \end{pmatrix} 
$$ with the adjacency matrix $A(P_2 \Box P_m) = A(P_2) \otimes I_m + I_2 \otimes A(P_m) $
and the Laplacian of the Cartesian product $P_n \square P_m$ satisfies
$$
L(P_n \square P_m)
=
L(P_n) \otimes I_m + I_n \otimes L(P_m) =L(P_n) \oplus_K L(P_m), 
$$ where $\oplus_K $ is the Kronecker sum. 

\begin{theorem}\cite{RMerris94} \label{eigproduthm}
Let $G$ and $H$ be graphs with Laplacian eigenpairs 
$(\lambda_i, x_i)$ and $(\mu_j, y_j)$. Then the Cartesian product 
$G \square H$ has eigenvalues $\lambda_i + \mu_j$ with corresponding 
eigenvectors $z = x_i \otimes y_j,$
i.e., $
z(v,w) = x_i(v)\, y_j(w).$
\end{theorem}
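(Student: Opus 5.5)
Theorem~\ref{eigproduthm} is the classical eigenvalue theorem for Laplacians of Cartesian products. I will prove it by a direct Kronecker-product computation, using the identity already recorded above,
\[
L(G \square H) = L(G) \otimes I_m + I_n \otimes L(H),
\]
where $n=|V(G)|$ and $m=|V(H)|$. This identity holds for arbitrary $G$ and $H$, not only for paths. To justify it, index the vertices of $G\square H$ lexicographically by pairs $(v,w)$. The entry of $L(G)\otimes I_m$ at $((v,w),(v',w'))$ is $L(G)_{vv'}\delta_{ww'}$, and the entry of $I_n\otimes L(H)$ is $\delta_{vv'}L(H)_{ww'}$. Their sum has the following entries.
\begin{itemize}
\item On the diagonal it is $\deg_G(v)+\deg_H(w)$, which is exactly $\deg_{G\square H}(v,w)$.
\item At off-diagonal positions it is $-1$ precisely when $v=v'$ with $ww'\in E(H)$, or $w=w'$ with $vv'\in E(G)$.
\end{itemize}
These are exactly the adjacency rules in the definition of the Cartesian product.

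The main step uses the mixed-product property. With $L(G)x_i=\lambda_i x_i$ and $L(H)y_j=\mu_j y_j$,
\[
(L(G)\otimes I_m)(x_i\otimes y_j) = (L(G)x_i)\otimes y_j = \lambda_i\,(x_i\otimes y_j),
\qquad
(I_n\otimes L(H))(x_i\otimes y_j) = x_i\otimes (L(H)y_j) = \mu_j\,(x_i\otimes y_j).
\]
Adding the two gives $L(G\square H)(x_i\otimes y_j)=(\lambda_i+\mu_j)(x_i\otimes y_j)$. The vector $x_i\otimes y_j$ is nonzero because both factors are nonzero, so it is an eigenvector. Reading off coordinates gives $z(v,w)=x_i(v)y_j(w)$.

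To conclude that these are all the eigenvalues, counted with multiplicity, I will take $\{x_i\}$ and $\{y_j\}$ to be orthonormal eigenbases, which exist because Laplacians are real symmetric. Then $\langle x_i\otimes y_j, x_k\otimes y_l\rangle=\langle x_i,x_k\rangle\langle y_j,y_l\rangle$. Hence the $nm$ vectors $x_i\otimes y_j$ are orthonormal and form an eigenbasis of $\mathbb{R}^{nm}$, so the spectrum of $L(G\square H)$ is exactly the multiset $\{\lambda_i+\mu_j\}$.

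The only real care needed is the entrywise verification of the Kronecker-sum identity, which depends on consistent lexicographic indexing. The rest is routine.
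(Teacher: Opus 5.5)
Your proof is correct, and it is the standard argument behind this result. The paper gives no proof of its own (it only cites Merris), but it records exactly the ingredients you use: the Kronecker-sum form $L(G\square H)=L(G)\otimes I_m+I_n\otimes L(H)$ (stated there only for paths, which your entrywise check extends to general $G,H$) and the mixed-product property. It also repeats your computation $(A_G\otimes I)(x\otimes y)=(A_Gx)\otimes y$ in its later Cartesian-product theorem, and your orthonormal-basis argument that the $\lambda_i+\mu_j$ exhaust the spectrum with multiplicity is a correct addition the paper leaves implicit.
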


\section{Arithmetical Structures on $P_2 \square P_m$}
Consider $G = P_2 \square P_m$, the ladder graph with vertices $\{(1,i), (2,i): \quad i = 1, \dots, m\}.$ 

Define the Adjacency matrix $A(G)$  as
$$
A(G)_{(i,j),(p,q)} =
\begin{cases}
-1 & \text{if } (i,j) \sim (p,q),\\
0 & \text{otherwise}.
\end{cases}.
$$
We can split $r$ into row vectors $
r = \begin{pmatrix} r_1 \\ r_2 \end{pmatrix}, \quad r_1, r_2 \in \mathbb{Z}_{>0}^{m}.$

Recall that a pair $(\mathbf{d}, \mathbf{r})$ is an arithmetical structure on $G$ if:
\[
(\mathrm{diag}(\mathbf{d}) - A)\mathbf{r} = 0, \quad \mathbf{r} > 0, \quad \gcd(\mathbf{r})=1.
\]   
If we apply the definition of arithmetical structures on $P_2 \square P_m$,
we have \[
d_{1,i} \cdot r_{1,i} = \sum_{u \sim (1,i)} r_u
\]

That is, 
for the first row:
\[
d_{1,i} r_{1,i} =
\begin{cases}
r_{1,2} + r_{2,1}, & i=1,\\[2mm]
r_{1,i-1} + r_{1,i+1} + r_{2,i}, & 1 < i < m,\\[1mm]
r_{1,m-1} + r_{2,m}, & i=m.
\end{cases}
\]

For the second row:
\[
d_{2,i} r_{2,i} =
\begin{cases}
r_{2,2} + r_{1,1}, & i=1,\\[2mm]
r_{2,i-1} + r_{2,i+1} + r_{1,i}, & 1 < i < m,\\[1mm]
r_{2,m-1} + r_{1,m}, & i=m.
\end{cases}
\]
Now, consider the Laplacian of $P_2\square P_m$. By Theorem~\ref{eigproduthm}, $0 $ is an eigenvalue of $L(P_2 \square P_m)$ with corresponding eigenvector $\mathbf{1}$, the all-ones vector. Thus, we obtain an arithmetical structure given by ${\bf{r}} = {\bf 1}$ with ${\bf d} = \diag(d)$, where $d$ is the degree vector.

Note that $(P_2 \Box P_2)$ is  simply a cycle graph. Hence, the characterization of the arithmetical structures is reduced to that of  $C_4$.

However, arithmetical structures on 
$(P_2 \square P_m)$ for $m \geq 3$ exhibit a richer and more intricate behavior, requiring a more detailed analysis.

\begin{theorem}\label{ist}
Let $G = P_2 \square P_m$ be the ladder graph with vertices $(1,i), (2,i)$ for $1 \le i \le m$, and let $(\mathbf{d}, \mathbf{r}) \in \mathcal{A}(G)$ be an arithmetical structure on $G$. Suppose $\mathbf{r} \neq \mathbf{1}_{2m}$. Then one of the following holds:
\begin{enumerate}
    \item $d_{1,1} > 3$ or $d_{2,1} > 3$, and there exists a vertex $(i,j) \neq (1,1),(2,1)$ such that $d_{i,j} < 2$,
    
    \item $d_{1,1} < 3$ or $d_{2,1} < 3$, and there exists a vertex $(i,j) \neq (1,1),(2,1)$ such that $d_{i,j} > 2$,
    
    \item $d_{1,1} = 3$ and $d_{2,1} = 3$, and there exist vertices $(i,j), (k,l) \neq (1,1),(2,1)$ such that 
    $d_{i,j} < 2$ and $d_{k,l} > 2$.
\end{enumerate}
Moreover, if $d_{i,j} = 1$ for some vertex $(i,j)$, then
$d_{p,q} > 1 \quad \text{for all } (p,q) \in N_G(i,j)$, where $N_G((i,j))$ denotes the set of neighbors of vertex $(i,j)$ in $G$.
\end{theorem}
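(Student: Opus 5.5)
The plan is to compare $\mathbf{d}$ with the degree vector $\mathbf{\delta}$ of $G$, for which $\delta_{1,1}=\delta_{2,1}=\delta_{1,m}=\delta_{2,m}=2$ and $\delta_{i,j}=3$ otherwise. The ``moreover'' part is handled separately by a direct local argument.

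\textbf{Step 1: $\mathbf{d}$ must lie strictly above $\mathbf{\delta}$ somewhere and strictly below it somewhere.} Put $M=\operatorname{diag}(\mathbf{d})-A$. It is a $Z$-matrix and it is irreducible, since $G$ is connected. It also has the positive kernel vector $\mathbf{r}$. So by Theorem~\ref{Theorem 3.2 from Corrales and Valencia} it is an almost non-singular $M$-matrix with $\det M=0$. The same holds for the Laplacian $L=\operatorname{diag}(\mathbf{\delta})-A$, using $\mathbf{r}=\mathbf{1}$.
\begin{itemize}
\item Suppose $\mathbf{d}\ge\mathbf{\delta}$ entrywise with $\mathbf{d}\neq\mathbf{\delta}$. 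Take $D=\operatorname{diag}(\mathbf{d}-\mathbf{\delta})$, which is nonzero and nonnegative. Theorem~\ref{Theorem 2.6 from Corrales and Valencia}(2) applied to $L$ makes $M=L+D$ a non-singular $M$-matrix, contradicting $\det M=0$.
\item Suppose $\mathbf{d}\le\mathbf{\delta}$ with $\mathbf{d}\ne\mathbf{\delta}$. Swap the roles of $L$ and $M$ and reach the same contradiction.
\item If $\mathbf{d}=\mathbf{\delta}$, then $\ker L$ is spanned by $\mathbf{1}$ (it is one-dimensional because $L$ is irreducible and almost non-singular). Primitivity then forces $\mathbf{r}=\mathbf{1}$, which is excluded.
\end{itemize}
Hence there are vertices $u,w$ with $d_u>\delta_u$ and $d_w<\delta_w$. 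The same conclusion follows from summing the defining equations, which gives $\sum_v (d_v-\delta_v)r_v=0$.

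\textbf{Step 2: sort into the three cases of the statement.} The split is by the values $d_{1,1},d_{2,1}$ relative to $3$.
\begin{itemize}
\item In case 2, some first-column entry is at most $2=\delta$. I would either locate the ``excess'' vertex $u$ outside the first column, or show via the equations at $(1,1),(2,1)$ that a first-column entry below $3$ forces, through $r_{1,2}$ or $r_{2,2}$, some later vertex with $d>2$.
\item In cases 1 and 3, I need a deficient vertex with $d<2$, i.e.\ $d=1$, not merely $d<\delta$.
\end{itemize}
For this sharpening, the first thing I would try is to take $v$ with $r_v$ maximal among the vertices outside the first column. If $d_v\ge 2$ there for every such $v$, I would propagate the inequalities $d_vr_v=\sum_{u\sim v}r_u$ along the ladder, column by column, and aim for a contradiction with the excess $d_{1,1}>3$ or $d_{2,1}>3$ combined with $\sum_v(d_v-\delta_v)r_v=0$.

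\textbf{Main obstacle.} I expect this sharpening from ``$d_w<\delta_w$'' to ``$d_w<2$'' to be the hard step, because interior vertices have $\delta=3$ and an entry $d_w=2$ is not excluded by Step 1 alone. If the propagation argument fails, I would read the thresholds in cases 1--3 as comparisons with the degree vector $\mathbf{\delta}$ (corner degree $2$, interior degree $3$). Under that reading, Step 1 already gives the trichotomy directly.

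\textbf{The ``moreover'' part.} Suppose $d_v=1$ and some neighbour $u$ also has $d_u=1$. Every vertex of $G$ has degree at least $2$, so $v$ has a neighbour other than $u$, and since all entries of $\mathbf{r}$ are positive, $r_v=\sum_{x\sim v}r_x>r_u$. By the same reasoning at $u$, $r_u=\sum_{x\sim u}r_x>r_v$. These two inequalities contradict each other, so every neighbour of $v$ has $d>1$.
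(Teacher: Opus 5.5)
Your Step~1 is correct, and more careful than the paper's min/max heuristic. Comparing $\operatorname{diag}(\mathbf d)-A$ with the Laplacian via Theorem~\ref{Theorem 2.6 from Corrales and Valencia}, or simply using $\sum_v(d_v-\delta_v)r_v=0$, does force one vertex with $d_v>\delta_v$ and one with $d_v<\delta_v$. Your argument for the ``moreover'' clause is also correct and is the same local contradiction the paper uses.

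The genuine gap is Step~2, which remains a plan. The sharpening you flag as the main obstacle, producing a vertex outside the first column with $d<2$, cannot be carried out, because cases 1--3 are false as stated. Apply the stacked construction (Theorem~\ref{stac}) to $\mathbf r=(1,2,1)$, $\mathbf d=(2,1,2)$ on $P_3$. On $P_2\square P_3$ set $r_{1,j}=r_{2,j}$ equal to $1,2,1$ for $j=1,2,3$. This gives $d_{1,j}=d_{2,j}$ equal to $3,2,3$: at $(1,1)$, $2+1=3\cdot 1$; at $(1,2)$, $1+1+2=2\cdot 2$. Here $\mathbf r\neq\mathbf 1$ and $d_{1,1}=d_{2,1}=3$, so only case~3 could apply, yet no vertex has $d<2$.

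Similarly, $\mathbf r=(1,3,2,1)$ on $P_4$ stacks to rows with $\mathbf d=(4,2,3,3)$. Then $d_{1,1}=d_{2,1}=4>3$ while every $d_{i,j}\ge 2$, so case~1 fails and cases 2 and 3 do not apply. In general every stacked structure has $\mathbf d=\mathbf d_{P_m}+\mathbf 1\ge 2$ everywhere. So each one whose first-column value is at least $3$ is a counterexample. The paper's own example on $P_2\square P_4$, with $\mathbf r=(1,2,1,1,1,2,1,1)$ and $\mathbf d=(3,2,4,2,3,2,4,2)$, is one. No propagation argument can create a vertex with $d=1$ there. The paper's proof has exactly your hole: after the analogue of your Step~1 it just asserts that ``focusing on $(1,1),(2,1)$'' gives the three cases.

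Your fallback of reading the thresholds as the degree vector $\delta$ proves a different statement. Even that version is not ``given directly'' by Step~1, which says nothing about \emph{where} the excess and deficiency occur; both can sit in the first column. On $P_2\square P_2$, take $(r_{1,1},r_{1,2},r_{2,2},r_{2,1})=(4,3,2,1)$. This gives $d_{1,1}=1$, $d_{2,1}=6$ and $d_{1,2}=d_{2,2}=2$, so no vertex outside the first column deviates from its degree. It is also a counterexample to the statement as written, since neither case~1 nor case~2 finds a compensating vertex. For $m\ge 3$ this all-in-the-first-column configuration can be excluded. That, however, needs an additional computation on the columns where $\mathbf d=\delta$, which your plan does not contain.
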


\begin{proof}
Let $G = P_2 \square P_m$ be the ladder graph with vertices $(1,i),(2,i)$ for $1 \le i \le m$, and let $(\mathbf{d}, \mathbf{r}) \in \mathcal{A}(G)$. Then
\[
(\operatorname{diag}(\mathbf{d}) - A)\mathbf{r} = 0,
\]
where $\mathbf{r}$ is a positive integer vector. Thus, for every vertex $(i,j)$,
\[
d_{i,j} \, r_{i,j} = \sum_{(p,q)\sim(i,j)} r_{p,q}.
\]
Equivalently,
\[
d_{i,j} = \frac{\sum_{(p,q)\sim(i,j)} r_{p,q}}{r_{i,j}}.
\]

If $\mathbf{r} = \mathbf{1}_{2m}$, then $d_{i,j} = \deg_G(i,j)$ for all vertices, and the structure is trivial. Hence assume $\mathbf{r} \neq \mathbf{1}_{2m}$.

Let $(i_0,j_0)$ be a vertex where $r_{i_0,j_0}$ is minimal, and $(k_0,l_0)$ be a vertex where $r_{k_0,l_0}$ is maximal.

Since $r_{i_0,j_0} \le r_{p,q}$ for all neighbors $(p,q)$ of $(i_0,j_0)$, we have
\[
d_{i_0,j_0} = \frac{\sum r_{p,q}}{r_{i_0,j_0}} \ge \deg_G(i_0,j_0),
\]
with strict inequality unless all neighboring values are equal.

Similarly, since $r_{k_0,l_0} \ge r_{p,q}$ for all neighbors $(p,q)$ of $(k_0,l_0)$,
\[
d_{k_0,l_0} = \frac{\sum r_{p,q}}{r_{k_0,l_0}} \le \deg_G(k_0,l_0),
\]
with strict inequality unless all neighboring values are equal.

Since $\mathbf{r}$ is not constant, at least one of these inequalities is strict. Therefore, by using Theorem~\ref{Theorem 2.6 from Corrales and Valencia} and Theorem~\ref{Theorem 3.2 from Corrales and Valencia} there exist vertices where $d_{i,j} > \deg_G(i,j)$ and $d_{i,j} < \deg_G(i,j)$.

In the ladder graph, corner vertices have degree $2$ and all other vertices have degree $3$. Hence deviations of $\mathbf{d}$ from the degree vector occur across the graph.

Focusing on the vertices $(1,1)$ and $(2,1)$, we obtain the three cases:
\begin{enumerate}
    \item If $d_{1,1} > 3$ or $d_{2,1} > 3$, then there exists $(i,j) \neq (1,1),(2,1)$ such that $d_{i,j} < 2$,
    
    \item If $d_{1,1} < 3$ or $d_{2,1} < 3$, then there exists $(i,j) \neq (1,1),(2,1)$ such that $d_{i,j} > 2$,
    
    \item If $d_{1,1} = 3$ and $d_{2,1} = 3$, then there exist $(i,j), (k,l) \neq (1,1),(2,1)$ such that 
    $d_{i,j} < 2$ and $d_{k,l} > 2$.
\end{enumerate}

Finally, suppose $d_{i,j} = 1$ for some vertex $(i,j)$. Then
\[
r_{i,j} = \sum_{(p,q)\sim(i,j)} r_{p,q}.
\]
Assume that there exists a neighbor $(p,q)$ such that $d_{p,q} = 1$. Then
\[
r_{p,q} = \sum_{(u,v)\sim(p,q)} r_{u,v}.
\]
Since $(i,j) \sim (p,q)$, we have
\[
r_{p,q} \ge r_{i,j} + (\text{other positive terms}) > r_{i,j}.
\]
Substituting into the first equation,
\[
r_{i,j} = r_{p,q} + (\text{other positive terms}) > r_{p,q},
\]
which is a contradiction. Hence $d_{p,q} > 1$ for all neighbors $(p,q)$ of $(i,j)$. 

The arguments for corner vertices $(1,1),(1,m),(2,1),(2,m)$ and interior vertices $(1,i),(2,i)$ for $1 < i < m$ 
follow similarly by considering the appropriate Laplacian equations for each type of vertex. That is,  consider a corner vertex, say $(1,1)$, with
\[
d_{1,1} r_{1,1} = r_{1,2} + r_{2,1}.
\]
If $d_{1,1} = 1$ and one of its neighbors also has $d=1$, say $(1,2)$ or $(2,1)$, then the corresponding Laplacian equation gives
\[
r_{1,1} = r_{1,2} + r_{2,1} = r_{1,1} + r_{x,y} > r_{1,1},
\]
where $(x,y)$ denotes the neighbor with $d_{x,y}=1$. This is a contradiction since all $r_i > 0$.
This completes the proof.
\end{proof}

\begin{proposition}
Let $G$ be a graph with $n$ vertices, and let $(\mathbf{d}, \mathbf{r})$ be an arithmetical structure on $G$ with respect to a fixed ordering of the vertex set. Let $P$ be any $n \times n$ permutation matrix corresponding to a relabeling of the vertices. Then $(\mathbf{d}', \mathbf{r}')$ defined by
\[
\mathbf{d}' = P\mathbf{d}, \quad \mathbf{r}' = P\mathbf{r}
\]
is also an arithmetical structure on $G$ (with respect to the new ordering). 

In particular, different orderings of the vertex set yield permutation-equivalent representations of the same arithmetical structure.
\end{proposition}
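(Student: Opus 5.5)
The plan is to verify the defining conditions of an arithmetical structure directly for the permuted pair $(\mathbf{d}',\mathbf{r}')$. Let $\sigma$ be the relabeling encoded by $P$, so that $(P\mathbf{x})_i = x_{\sigma^{-1}(i)}$. Under this relabeling the adjacency matrix of $G$ in the new ordering is $A' = PAP^{t}$: vertex $i$ in the new labeling is adjacent to vertex $j$ exactly when $\sigma^{-1}(i)$ and $\sigma^{-1}(j)$ are adjacent in the old one. This is the one point needing care, namely that "the same graph with a new ordering" means conjugating $A$ by $P$, not leaving $A$ fixed. I would state it explicitly and check it entrywise: $(PAP^t)_{ij} = a_{\sigma^{-1}(i)\sigma^{-1}(j)}$.

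Next I would record that diagonal matrices transform compatibly: $\operatorname{diag}(P\mathbf{d}) = P\,\operatorname{diag}(\mathbf{d})\,P^{t}$. Again this is immediate entrywise, since both sides have $(i,i)$ entry $d_{\sigma^{-1}(i)}$ and vanishing off-diagonal entries.

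Combining these with $P^tP = I$, the defining equation follows:
\[
(\operatorname{diag}(\mathbf{d}') - A')\mathbf{r}' = P(\operatorname{diag}(\mathbf{d}) - A)P^{t}P\mathbf{r} = P(\operatorname{diag}(\mathbf{d}) - A)\mathbf{r} = 0 .
\]
The remaining conditions are invariant because $P$ merely reorders entries. Thus $\mathbf{d}'$ and $\mathbf{r}'$ have positive integer entries, and $\gcd(\mathbf{r}') = \gcd(\mathbf{r}) = 1$, so $\mathbf{r}'$ is primitive. This shows $(\mathbf{d}',\mathbf{r}') \in \mathcal{A}(G)$ with respect to the new ordering.

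For the ``in particular'' clause, I would note that the correspondence $(\mathbf{d},\mathbf{r}) \mapsto (P\mathbf{d},P\mathbf{r})$ is a bijection between the structures in the two orderings, with inverse given by $P^{t}$. Each vertex keeps its pair $(d_v, r_v)$, since only its index changes. Hence the two vectors represent the same assignment of values to vertices, i.e.\ the same arithmetical structure up to permutation equivalence. No step is a genuine obstacle; the only subtlety is the conjugation $A' = PAP^t$ noted above.
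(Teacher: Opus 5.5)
Your proposal is correct and follows essentially the same route as the paper: write $A' = PAP^{t}$ and $\operatorname{diag}(P\mathbf{d}) = P\,\operatorname{diag}(\mathbf{d})\,P^{t}$, then use $P^{t}P = I$ to reduce $(\operatorname{diag}(\mathbf{d}') - A')\mathbf{r}'$ to $P(\operatorname{diag}(\mathbf{d}) - A)\mathbf{r} = 0$. You go slightly further than the paper by checking these identities entrywise and verifying explicitly that positivity and $\gcd(\mathbf{r}') = 1$ are preserved, which the paper leaves implicit.
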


\begin{proof}
Let $A$ be the adjacency matrix of $G$ with respect to the original ordering. Then $(\mathbf{d}, \mathbf{r})$ satisfies
$(\operatorname{diag}(\mathbf{d}) - A)\mathbf{r} = 0.$
Let $P$ be a permutation matrix corresponding to a relabeling of the vertices, and define
\[
\mathbf{d}' = P\mathbf{d}, \quad \mathbf{r}' = P\mathbf{r}.
\]
Under this relabeling, the adjacency matrix transforms as
$A' = P A P^{T}.$
Similarly,
\[
\operatorname{diag}(\mathbf{d}') = P \operatorname{diag}(\mathbf{d}) P^{T}.
\]
Now consider $
(\operatorname{diag}(\mathbf{d}') - A')\mathbf{r}'.$
Substituting,
\[
(P \operatorname{diag}(\mathbf{d}) P^{T} - P A P^{T})(P\mathbf{r})
= P(\operatorname{diag}(\mathbf{d}) - A)\mathbf{r}.
\]
Since $(\operatorname{diag}(\mathbf{d}) - A)\mathbf{r} = 0$, it follows that $
(\operatorname{diag}(\mathbf{d}') - A')\mathbf{r}' = 0.$
Thus $(\mathbf{d}', \mathbf{r}')$ is an arithmetical structure on $G$ with respect to the new ordering.

The final statement follows immediately.
\end{proof}

\begin{proposition}
Let $P_2 \square P_m$ be the ladder graph with $2m$ vertices. 
Let $r = \mathbf{1}_{2m} = (1,1,\dots,1) \in \mathbb{Z}_{>0}^{2m}.$
\begin{enumerate}
    \item For vertices labeled as row wise as
$(1,1),(1, 2)\dots,(1,m),(2,1),(2,2)\dots,(2,m)$, we have  $
d = (\underbrace{2,3,\dots,3,2}_{\text{row 1}}, \underbrace{2,3,\dots,3,2}_{\text{row 2}}). $
\item For vertices labeled as column wise as $(1, 1), (2, 1), (1, 2), (2, 2), \ldots, (1, m), (2, m)$ we have $
d = (2, 2, 3, 3, \ldots, 3, 3, 2, 2),$ where the first two and
last two entries correspond to the corner vertices, and all interior vertices have degree $3.$ 
\end{enumerate}
Then $(d,r)$ is an arithmetical structure on $P_2 \square P_m$. Thus, $\mathbf{d}$ is uniquely determined as a function on the vertices of $G$, independent of the chosen ordering. Different vertex orderings produce vectors that are permutation-equivalent representations of the same degree function.
\end{proposition}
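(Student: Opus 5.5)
We plan to verify the defining identity $(\operatorname{diag}(\mathbf{d}) - A)\mathbf{r} = 0$ directly for $\mathbf{r} = \mathbf{1}_{2m}$. We then deduce the ordering-independence from the preceding proposition on permutation matrices.

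First we compute $A\mathbf{1}_{2m}$. For any graph, the entry of $A\mathbf{1}$ at a vertex $v$ is the number of neighbours of $v$, namely $\deg_G(v)$. Hence $(\operatorname{diag}(\mathbf{d}) - A)\mathbf{1} = 0$ holds exactly when $\mathbf{d}$ is the degree vector. Equivalently, we may invoke $L(P_2\square P_m)\mathbf{1}=0$, which follows from the Kronecker sum formula $L(P_2)\otimes I_m + I_2\otimes L(P_m)$ together with $L(P_k)\mathbf{1}=0$ and Theorem~\ref{eigproduthm}. The vector $\mathbf{1}$ is positive and primitive, since $\gcd = 1$. So the remaining work is to read off the degrees from the explicit vertex equations listed at the start of this section.

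The degree count goes as follows. The vertex $(i,j)$ has neighbours $(i,j\pm1)$, whenever they exist, together with $(3-i,j)$. Therefore the corners $(1,1),(2,1),(1,m),(2,m)$ have degree $2$, and every vertex with $1<j<m$ has degree $3$; we assume $m\ge 2$. Listing these degrees in row-wise order gives $(2,3,\dots,3,2,\,2,3,\dots,3,2)$. Listing them in column-wise order gives $(2,2,3,3,\dots,3,3,2,2)$.

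Finally, the column-wise ordering is obtained from the row-wise one by a permutation matrix $P$. The previous proposition then shows that $(P\mathbf{d},P\mathbf{1}) = (P\mathbf{d},\mathbf{1})$ is again an arithmetical structure with respect to the relabelled adjacency matrix $PAP^T$. As a function on vertices, $\mathbf{d}$ is simply $v\mapsto \deg_G(v)$, so it does not depend on the ordering. None of these steps is a real obstacle. The only care needed is the bookkeeping of the two orderings, and the edge case $m=2$, where all vertices are corners and both vectors reduce to $(2,2,2,2)$, consistent with $C_4$.
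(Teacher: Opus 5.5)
Your proof is correct and rests on the same fact the paper uses. The paper's one-line proof cites Theorem~\ref{ist}, whose statement actually concerns $\mathbf{r}\neq\mathbf{1}_{2m}$; the operative content is only the remark in that proof (and the earlier Laplacian observation via Theorem~\ref{eigproduthm}) that $\mathbf{r}=\mathbf{1}_{2m}$ forces $d_{i,j}=\deg_G(i,j)$, and your direct computation of $A\mathbf{1}$, explicit degree count in both orderings, and appeal to the permutation proposition make this self-contained where the paper leaves it implicit.
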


\begin{proof}
The proof follows as the consequence of Theorem~\ref{ist}.
\end{proof}

\begin{theorem}\label{adjmminustom}
Let $m \ge 2$. Then the adjacency matrix of the ladder graph$ (P_2 \square P_m)$ can be written in block form as
\[
\tilde{A} = A(P_2 \square P_m)=
\begin{pmatrix}
A(P_2 \square P_{m-1}) & B_{m-1} \\
B_{m-1}^T & A(P_2)
\end{pmatrix},
\]
where $A(P_2)=\begin{pmatrix}0&1\\1&0\end{pmatrix}$, and $B_{m-1}\in \mathbb{R}^{2(m-1)\times 2}$ is given by
$
B_{m-1}=
\begin{pmatrix}
0\\
\vdots\\
0\\
I_2
\end{pmatrix},$
i.e., all entries are zero except the last two rows, which form the identity matrix $(I_2)$.
\end{theorem}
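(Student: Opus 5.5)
The plan is to fix a vertex ordering compatible with the block decomposition and then check adjacencies block by block. Concretely, use the column-wise ordering $(1,1),(2,1),(1,2),(2,2),\dots,(1,m),(2,m)$, so that the first $2(m-1)$ indices are exactly the vertices of the copy of $P_2 \square P_{m-1}$ spanned by columns $1,\dots,m-1$. The last two indices are the column $\{(1,m),(2,m)\}$. By the permutation proposition above, any other labeling differs from this one only by conjugation with a permutation matrix. So it suffices to establish the identity in this ordering; indeed, the block form in the statement only makes sense for an ordering of this type. Equivalently, one can write $A(P_2\square P_m)=A(P_m)\otimes I_2 + I_m\otimes A(P_2)$, which is the Kronecker description from the preliminaries with the factors swapped to match the column-wise ordering, and then partition $A(P_m)$ as
\[
A(P_m)=\begin{pmatrix} A(P_{m-1}) & e_{m-1} \\ e_{m-1}^T & 0\end{pmatrix},
\]
where $e_{m-1}\in\mathbb{R}^{m-1}$ is the last standard basis vector.

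Next, I will compute each block using distributivity of $\otimes$ over block partitions. The upper-left block is $A(P_{m-1})\otimes I_2 + I_{m-1}\otimes A(P_2)=A(P_2\square P_{m-1})$; this says that two vertices in the first $m-1$ columns are adjacent in $P_2\square P_m$ exactly when they are adjacent in the subladder. The lower-right block is $0\otimes I_2 + 1\cdot A(P_2)=A(P_2)$, which records the single rung $(1,m)\sim(2,m)$. The off-diagonal block is $e_{m-1}\otimes I_2$, because the $I_m\otimes A(P_2)$ term is block diagonal and contributes nothing off the diagonal blocks. This block is exactly $B_{m-1}$: zero except that its last two rows form $I_2$. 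Combinatorially, this means $(i,m)$ is adjacent to $(k,j)$ with $j\le m-1$ iff $j=m-1$ and $k=i$. The lower-left block is the transpose $B_{m-1}^T$, by symmetry of $A$ (or by the transpose rule for $\otimes$).

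There is no real obstacle here. The only point needing care is the ordering convention. The preliminaries write $A(P_2)\otimes I_m + I_2\otimes A(P_m)$, which corresponds to the row-wise ordering; in that ordering the last column is not a contiguous final block. So I must explicitly switch to the column-wise ordering, by swapping the Kronecker factors (which is a permutation similarity) or by invoking the permutation proposition. I will also note that for $m=2$ the statement reduces to $A(P_2\square P_1)=A(P_2)$ with $B_1=I_2$, which gives the adjacency matrix of $C_4$, consistent with the earlier remark.
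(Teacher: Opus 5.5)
Your proposal is correct and follows essentially the same route as the paper: fix the column-wise ordering $(1,1),(2,1),\dots,(1,m),(2,m)$, then identify the three blocks as the subladder adjacencies, the final rung $(1,m)\sim(2,m)$, and the two edges $(i,m-1)\sim(i,m)$, which give $B_{m-1}$. Your computation via $A(P_m)\otimes I_2 + I_m\otimes A(P_2)$ with $A(P_m)$ partitioned is an algebraic version of that same block-by-block check, and your remarks on the $m=2$ case and on the row-wise ordering in the preliminaries are accurate.
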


\begin{proof}
Order the vertices of $P_2 \square P_m$ column-wise as
\[
(1,1),(2,1),(1,2),(2,2),\dots,(1,m),(2,m).
\]
With this ordering, the first $2(m-1)$ vertices correspond to $P_2 \square P_{m-1}$, and the last two vertices correspond to the new column $(1,m),(2,m)$. The adjacency relations are as follows:
\begin{itemize}
\item Edges within the first (m-1) columns give the block $A(P_2 \square P_{m-1}).$
\item The vertices $(1,m)$ and $(2,m) $ are adjacent, giving the block $A(P_2)$.
\item The only edges between the old and new vertices are $
(1,m-1)\sim (1,m), \quad (2,m-1)\sim (2,m).$
\end{itemize}
These two edges are represented by the matrix $(B_{m-1}),$ which has zeros everywhere except in the last two rows, corresponding to vertices $(1,m-1)$ and $(2,m-1)$, where it equals $I_2.$
Thus, the adjacency matrix takes the stated block form.
\end{proof}

\begin{observation}
Note here that if we know arithmetical structures on $(P_2 \square P_{m-1})$ and $P_2$  , then we calculate arithmetical structures on $(P_2 \square P_{m})$ with respect to some adjustment of matrix $B_{m-1}$. \end{observation}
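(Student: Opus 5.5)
The plan is to make the Observation precise through the block decomposition of Theorem~\ref{adjmminustom}, and then read off the two directions: restriction from $P_2\square P_m$ and extension from $P_2\square P_{m-1}$. Use the column-wise ordering $(1,1),(2,1),\dots,(1,m),(2,m)$ and write an arithmetical structure $(\mathbf{d},\mathbf{r})$ on $P_2\square P_m$ as
\[
\mathbf{r}=\begin{pmatrix}\mathbf{r}'\\ \mathbf{s}\end{pmatrix},\qquad
\mathbf{d}=\begin{pmatrix}\mathbf{d}'\\ \mathbf{e}\end{pmatrix},
\]
with $\mathbf{r}',\mathbf{d}'\in\mathbb{Z}_{>0}^{2(m-1)}$ and $\mathbf{s}=(r_{1,m},r_{2,m})^T$, $\mathbf{e}=(d_{1,m},d_{2,m})^T$. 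Substituting the block form of $\tilde A$ into $(\diag(\mathbf{d})-\tilde A)\mathbf{r}=0$ splits the system into two coupled equations:
\[
(\diag(\mathbf{d}')-A(P_2\square P_{m-1}))\mathbf{r}'=B_{m-1}\mathbf{s},\qquad
(\diag(\mathbf{e})-A(P_2))\mathbf{s}=B_{m-1}^T\mathbf{r}'.
\]
Because $B_{m-1}$ is supported only on the last two rows, the first equation differs from an arithmetical equation on $P_2\square P_{m-1}$ only at the vertices $(1,m-1)$ and $(2,m-1)$.

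\emph{Restriction.} Define the adjusted vector $\hat{\mathbf{d}}'$ by keeping $\mathbf{d}'$ everywhere except
\[
\hat d_{i,m-1}=d_{i,m-1}-\frac{r_{i,m}}{r_{i,m-1}}\quad (i=1,2).
\]
Then $(\diag(\hat{\mathbf{d}}')-A(P_2\square P_{m-1}))\mathbf{r}'=0$ with $\mathbf{r}'>0$. By Theorem~\ref{Theorem 3.2 from Corrales and Valencia}, $\diag(\hat{\mathbf{d}}')-A(P_2\square P_{m-1})$ is then an almost non-singular $M$-matrix of determinant zero. This is the ``adjustment of $B_{m-1}$'': the coupling term $B_{m-1}\mathbf{s}$ is absorbed into the diagonal. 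Whenever $r_{i,m-1}\mid r_{i,m}$ for $i=1,2$, the pair $(\hat{\mathbf{d}}',\mathbf{r}'/\gcd(\mathbf{r}'))$ is an honest arithmetical structure on $P_2\square P_{m-1}$. The second equation says that $(\mathbf{e},\mathbf{s})$ is an arithmetical-type structure on $P_2$ with boundary data $(r_{1,m-1},r_{2,m-1})$, namely $d_{i,m}r_{i,m}=r_{3-i,m}+r_{i,m-1}$.

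\emph{Extension.} Conversely, start from an arithmetical structure $(\hat{\mathbf{d}}',\mathbf{r}')$ on $P_2\square P_{m-1}$ and choose positive integers $s_1,s_2$. Set
\[
d_{i,m-1}=\hat d_{i,m-1}+\frac{s_i}{r_{i,m-1}},\qquad d_{i,m}=\frac{s_{3-i}+r_{i,m-1}}{s_i}.
\]
This produces a solution of both block equations, and it is an arithmetical structure on $P_2\square P_m$ exactly when these four quantities are integers; one then divides $\mathbf{r}$ by its gcd. A convenient sufficient family is $s_i=k\,r_{i,m-1}$ for $i=1,2$, after first scaling $\mathbf{r}'$ if necessary; the conditions then reduce to divisibility conditions on $P_2$.

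The main obstacle is the integrality of the adjusted entries. In general $r_{i,m}/r_{i,m-1}$ need not be an integer, so the restriction need not land in $\mathcal{A}(P_2\square P_{m-1})$; it only lands in the larger set of positive real diagonals $D$ for which $D-A$ is a singular almost non-singular $M$-matrix. I would first handle this by stating the Observation at that level, using Theorem~\ref{Theorem 2.6 from Corrales and Valencia} to guarantee that such rational diagonals still give a unique positive kernel vector. The integral statement I would then state as the divisibility criterion above, and verify it directly on the small case $m=3$ as a sanity check.
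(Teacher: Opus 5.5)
Your proposal is correct and follows the paper's route (the block form of Theorem~\ref{adjmminustom}, then appending a last column $(s_1,s_2)$ and correcting the diagonal at columns $m-1$ and $m$, as the subsequent theorems do with $(1,1)$, $(r_{i,j},1)$, $(1,r_{i,j})$ or $(y_1,y_2)$), and your integrality analysis is sharper than the paper's: the condition $r_{i,m-1}\mid s_i$ and the failure of restriction are exactly what the paper's recursive-construction theorem overlooks, e.g.\ $\mathbf r=(1,1,2,2,1,1)$ column-wise on $P_2\square P_3$, whose first two columns give $3/2$ at $(1,2)$. One small fix: the ``convenient family'' $s_i=k\,r_{i,m-1}$ is not sufficient, since only $s_i/r_{i,m-1}$ matters (so rescaling $\mathbf r'$ changes nothing), and the conditions $k\,r_{i,m-1}\mid k\,r_{3-i,m-1}+r_{i,m-1}$ force $k\mid r_{i,m-1}$ and reproduce themselves after dividing out $k$, hence $k=1$ and $r_{1,m-1}=r_{2,m-1}$; drop or restate that remark.
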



The next theorem calculates arithmetical structures on $P_2 \square P_{m}$ from $P_2 \square P_{m-1}$.

\begin{theorem}
Let $ P_2 \square P_{m-1}$ be the ladder graph with column-wise ordering \\ $
(1,1), (2,1), (1,2), (2,2), \dots, (1,m-1), (2,m-1).$ Let $A(P_2 \square P_{m-1})$  be the adjacency matrix of ladder graph $(P_2 \square P_{m-1}).$ Let $(\bf{d,r})$  be an arithmetical structure on $P_2 \square P_{m-1}$. Assume that $r_{1, m-1}|1+r_{2, m-1}+ r_{1, m-2}$ and $r_{2, m-1}|1+r_{1, m-1}+ r_{2, m-2}$. Then  $({\bf\tilde{d}, \tilde{r}})$  given by 
$${\bf\tilde{d}} = \begin{pmatrix}
   d_{11} \\
   d_{21}\\
   d_{12}\\
   d_{22} \\
   \vdots\\
   d_{1,m-2}\\
   d_{2,m-2} \\
   1+r_{2, m-1}+ r_{1, m-2}/r_{1, m-1}\\
     1+r_{1, m-1}+ r_{2, m-2}/r_{2, m-1}\\
     1+r_{1, m-1}\\
     1+r_{2, m-1}
\end{pmatrix}, \,\,\,\,  
{\bf\tilde{r}} = \begin{pmatrix}
   r_{11} \\
   r_{21}\\
   r_{12}\\
   r_{22} \\
   \vdots\\
   r_{1,m-2}\\
   r_{2,m-2} \\
    r_{1, m-1}\\
      r_{2, m-1}\\
     1\\
     1
\end{pmatrix} $$
is an arithmetical structure on $ P_2 \square P_m$. 
\end{theorem}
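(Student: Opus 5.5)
The plan is a direct vertex-by-vertex verification of $(\operatorname{diag}(\tilde{\mathbf d})-\tilde A)\tilde{\mathbf r}=0$. I will use the block decomposition of Theorem~\ref{adjmminustom} with the column-wise ordering, so $\tilde A$ consists of $A(P_2\square P_{m-1})$, the coupling block $B_{m-1}$ and $A(P_2)$. I read the two modified entries of $\tilde{\mathbf d}$ as $(1+r_{2,m-1}+r_{1,m-2})/r_{1,m-1}$ and $(1+r_{1,m-1}+r_{2,m-2})/r_{2,m-1}$. The hypotheses say exactly that these are positive integers, so $\tilde{\mathbf d}$ is a positive integer vector. $\tilde{\mathbf r}$ is positive because $\mathbf r$ is, and it is primitive because it contains the entry $1$. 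It therefore remains only to check the defining equation $\tilde d_v\tilde r_v=\sum_{u\sim v}\tilde r_u$ at every vertex $v$ of $P_2\square P_m$.

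I would split the vertices into three groups.
\begin{enumerate}
\item The vertices $(i,j)$ with $j\le m-2$. Their neighbourhoods in $P_2\square P_m$ coincide with those in $P_2\square P_{m-1}$, because $B_{m-1}$ is zero outside the last two rows. Their $r$- and $d$-values are also unchanged, so the equations are inherited from $(\mathbf d,\mathbf r)$.
\item The old last column $(1,m-1),(2,m-1)$. By $B_{m-1}$, each gains exactly one new neighbour, namely $(1,m)$ and $(2,m)$ respectively, and each new neighbour carries $\tilde r=1$. Hence
\[
\tilde d_{1,m-1}r_{1,m-1}=r_{1,m-2}+r_{2,m-1}+1,
\]
which is precisely the defining formula for $\tilde d_{1,m-1}$. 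The equation at $(2,m-1)$ is symmetric.
\item The new column $(1,m),(2,m)$. The neighbours of $(1,m)$ are $(1,m-1)$ and $(2,m)$, so $\tilde d_{1,m}\cdot 1=r_{1,m-1}+1$, matching the stated entry. The equation at $(2,m)$ is symmetric.
\end{enumerate}

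The only point needing care is the degenerate case $m-1=1$, where there is no column $m-2$. There the terms $r_{1,m-2},r_{2,m-2}$ are simply absent, so they should be read as $0$, and the same verification goes through.

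It is also worth remarking on what the hypothesis forces. The original structure already gives $r_{1,m-1}\mid r_{1,m-2}+r_{2,m-1}$, so the assumed divisibility implies $r_{1,m-1}\mid 1$, that is $r_{1,m-1}=1$, and likewise $r_{2,m-1}=1$. This does not affect the proof, but it clarifies the scope of the construction. The main obstacle is therefore only bookkeeping, namely correctly identifying which adjacencies change under the block form, and this is handled by the explicit shape of $B_{m-1}$.
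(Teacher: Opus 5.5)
Your argument is correct and is the same direct check of $(\operatorname{diag}(\tilde{\mathbf d})-\tilde A)\tilde{\mathbf r}=0$ via the block form of Theorem~\ref{adjmminustom} that the paper gives in a single line; you simply carry out the vertex-by-vertex verification the paper leaves implicit. Your side remark is also right: combined with the corner equation $d_{1,m-1}r_{1,m-1}=r_{1,m-2}+r_{2,m-1}$ of the original structure (and its analogue at $(2,m-1)$), the hypotheses force $r_{1,m-1}=r_{2,m-1}=1$, so the construction applies only in that case, and then the ambiguous placement of the fraction bars in $\tilde{\mathbf d}$ does not matter.
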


\begin{proof}
 Consider the adjacency relation given in Theorem~\ref{adjmminustom}. Then we have $({\diag({\bf\tilde{d}}})- \tilde{A}){\bf\tilde{r}} = 0.$ 
\end{proof}

\begin{theorem}
Let $G = P_2 \square P_{m-1}$ be the ladder graph with column-wise ordering \\ $
(1,1), (2,1), (1,2), (2,2), \dots, (1,m-1), (2,m-1),$
and let $(\mathbf d, \mathbf r)$ be an arithmetical structure on $G$.
Fix $i \in \{1,2\}$ and $1 \le j \le m-1$. Assume that $r_{1,m-1}|r_{1,m-2} + r_{2,m-1} + r_{i,j},  \,\, r_{2,m-1}|r_{2,m-2} + r_{1,m-1} + 1$ $r_{i,j}|r_{1,m-1} + 1$. Define a vector $\tilde{\mathbf r}$ on $P_2 \square P_m$ by
\[
\tilde{\mathbf r} =
(r_{1,1}, r_{2,1}, r_{1,2}, r_{2,2}, \dots, r_{1,m-1}, r_{2,m-1}, r_{i,j}, 1).
\]
Define $\tilde{\mathbf d}$ componentwise as follows:
\begin{enumerate}
    \item For $1 \le k \le m-2$, $
    \tilde d_{1,k} = d_{1,k}, \quad \tilde d_{2,k} = d_{2,k}.$
    \item
    \[
    \tilde d_{1,m-1}
    = \frac{r_{1,m-2} + r_{2,m-1} + r_{i,j}}{r_{1,m-1}}, \,\,\,\,
    \tilde d_{2,m-1}
    = \frac{r_{2,m-2} + r_{1,m-1} + 1}{r_{2,m-1}}.
    \]
    \item
    \[
    \tilde d_{1,m}
    = \frac{r_{1,m-1} + 1}{r_{i,j}}, \,\,\,\,
    \tilde d_{2,m}
    = r_{2,m-1} + r_{i,j}.
    \]
\end{enumerate}
Then $(\tilde{\mathbf d}, \tilde{\mathbf r})$ satisfies
$(\operatorname{diag}(\tilde{\mathbf d}) - \tilde A)\tilde{\mathbf r} = 0,$
where $\tilde A$ is the adjacency matrix of $P_2 \square P_m$. Moreover, $\tilde{\mathbf r}$ is primitive. Hence, $(\tilde{\mathbf d}, \tilde{\mathbf r})$ is an arithmetical structure on $P_2 \square P_m$ if and only if all entries of $\tilde{\mathbf d}$ are positive integers.
\end{theorem}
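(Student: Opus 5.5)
The plan is a direct verification of the defining equations $(\operatorname{diag}(\tilde{\mathbf d})-\tilde A)\tilde{\mathbf r}=0$, vertex by vertex. I will use the block decomposition of Theorem~\ref{adjmminustom} with the column-wise ordering. With that ordering, $\tilde A$ agrees with $A(P_2\square P_{m-1})$ on the first $2(m-1)$ vertices. The only new adjacencies are $(1,m-1)\sim(1,m)$, $(2,m-1)\sim(2,m)$ and $(1,m)\sim(2,m)$. So it suffices to check the scalar equation $\tilde d_{u}\tilde r_{u}=\sum_{v\sim u}\tilde r_v$ at every vertex $u$ of $P_2\square P_m$.

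\textbf{Old vertices.} For a vertex $(a,k)$ with $k\le m-2$, its neighbourhood in $P_2\square P_m$ is the same as in $P_2\square P_{m-1}$. The only possible neighbours in column $m-1$ are $(a,m-1)$, and their $\tilde r$-values are unchanged. Hence the equation is inherited directly from $(\operatorname{diag}(\mathbf d)-A)\mathbf r=0$.

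\textbf{Column $m-1$.} Vertex $(1,m-1)$ now has neighbours $(1,m-2),(2,m-1),(1,m)$, with values $r_{1,m-2},r_{2,m-1},r_{i,j}$. Multiplying the prescribed $\tilde d_{1,m-1}$ by $r_{1,m-1}$ gives exactly this sum. Likewise, $(2,m-1)$ has neighbours $(2,m-2),(1,m-1),(2,m)$ with values $r_{2,m-2},r_{1,m-1},1$, which matches $\tilde d_{2,m-1}r_{2,m-1}$.

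\textbf{Column $m$.} Vertex $(1,m)$ has neighbours $(1,m-1),(2,m)$, so $\tilde d_{1,m}\,r_{i,j}=r_{1,m-1}+1$. Vertex $(2,m)$ has neighbours $(2,m-1),(1,m)$, so $\tilde d_{2,m}\cdot 1=r_{2,m-1}+r_{i,j}$. This completes the kernel identity.

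\textbf{Primitivity, positivity and integrality.} $\tilde{\mathbf r}$ has all entries positive integers because $\mathbf r$ does and $r_{i,j}>0$. It contains the entry $1$, so $\gcd(\tilde{\mathbf r})=1$ and $\tilde{\mathbf r}$ is primitive. Every entry of $\tilde{\mathbf d}$ is a quotient of positive integers, hence a positive rational. The only thing that can fail for $(\tilde{\mathbf d},\tilde{\mathbf r})$ to lie in $\mathcal A(P_2\square P_m)$ is therefore integrality of $\tilde{\mathbf d}$, which gives the ``if and only if''. I will also note that the three divisibility hypotheses are exactly the integrality conditions for the three fractional entries; $\tilde d_{2,m}$ is automatically an integer.

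\textbf{Expected obstacle.} The only delicate point is bookkeeping rather than mathematics. I must confirm that no vertex in column $m-2$ acquires a new neighbour; it does not, since column $m$ is adjacent only to column $m-1$. I also need to treat the degenerate case $m-1=1$, where column $m-2$ does not exist. There I would adopt the convention $r_{a,0}=0$, or assume $m\ge 3$, so that the formulas for $\tilde d_{a,m-1}$ still read correctly.
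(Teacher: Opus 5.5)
Your proposal is correct and follows essentially the same route as the paper. Both check $\tilde d_u\tilde r_u=\sum_{v\sim u}\tilde r_v$ vertex by vertex: the equations in columns $k\le m-2$ are inherited unchanged, those in columns $m-1$ and $m$ follow from the defining formulas, and primitivity comes from the appended entry $1$. Your extra remarks are points the paper leaves implicit: the three divisibility hypotheses already make every entry of $\tilde{\mathbf d}$ a positive integer, and the case $m=2$ needs the convention $r_{a,0}=0$.
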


\begin{proof}
Since $(\mathbf d, \mathbf r)$ is an arithmetical structure on $P_2 \square P_{m-1}$, it satisfies
\[
(\operatorname{diag}(\mathbf d) - A)\mathbf r = 0.
\]
Equivalently, for each vertex $(u,v)$ with $1 \le v \le m-1$, we have
\[
d_{u,v} r_{u,v} = \sum_{(p,q)\sim(u,v)} r_{p,q}.
\]
We extend $\mathbf r$ to $\tilde{\mathbf r}$ by appending the entries $r_{i,j}$ and $1$. Since $\tilde{\mathbf r}$ contains the entry $1$, it follows that $\gcd(\tilde{\mathbf r}) = 1$, and hence $\tilde{\mathbf r}$ is primitive.
Now, define $\tilde{\mathbf d}$ so that for every vertex of $P_2 \square P_m$, the relation
\[
\tilde d_{u,v} \tilde r_{u,v} = \sum_{(p,q)\sim(u,v)} \tilde r_{p,q}
\]
holds.
For vertices $(1,k)$ and $(2,k)$ with $1 \le k \le m-2$, the neighborhood structure is unchanged, so the original equations remain valid and $\tilde d_{u,k} = d_{u,k}$, see the adjacency matrix relation given in Theorem~\ref{adjmminustom}.

For the vertices $(1,m-1)$ and $(2,m-1)$, each gains one additional neighbor, namely $(1,m)$ and $(2,m)$, respectively. Substituting the corresponding values of $\tilde{\mathbf r}$ gives
\[
\tilde d_{1,m-1} r_{1,m-1}
= r_{1,m-2} + r_{2,m-1} + r_{i,j},
\]
\[
\tilde d_{2,m-1} r_{2,m-1}
= r_{2,m-2} + r_{1,m-1} + 1,
\]
which yield the stated formulas.
For the new vertices $(1,m)$ and $(2,m)$, their neighbors are $(1,m-1),(2,m)$ and $(2,m-1),(1,m)$, respectively. Using $\tilde r_{1,m} = r_{i,j}$ and $\tilde r_{2,m} = 1$, we obtain
\[
\tilde d_{1,m} r_{i,j} = r_{1,m-1} + 1,
\]
\[
\tilde d_{2,m} \cdot 1 = r_{2,m-1} + r_{i,j},
\]
which again gives the stated expressions.
Thus, all vertex equations are satisfied, and hence
\[
(\operatorname{diag}(\tilde{\mathbf d}) - \tilde A)\tilde{\mathbf r} = 0.
\]
Finally, $(\tilde{\mathbf d}, \tilde{\mathbf r})$ is an arithmetical structure if and only if all entries of $\tilde{\mathbf d}$ are positive integers.
\end{proof}

\begin{theorem}
Let $G = P_2 \square P_{m-1}$ be the ladder graph with column-wise ordering \\ $
(1,1), (2,1), (1,2), (2,2), \dots, (1,m-1), (2,m-1),$ and let $(\mathbf d, \mathbf r)$ be an arithmetical structure on $G$.
Fix $i \in \{1,2\}$ and $1 \le j \le m-1$. Define a vector $\tilde{\mathbf r}$ on $P_2 \square P_m$ by
\[
\tilde{\mathbf r} =
(r_{1,1}, r_{2,1}, r_{1,2}, r_{2,2}, \dots, r_{1,m-1}, r_{2,m-1}, 1, r_{i,j}).
\]
Define $\tilde{\mathbf d}$ componentwise as follows:
\begin{enumerate}
    \item For $1 \le k \le m-2$, $
    \tilde d_{1,k} = d_{1,k}, \quad \tilde d_{2,k} = d_{2,k}.$
    \item
    \[
    \tilde d_{1,m-1}
    = \frac{r_{1,m-2} + r_{2,m-1} + 1}{r_{1,m-1}}, \,\,\,\,
    \tilde d_{2,m-1}
    = \frac{r_{2,m-2} + r_{1,m-1} + r_{i,j}}{r_{2,m-1}}.
    \]
    \item
    \[
    \tilde d_{1,m}
    = r_{1,m-1} + r_{i,j}, \,\,\,\,
    \tilde d_{2,m}
    = \frac{r_{2,m-1} + 1}{r_{i,j}}.
    \]
\end{enumerate}
Then $(\tilde{\mathbf d}, \tilde{\mathbf r})$ satisfies
$
(\operatorname{diag}(\tilde{\mathbf d}) - \tilde A)\tilde{\mathbf r} = 0,$
where $\tilde A$ is the adjacency matrix of $P_2 \square P_m$. Moreover, $\tilde{\mathbf r}$ is primitive. Hence, $(\tilde{\mathbf d}, \tilde{\mathbf r})$ is an arithmetical structure on $P_2 \square P_m$ if and only if all entries of $\tilde{\mathbf d}$ are positive integers.
\end{theorem}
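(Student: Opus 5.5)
This is the mirror image of the preceding theorem, obtained by swapping the roles of rows $1$ and $2$ in the new column. I will verify the vertex equations $\tilde d_{u,v}\tilde r_{u,v}=\sum_{(p,q)\sim(u,v)}\tilde r_{p,q}$ directly, using the block decomposition of Theorem~\ref{adjmminustom}. I will set $\tilde r_{1,m}=1$ and $\tilde r_{2,m}=r_{i,j}$, and keep the column-wise ordering, so that the first $2(m-1)$ coordinates of $\tilde{\mathbf r}$ coincide with $\mathbf r$.

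\textbf{Old vertices away from the boundary.} By Theorem~\ref{adjmminustom}, the only new edges are $(1,m-1)\sim(1,m)$, $(2,m-1)\sim(2,m)$ (encoded in $B_{m-1}$) and $(1,m)\sim(2,m)$ (encoded in $A(P_2)$). Hence for $1\le k\le m-2$ the neighbourhoods and the $r$-values of all neighbours are unchanged. The original equation $(\operatorname{diag}(\mathbf d)-A)\mathbf r=0$ therefore gives $\tilde d_{u,k}=d_{u,k}$.

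\textbf{Column $m-1$.} The vertex $(1,m-1)$ has neighbours $(1,m-2)$, $(2,m-1)$ and $(1,m)$. This gives
\[
\tilde d_{1,m-1}\, r_{1,m-1}=r_{1,m-2}+r_{2,m-1}+1 .
\]
The vertex $(2,m-1)$ has neighbours $(2,m-2)$, $(1,m-1)$ and $(2,m)$, giving
\[
\tilde d_{2,m-1}\, r_{2,m-1}=r_{2,m-2}+r_{1,m-1}+r_{i,j}.
\]
These are exactly the stated formulas. When $m-1=1$ the term $r_{u,m-2}$ is absent and is read as $0$.

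\textbf{New column $m$.} The vertex $(1,m)$ has neighbours $(1,m-1)$ and $(2,m)$, so $\tilde d_{1,m}\cdot 1=r_{1,m-1}+r_{i,j}$. The vertex $(2,m)$ has neighbours $(2,m-1)$ and $(1,m)$, so $\tilde d_{2,m}\,r_{i,j}=r_{2,m-1}+1$. Assembling the four cases shows $(\operatorname{diag}(\tilde{\mathbf d})-\tilde A)\tilde{\mathbf r}=0$.

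\textbf{Positivity and primitivity.} Every entry of $\tilde{\mathbf r}$ is an entry of $\mathbf r$ or equals $1$, so $\tilde{\mathbf r}>0$. Since $\tilde r_{1,m}=1$, we get $\gcd(\tilde{\mathbf r})=1$. Each $\tilde d$ entry is a ratio of positive quantities, hence positive; it is an integer exactly when the corresponding divisibility holds. The unchanged entries and $\tilde d_{1,m}$ are automatically integers.

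The arithmetical structure is therefore determined by integrality of $\tilde d_{1,m-1}$, $\tilde d_{2,m-1}$ and $\tilde d_{2,m}$, which gives the claimed equivalence. No step is a real obstacle; the only care needed is the bookkeeping of which new neighbour carries the value $1$ and which carries $r_{i,j}$, together with the boundary case $m=2$.
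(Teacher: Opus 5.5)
Your proposal is correct and is essentially the paper's argument. The paper gives no separate proof for this statement, but its proof of the preceding mirror-image theorem is exactly this check: verify $\tilde d_{u,v}\tilde r_{u,v}=\sum_{(p,q)\sim(u,v)}\tilde r_{p,q}$ vertex by vertex using the block form of $\tilde A$, and get primitivity from the appended entry $1$. Your bookkeeping of which new vertex carries $1$ and which carries $r_{i,j}$ is accurate, as is your observation that only $\tilde d_{1,m-1}$, $\tilde d_{2,m-1}$ and $\tilde d_{2,m}$ need an integrality check (with $r_{u,0}$ read as $0$ when $m=2$).
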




\begin{exam}
Consider the graph $P_2 \square P_m$ with $m=4$. Then by Theorem~\ref{adjmminustom} we have
$$
A(P_2 \square P_4)=
\begin{pmatrix}
A(P_2 \square P_3) & B_3 \\
B_3^T & A(P_2)
\end{pmatrix},
\quad
B_3=
\begin{pmatrix}
0&0\\
0&0\\
0&0\\
0&0\\
1&0\\
0&1
\end{pmatrix},   A(P_2) =\begin{pmatrix}0&1\\1&0\end{pmatrix}.
$$
Now, 
$${\bf\tilde{d}} = \begin{pmatrix}
   d_{11} \\
   d_{21}\\
   d_{12}\\
   d_{22} \\
   1+r_{2,3}+ r_{1,2}/r_{1, 3}\\
     1+r_{1, 3}+ r_{2,2}/r_{2, 3}\\
     1+r_{1, 3}\\
     1+r_{2, 3}
\end{pmatrix} \text{   and    } {\bf\tilde{r}} = \begin{pmatrix}
   r_{11} \\
   r_{21}\\
   r_{12}\\
   r_{22} \\
    r_{13}\\
      r_{23}\\
     1\\
     1
\end{pmatrix} $$ is an arithmetical structures on $P_2 \square P_4.$
\end{exam}

\begin{theorem}[Recursive construction of arithmetical structures on ladder graphs]
Let $P_2 \square P_m$ be the ladder graph with $m \ge 2$, and let 
$(\mathbf{r}_{m-1}, \mathbf{d}_{m-1})$ be an arithmetical structure on $P_2 \square P_{m-1}$, 
with $\mathbf{r}_{m-1} = (r_1, \dots, r_{2m-2})^\top$.  

Then all arithmetical structures $(\mathbf{r}_m, \mathbf{d}_m)$ on $P_2 \square P_m$ 
can be obtained recursively as follows:
\begin{enumerate}
    \item Choose positive integers $y_1, y_2 \in \mathbb{Z}_{>0}$ satisfying the divisibility conditions
    \[
        y_1 \mid r_{2m-3} + y_2, \quad y_2 \mid r_{2m-2} + y_1.
    \]
    \item Define
    \[
        \mathbf{r}_m = (r_1, \dots, r_{2m-2}, y_1, y_2)^\top.
    \]
    \item Define the corresponding degrees by
    \[
        d_i = \frac{\sum_{j \sim i} r_j}{r_i}, \quad i = 1, \dots, 2m.
    \]
\end{enumerate}

Then $(\mathbf{r}_m, \mathbf{d}_m)$ is an arithmetical structure on $P_2 \square P_m$. 
Conversely, every arithmetical structure on $P_2 \square P_m$ arises in this way from 
an arithmetical structure on $P_2 \square P_{m-1}$.
\end{theorem}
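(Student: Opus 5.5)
The plan is to verify the two directions separately, working throughout with the column-wise ordering and the block decomposition of Theorem~\ref{adjmminustom}. In that ordering the indices $2m-3,2m-2$ correspond to $(1,m-1),(2,m-1)$, and the new indices $2m-1,2m$ correspond to $(1,m),(2,m)$. The only edges joining the old block to the new column are the two rows of $B_{m-1}$, namely $2m-3\sim 2m-1$ and $2m-2\sim 2m$, together with the rung $2m-1\sim 2m$ coming from $A(P_2)$.

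\emph{Forward direction.} With $\mathbf r_m=(r_1,\dots,r_{2m-2},y_1,y_2)^\top$ and $d_i$ defined in step~3 by $d_i r_i=\sum_{j\sim i}r_j$, the identity $(\operatorname{diag}(\mathbf d_m)-\tilde A)\mathbf r_m=0$ holds row by row by construction. So the work is to check positivity, integrality and primitivity, in that order.
\begin{enumerate}
\item Positivity is immediate, since every $r_j$ and $y_k$ is positive.
\item For $i\le 2m-4$ the neighbourhood is unchanged, so $d_i$ equals the old (integer) degree.
\item For the new vertices, $d_{2m-1}=(r_{2m-3}+y_2)/y_1$ and $d_{2m}=(r_{2m-2}+y_1)/y_2$. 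These are integers exactly by the divisibility conditions in step~1.
\item For the two boundary vertices of the old ladder one gets $d_{2m-3}=d^{\text{old}}_{2m-3}+y_1/r_{2m-3}$ and $d_{2m-2}=d^{\text{old}}_{2m-2}+y_2/r_{2m-2}$.
\item Primitivity holds because any common divisor of $\mathbf r_m$ divides every entry of $\mathbf r_{m-1}$, which is primitive.
\end{enumerate}

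\emph{Converse.} Given $(\mathbf r_m,\mathbf d_m)\in\mathcal A(P_2\square P_m)$, I would set $y_1=r_{2m-1}$ and $y_2=r_{2m}$. The divisibility conditions $y_1\mid r_{2m-3}+y_2$ and $y_2\mid r_{2m-2}+y_1$ are then just the integrality of $d_{2m-1}$ and $d_{2m}$. Next I would take $\mathbf r_{m-1}=(r_1,\dots,r_{2m-2})$ and define $\mathbf d_{m-1}$ by neighbour sums in $P_2\square P_{m-1}$. Rows $1,\dots,2m-4$ are unchanged, and the two boundary rows satisfy $d^{\text{old}}_{2m-3}=d_{2m-3}-y_1/r_{2m-3}$ and similarly for $2m-2$. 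The matrix $\operatorname{diag}(\mathbf d_{m-1})-A(P_2\square P_{m-1})$ is then a $Z$-matrix annihilating a positive vector. By Theorem~\ref{Theorem 3.2 from Corrales and Valencia} it is an irreducible almost non-singular $M$-matrix, and reapplying the recursion to this data returns $(\mathbf r_m,\mathbf d_m)$.

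\emph{Main obstacle.} The difficulty is integrality at the seam, and in the converse also primitivity of the truncated vector. Step~4 of the forward direction, and the corresponding formula in the converse, require $r_{2m-3}\mid y_1$ and $r_{2m-2}\mid y_2$, which step~1 does not impose. I would first try to show these follow from the hypotheses. If they do not, the cleanest route is to test small cases ($m=3$, starting from $\mathbf r=\mathbf 1$ on $C_4$). If a counterexample appears, I would record precisely which of these seam conditions must be added for step~3 to produce integer degrees, and which must hold for the truncated vector to be primitive, and then state the recursion with exactly those conditions. I would not silently assume them.
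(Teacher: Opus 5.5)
You have located exactly the right difficulty, but you leave it open, and it cannot be resolved in favour of the statement: as written, the theorem is false in both directions. The paper's proof uses the same block decomposition you do. It then simply asserts the two facts you flag: that the degrees from step~3 are ``all positive integers'', and that an arithmetical structure on $P_2\square P_m$ ``restricts to'' one on $P_2\square P_{m-1}$.

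Your planned sanity check cannot expose this. Starting from $\mathbf r=\mathbf 1$ on $C_4$ gives $r_{2m-3}=r_{2m-2}=1$, which divide everything, so you need a seam entry larger than $1$.

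\emph{Forward direction.} In column-wise order, $\mathbf r_2=(1,1,2,1)$ with $\mathbf d_2=(3,2,1,3)$ is an arithmetical structure on $C_4=P_2\square P_2$. Take $y_1=y_2=1$; step~1 holds, since $y_1\mid r_3+y_2=3$ and $y_2\mid r_4+y_1=2$. Yet vertex $(1,2)$ of $P_2\square P_3$ gets $d_3=(1+1+1)/2=d^{\text{old}}_3+y_1/r_3=3/2$.

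\emph{Converse.} $\mathbf r_3=(1,1,2,2,1,1)$ with $\mathbf d_3=(3,3,2,2,3,3)$ is an arithmetical structure on $P_2\square P_3$; it is the symmetric lift of $(1,2,1)$ on $P_3$. Its only possible predecessor is $(1,1,2,2)$, which is not an arithmetical structure on $C_4$: vertex $(1,2)$ would need $2\mid 1+2$. Your appeal to the Corrales--Valencia $M$-matrix criterion does not help here. That result concerns real $Z$-matrices and gives neither integrality of $\mathbf d_{m-1}$ nor primitivity of the truncated vector.

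What your analysis does establish is the corrected statement. If you add the hypotheses $r_{2m-3}\mid y_1$ and $r_{2m-2}\mid y_2$, your forward argument goes through verbatim, because the seam degrees are $d^{\text{old}}_{2m-3}+y_1/r_{2m-3}$ and $d^{\text{old}}_{2m-2}+y_2/r_{2m-2}$.

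Conversely, take $(\mathbf r_m,\mathbf d_m)\in\mathcal A(P_2\square P_m)$. Its truncation is an arithmetical structure on $P_2\square P_{m-1}$ if and only if $r_{2m-3}\mid r_{2m-1}$ and $r_{2m-2}\mid r_{2m}$.
\begin{itemize}
\item Under these conditions the truncated degrees are integers, and they are positive because each is a sum of old neighbour values.
\item Primitivity is then automatic. The gcd of the truncation divides $r_{2m-3}$ and $r_{2m-2}$, hence $r_{2m-1}$ and $r_{2m}$, hence every entry of the primitive vector $\mathbf r_m$.
\end{itemize}
So the recursion, with your seam conditions stated explicitly, generates exactly the structures whose truncation is arithmetical. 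As the example above shows, this is in general a proper subfamily of $\mathcal A(P_2\square P_m)$, not all of it.
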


\begin{proof}
The adjacency matrix of $P_2 \square P_m$ can be written in block form as
\[
\tilde{A} = 
\begin{pmatrix}
A(P_2 \square P_{m-1}) & B_{m-1} \\
B_{m-1}^\top & A(P_2)
\end{pmatrix},
\]
where $B_{m-1}$ has zeros everywhere except the last two rows, which form $I_2$.

Let $\mathbf{r}_m = \begin{pmatrix} \mathbf{r}_{m-1} \\ y_1 \\ y_2 \end{pmatrix}$. 
By definition, $(\mathbf{r}_m, \mathbf{d}_m)$ is an arithmetical structure if
\[
(\mathrm{diag}(\mathbf{d}_m) - \tilde{A}) \mathbf{r}_m = 0.
\]

Writing in block form gives
\[
\begin{pmatrix}
\mathrm{diag}(\mathbf{d}_{m-1}) - A(P_2 \square P_{m-1}) & -B_{m-1} \\
-B_{m-1}^\top & \mathrm{diag}(\mathbf{d}_2) - A(P_2)
\end{pmatrix}
\begin{pmatrix} \mathbf{r}_{m-1} \\ y_1 \\ y_2 \end{pmatrix} = 0.
\]

The bottom block gives the system
\[
(\mathrm{diag}(\mathbf{d}_2) - A(P_2)) \begin{pmatrix} y_1 \\ y_2 \end{pmatrix} 
- B_{m-1}^\top \mathbf{r}_{m-1} = 0,
\]
which reduces to the divisibility conditions for $y_1, y_2$:
\[
y_1 \mid r_{2m-3} + y_2, \quad y_2 \mid r_{2m-2} + y_1.
\]

Once $y_1, y_2$ are chosen, define degrees by
\[
d_i = \frac{\sum_{j \sim i} r_j}{r_i}, \quad i = 1, \dots, 2m.
\]
This ensures $(\mathbf{r}_m, \mathbf{d}_m)$ satisfies 
$(\mathrm{diag}(\mathbf{d}_m) - \tilde{A}) \mathbf{r}_m = 0$ with all positive integers.

Conversely, any arithmetical structure on $P_2 \square P_m$ restricts to 
an arithmetical structure on $P_2 \square P_{m-1}$ and the last two entries satisfy the same divisibility conditions. Hence all arithmetical structures arise recursively.
\end{proof}

\subsection{Arithmetical structures of $P_2 \Box P_m$ from $\mathcal{A}(P_m)$}
In this section, we define arithmetical structures on $P_2 \square P_m$ from $\mathcal{A}(P_m)$

\begin{theorem}\label{stac}[Stacked Theorem]
Let $m \ge 2$, and let $(d_1, r_1)$  be an arithmetical structure on $P_m$, i.e.,
$$
\operatorname{diag}(d_1)\, r_1 = A(P_m)\, r_1,
\quad r_1 \in \mathbb{Z}_{>0}^m,\ d_1 \in \mathbb{Z}_{>0}^m.
$$
Let $P_2 \square P_m$ be a ladder graph with vertex ordering $(1,1),\ldots,(1,m),(2,1), \ldots,(2,m).$
Then arithmetical structures on $P_2 \square P_m$ can be constructed as follows: 
\begin{enumerate}
    \item   
    Define
    \[
    {\bf r} =
    \begin{pmatrix}
    r_1 \\
    r_1
    \end{pmatrix}, \quad
    {\bf d} =
    \begin{pmatrix}
    d_1 + \mathbf{1} \\
    d_1 + \mathbf{1}
    \end{pmatrix},
    \]
    where $\mathbf{1} = (1,\dots,1) \in \mathbb{Z}^m$.  
    Then $({\bf d},{\bf r})$ is an arithmetical structure on $P_2 \square P_m$.

    \item 
    Let $k \in \mathbb{Z}_{\ge 0}^m$, and define $
    D_1 = \operatorname{diag}(d_1 + k).$
    Set $
    r_2 := (D_1 - A(P_m))\, r_1.$
    If $r_2 \in \mathbb{Z}_{>0}^m$, and $
r_2 \mid \big(A(P_m)\, r_2 + r_1\big)
\quad \text{(componentwise)}.$ Define $d_2 \in \mathbb{Z}_{>0}^m$ by
\[
(d_2)_i = \frac{(A(P_m)\, r_2 + r_1)_i}{(r_2)_i}, 
\quad 1 \le i \le m.
\]
That is, there exists $d_2 \in \mathbb{Z}_{>0}^m$ such that $(D_2 - A(P_m))\, r_2 = r_1,$
where $D_2 = \operatorname{diag}(d_2)$, then
\[
r =
    \begin{pmatrix}
    r_1 \\
    r_2
    \end{pmatrix}, \quad
    d =
    \begin{pmatrix}
    d_1 + k \\
    d_2
    \end{pmatrix}
    \]
    is an arithmetical structure on $P_2 \square P_m$.
\end{enumerate}
\end{theorem}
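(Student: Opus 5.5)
We verify the defining equation blockwise, using the row-wise ordering $(1,1),\ldots,(1,m),(2,1),\ldots,(2,m)$. In this ordering the adjacency matrix is
\[
A(P_2\square P_m)=A(P_2)\otimes I_m+I_2\otimes A(P_m)=\begin{pmatrix} A(P_m) & I_m\\ I_m & A(P_m)\end{pmatrix}.
\]
For a vector $\mathbf r=\begin{pmatrix} r_1\\ r_2\end{pmatrix}$ and $\mathbf d=\begin{pmatrix} \delta_1\\ \delta_2\end{pmatrix}$, the condition $(\diag(\mathbf d)-A)\mathbf r=0$ is therefore equivalent to the pair of vector equations
\[
(\diag(\delta_1)-A(P_m))\,r_1=r_2,\qquad (\diag(\delta_2)-A(P_m))\,r_2=r_1 .
\]
Once these hold, positivity of $\mathbf d$ and $\mathbf r$ is immediate from the hypotheses. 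Primitivity is then the only remaining requirement.

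For part (1) take $\delta_1=\delta_2=d_1+\mathbf 1$ and $r_2=r_1$. Then
\[
(\diag(d_1+\mathbf 1)-A(P_m))\,r_1=(\diag(d_1)-A(P_m))\,r_1+r_1=0+r_1=r_1,
\]
so both block equations reduce to the arithmetical equation for $(d_1,r_1)$ on $P_m$. Positivity and integrality of $d_1+\mathbf 1$ are clear. Since $\gcd(r_1,r_1)=\gcd(r_1)=1$, the vector $\mathbf r$ is primitive.

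For part (2) the first block equation $(D_1-A(P_m))r_1=r_2$ holds by the definition of $r_2$. The second block equation $(D_2-A(P_m))r_2=r_1$ is exactly the componentwise identity $(d_2)_i(r_2)_i=(A(P_m)r_2+r_1)_i$. This holds by the definition of $d_2$, and the divisibility hypothesis makes $d_2$ a positive integer vector. Positivity of $r_2$ is assumed. The entries of $d_1+k$ are positive because $k\ge 0$.

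The only point needing care is primitivity in part (2). Since $r_2=(D_1-A(P_m))r_1$ is an integer combination of the entries of $r_1$, every common divisor of $r_1$ divides $r_2$. Hence $\gcd(r_1,r_2)=\gcd(r_1)=1$, because $r_1$ is primitive as part of an arithmetical structure on $P_m$.

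The main obstacle, such as it is, is bookkeeping. One must use the row-wise ordering (not the column-wise one of Theorem~\ref{adjmminustom}) so that the off-diagonal blocks are exactly $I_m$. One must also confirm that the hypotheses on $r_2$ are precisely what makes $d_2$ well defined.
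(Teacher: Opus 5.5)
Your proposal is correct and follows the paper's proof: you use the same block form $\begin{pmatrix} A(P_m) & I_m\\ I_m & A(P_m)\end{pmatrix}$ in the row-wise ordering, reduce to the same pair of equations $(D_1-A(P_m))r_1=r_2$ and $(D_2-A(P_m))r_2=r_1$, and verify each part the same way. Your explicit primitivity check is a small improvement, since the paper's proof omits it; it is also easier than you make it, because the gcd of all entries of $\mathbf r$ divides every entry of $r_1$ and hence divides $\gcd(r_1)=1$.
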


\begin{proof}
Let $(d_1, r_1)$ be an arithmetical structure on $P_m$, so that $\operatorname{diag}(d_1)\, r_1 = A(P_m)\, r_1.$ 
The adjacency matrix of $P_2 \square P_m$ is
\[
A(P_2 \square P_m) =
\begin{pmatrix}
A(P_m) & I_m \\
I_m & A(P_m)
\end{pmatrix}.
\]
Let
\[
r =
\begin{pmatrix}
r_1 \\
r_2
\end{pmatrix}, \quad
d =
\begin{pmatrix}
d_1' \\
d_2
\end{pmatrix},
\]
where $d_1' = d_1 + k$, and let $D_i = \operatorname{diag}(d_i), i=1, 2$.
Then the condition $(D - A)r = 0$ becomes
\[
\begin{pmatrix}
D_1 - A(P_m) & -I_m \\
- I_m & D_2 - A(P_m)
\end{pmatrix}
\begin{pmatrix}
r_1 \\
r_2
\end{pmatrix}
= 0,
\]
which is equivalent to the system
\[
(D_1 - A(P_m))r_1 = r_2, \qquad
(D_2 - A(P_m))r_2 = r_1.
\]
For the diagonal entries, take $k = \mathbf{1}$, so $D_1 = \operatorname{diag}(d_1 + \mathbf{1})$. Then
\[
D_1 r_1 = \operatorname{diag}(d_1)r_1 + r_1 = A(P_m)r_1 + r_1,
\]
and hence $(D_1 - A(P_m))r_1 = r_1.$
Thus $r_2 = r_1$. Taking $d_2 = d_1 + \mathbf{1}$, the second equation is also satisfied, and therefore $(d,r)$ is an arithmetical structure.

Let $k \in \mathbb{Z}_{\ge 0}^m$ and define $D_1 = \operatorname{diag}(d_1 + k)$. Set $
r_2 = (D_1 - A(P_m))r_1.$
If $r_2 \in \mathbb{Z}_{>0}^m$, then the first equation holds.
To satisfy the second equation, we require
\[
(D_2 - A(P_m))r_2 = r_1,
\]
which is equivalent to $D_2 r_2 = A(P_m)r_2 + r_1.$
Since $r_2 > 0$, this determines $d_2$ componentwise by
\[
(d_2)_i = \frac{(A(P_m)r_2 + r_1)_i}{(r_2)_i}.
\]
If these values are positive integers, then $d_2 \in \mathbb{Z}_{>0}^m$, and both equations are satisfied. Hence $(D - A)r = 0$, and $(d,r)$ is an arithmetical structure on $P_2 \square P_m$.
\end{proof}

\begin{remark}
Although $k \in \mathbb{Z}_{\ge 0}^m$ is arbitrary in the construction, the
positivity and divisibility conditions impose strong restrictions. Since
$P_2 \square P_m$ admits only finitely many arithmetical structures, it
follows that only finitely many choices of $k$ yield valid constructions.

In particular, the entries of $k$ are effectively bounded by the conditions
\[
r_2 = (D_1 - A(P_m))r_1 > 0
\quad \text{and} \quad
r_2 \mid (A(P_m)r_2 + r_1),
\]
and hence the construction produces only finitely many distinct arithmetical
structures.
\end{remark}

\begin{remark}
Let $P_2 \square P_m$ be the Cartesian product graph. The set of arithmetical structures $(\mathbf{d}, \mathbf{r})$ on $P_2 \square P_m$ satisfying $
r(1,i) = r(2,i) \quad \text{for all } i = 1, \dots, m,$
is in bijection with the set of arithmetical structures on $P_m$. 
\end{remark}

\begin{remark}
The above lemma describes only a subclass of arithmetical structures on $P_2 \square P_m$. In general, there exist non-symmetric arithmetical structures that do not satisfy $r(1,i)=r(2,i)$.
\end{remark}

\begin{observation}
 We have $  C_{m-1} \leq \#$ arith $(P_2 \square P_m) \leq C_{m-1}^2$.  
\end{observation}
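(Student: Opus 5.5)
The plan is to prove the two inequalities separately, using the row decomposition $\mathbf r=(r_1,r_2)$ of the Stacked Theorem (Theorem~\ref{stac}). Throughout, $C_{m-1}$ is the Catalan number that counts $\mathcal A(P_m)$ \cite{braun2017arithmetical}.

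\emph{Lower bound.} Part~1 of Theorem~\ref{stac} sends each $(d_1,r_1)\in\mathcal A(P_m)$ to
\[
\Phi(d_1,r_1)=\left(\begin{pmatrix} d_1+\mathbf 1\\ d_1+\mathbf 1\end{pmatrix},\begin{pmatrix} r_1\\ r_1\end{pmatrix}\right).
\]
The vector $(r_1,r_1)$ is primitive because $r_1$ is, so $\Phi(d_1,r_1)\in\mathcal A(P_2\square P_m)$. The map $\Phi$ is injective because $r_1$ can be read off from the first block. By the Remark following Theorem~\ref{stac}, $\Phi$ is a bijection onto the structures with $r(1,i)=r(2,i)$ for all $i$. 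This gives $\#\operatorname{Arith}(P_2\square P_m)\ge |\mathcal A(P_m)|=C_{m-1}$.

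\emph{Upper bound.} The natural approach is to build an injection $\Psi:\mathcal A(P_2\square P_m)\to\mathcal A(P_m)\times\mathcal A(P_m)$, one factor for each row. By the block form in the proof of Theorem~\ref{stac}, every structure satisfies
\[
(D_1-A(P_m))r_1=r_2,\qquad (D_2-A(P_m))r_2=r_1 .
\]
The intended construction of $\Psi$ has three steps. First, divide $r_\ell$ by its gcd to get a primitive vector $\bar r_\ell$. Second, replace $d_\ell$ by the unique $d'_\ell$ with $(\operatorname{diag}(d'_\ell)-A(P_m))\bar r_\ell=0$. Third, use Theorem~\ref{Theorem 3.2 from Corrales and Valencia} and Theorem~\ref{Theorem 2.6 from Corrales and Valencia} to argue that the $Z$-matrix $D_\ell-A(P_m)$ is almost non-singular, so that lowering its diagonal yields a singular $M$-matrix with a positive kernel vector. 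Injectivity would then follow because the pair of row kernels, together with the coupling equations, recovers $(r_1,r_2)$.

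\emph{Main obstacle.} The hardest step is the third one, together with the injectivity claim. The difficulty is that $(d'_\ell)_i=(A(P_m)\bar r_\ell)_i/(\bar r_\ell)_i$ need not be an integer: the coupling term $r_{3-\ell}$ is exactly what makes the row equations integral. I would first test the map $\Psi$ on small cases before relying on it. The case $m=2$ must be checked, since $P_2\square P_2=C_4$. There $C_1^2=1$, while $C_4$ already has the distinct structures $\mathbf r=\mathbf 1$ and $\mathbf r=(1,1,2,1)$ (listed in the cyclic order of $C_4$). So the inequality as written fails at $m=2$. The upper bound can hold at best for $m\ge3$, and even there only with a corrected constant.

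My honest expectation is therefore that the lower bound is provable exactly as above, but the upper bound $C_{m-1}^2$ cannot be established in the stated form. Any valid upper-bound argument along these lines would have to start at $m\ge3$ and allow more than one preimage per row pair.
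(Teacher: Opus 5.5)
Your lower bound is exactly the paper's argument. You use the symmetric embedding $(d_1,r_1)\mapsto\bigl((d_1+\mathbf 1,d_1+\mathbf 1),(r_1,r_1)\bigr)$ of Theorem~\ref{stac}, which the paper later uses to prove $C_{m-1}\le N(m)$.

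Your verdict on the upper bound is also correct. The paper gives no argument for it, and it is false already at $m=2$, since $\#\operatorname{Arith}(C_4)=\binom{7}{3}=35>1=C_1^2$.

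Your suggestion that the bound might survive for $m\ge 3$ is too generous. On $P_2\square P_3$, take the all-ones vector together with the four vectors that have $r=2$ at a single corner and $1$ elsewhere; that corner gets $d=1$ and its two neighbours each gain $1$. These already give $5>4=C_2^2$ structures, all of which appear in the paper's own table. So no injection $\Psi$ into $\mathcal A(P_m)\times\mathcal A(P_m)$ can exist at $m=3$ either.
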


\begin{exam}
Consider the path $P_4$ with 4 vertices. Its primitive arithmetical structures $r_1 \in \mathbb{Z}_{>0}^4$ with corresponding $d_1 \in \mathbb{Z}_{>0}^4$ are:
\[
\begin{array}{c|c}
r_1 & d_1 \\
\hline
(1,2,1,1) & (2,1,3,1) \\
(1,1,2,1) & (1,3,1,2) \\
(1,1,1,1) & (1,2,2,1)
\end{array}
\]
Using the construction from the theorem, we define
\[
r = \begin{pmatrix} r_1 \\ r_1 \end{pmatrix}, \quad
d = \begin{pmatrix} d_1 + 1 \\ d_1 + 1 \end{pmatrix}.
\]
Then the corresponding arithmetical structures $(d,r)$ on $P_2 \square P_4$ are:
\[
\begin{array}{c|c}
r & d \\
\hline
(1,2,1,1,1,2,1,1) & (3,2,4,2,3,2,4,2) \\
(1,1,2,1,1,1,2,1) & (2,4,2,3,2,4,2,3) \\
(1,1,1,1,1,1,1,1) & (2,3,3,2,2,3,3,2)
\end{array}
\]
These examples illustrate how the arithmetical structures on $P_2 \square P_4$ can be constructed from the arithmetical structures of $P_4$ using the theorem.
\end{exam}

\begin{theorem}
Let $G = P_2 \square P_m$ be the ladder graph on $2m$ vertices, and let 
$\mathcal{A}(G)$ denote the set of primitive arithmetical structures on a graph $G$. Define $
N(m) = |\mathcal{A}(G)|.$
Then $
C_{m-1} \le N(m),$
where $C_{m-1}$ is the $(m-1)$-th Catalan number. 
\end{theorem}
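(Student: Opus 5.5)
We prove the bound by exhibiting an injection from $\mathcal{A}(P_m)$ into $\mathcal{A}(P_2 \square P_m)$, using part (1) of the Stacked Theorem (Theorem~\ref{stac}). We then invoke the known fact that arithmetical structures on the path $P_m$ are counted by the Catalan number $C_{m-1}$ \cite{braun2017arithmetical}, so $|\mathcal{A}(P_m)| = C_{m-1}$.

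Define
\[
\Phi:\mathcal{A}(P_m)\to\mathcal{A}(P_2\square P_m),\qquad
\Phi(d_1,r_1)=\left(\begin{pmatrix} d_1+\mathbf 1\\ d_1+\mathbf 1\end{pmatrix},\begin{pmatrix} r_1\\ r_1\end{pmatrix}\right).
\]
By Theorem~\ref{stac}(1) the pair $(\mathbf d,\mathbf r)$ satisfies $(\operatorname{diag}(\mathbf d)-A)\mathbf r=0$ with positive integer entries. The short check is the block computation $(D_1-A(P_m))r_1 = r_1$, which is exactly the second block equation with $r_2=r_1$.

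Next we check the two remaining properties of $\Phi$.
\begin{enumerate}
\item \emph{Primitivity is preserved.} The entries of $\mathbf r$ are those of $r_1$ repeated, so $\gcd(\mathbf r)=\gcd(r_1)=1$. Hence $\Phi$ indeed lands in the set of primitive arithmetical structures.
\item \emph{$\Phi$ is injective.} If $\Phi(d_1,r_1)=\Phi(d_1',r_1')$, comparing the first $m$ coordinates of the $r$-vectors gives $r_1=r_1'$. Since $\mathbf d$ is determined by $\mathbf r$ (or by comparing $d_1+\mathbf 1$ directly), also $d_1=d_1'$.
\end{enumerate}
Consequently $N(m)=|\mathcal{A}(P_2\square P_m)|\ge|\mathcal{A}(P_m)|=C_{m-1}$.

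There is no real obstacle in this argument; the only point requiring care is the indexing convention for the Catalan count. We must confirm that, with $P_m$ having $m$ vertices, the number of arithmetical structures is $C_{m-1}=\frac{1}{m}\binom{2m-2}{m-1}$, the count established in \cite{braun2017arithmetical}. As a sanity check, for $m=4$ this gives $C_3=5$. The example above lists only three structures, but that list is illustrative rather than complete, so it does not affect the lower bound; the inequality remains valid even if the list were incomplete.
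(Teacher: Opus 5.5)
Your proof is correct and follows the same route as the paper. Both arguments embed $\mathcal{A}(P_m)$ into the ladder via the symmetric lift $\mathbf r=(r_1,r_1)$, $\mathbf d=(d_1+\mathbf 1,d_1+\mathbf 1)$ and then use $|\mathcal{A}(P_m)|=C_{m-1}$. Your version is slightly more careful: the paper argues from symmetric ladder structures down to $P_m$, whereas you check the direction the lower bound actually needs, namely that every path structure lifts, with primitivity and injectivity verified.
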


\begin{proof}
Let $({\bf d},{\bf r})$ be a primitive arithmetical structure on $G = P_2 \square P_m$. 
Write
\[
{\bf r} =
\begin{pmatrix}
r_1 \\
r_2
\end{pmatrix}, 
\quad r_1, r_2 \in \mathbb{Z}_{>0}^m.
\]

The arithmetical structure condition is $
(\operatorname{diag}(d) - A(G))r = 0,$
which is equivalent to the entrywise condition
$
d_i r_i = \sum_{j \sim i} r_j.$

For each $1 \le i \le m$, using the ladder structure, we obtain:
\begin{equation}
d_{1,i} r_{1,i} = r_{1,i-1} + r_{1,i+1} + r_{2,i}, \tag{1}
\end{equation}
\begin{equation}
d_{2,i} r_{2,i} = r_{2,i-1} + r_{2,i+1} + r_{1,i}, \tag{2}
\end{equation}
with appropriate boundary conditions at $i=1,m$. Consider symmetric vectors $r_1 = r_2 = x$. Then (1)–(2) reduce to
\[
d_i x_i = x_{i-1} + x_{i+1} + x_i,
\]
which is equivalent to $
x_i \mid (x_{i-1} + x_{i+1}).$
Thus $x$ is an arithmetical structure on $P_m$. Hence
\[
N(m) \ge |\mathcal{A}(P_m)| = C_{m-1}.
\]


\end{proof}

Now, we derive the arithmetical structures on Cartesian product via $M$-matrices.

\begin{theorem}[Arithmetical structures on Cartesian product via $M$-matrices).]
Let $G$ and $H$ be finite connected graphs. Suppose $(d^G, x)$ and $(d^H, y)$ are arithmetical structures on $G$ and $H$, respectively. Define
\[
M_G = \operatorname{diag}(d^G) - A_G, \quad
M_H = \operatorname{diag}(d^H) - A_H,
\]
where $A_G$ and $A_H$ are the adjacency matrices of $G$ and $H$.
Then there exists an arithmetical structure $(({\bf d}, {\bf r}))$ on the Cartesian product $G \square H$ given by
\[
{\bf r}=x \otimes y, {\text{    that is ,    } }r_{(i,j)} = x_i y_j, \quad
d_{(i,j)} = d^G_i + d^H_j.
\]
Equivalently, if
$
M_{G \square H} := \operatorname{diag}({\bf d}) - A_{G \square H},
$
then
\[
M_{G \square H} = M_G \otimes I + I \otimes M_H,
\text{    and    }
M_{G \square H}(x \otimes y) = 0.
\]
\end{theorem}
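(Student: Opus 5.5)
The plan is to verify the matrix identity first and then deduce the kernel condition from the mixed-product property of the Kronecker product. Throughout, we order the vertices of $G \square H$ lexicographically, so that vertex $(i,j)$ sits in position $(i-1)|V(H)|+j$. This is exactly the indexing under which $x \otimes y$ has entry $x_i y_j$ in position $(i,j)$. By the permutation proposition proved earlier, any other ordering gives a permutation-equivalent structure, so this choice loses nothing.

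\emph{Step 1: the adjacency matrix.} As recorded in the preliminaries for $P_2 \square P_m$, and directly from the definition of the Cartesian product, we have
\[
A_{G\square H} = A_G \otimes I + I \otimes A_H .
\]
Indeed, the $((i,j),(p,q))$ entry of $A_G\otimes I$ is $(A_G)_{ip}\,\delta_{jq}$, and that of $I\otimes A_H$ is $\delta_{ip}\,(A_H)_{jq}$. Their sum is $1$ exactly when $(i,j)$ and $(p,q)$ are adjacent in $G\square H$, and $0$ otherwise.

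\emph{Step 2: the diagonal matrix.} The $((i,j),(i,j))$ entry of $\operatorname{diag}(d^G)\otimes I + I\otimes \operatorname{diag}(d^H)$ is $d^G_i + d^H_j$, and all off-diagonal entries vanish. Hence this matrix equals $\operatorname{diag}(\mathbf d)$. Subtracting Step 1 and using distributivity gives
\[
M_{G\square H} = M_G\otimes I + I\otimes M_H .
\]

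\emph{Step 3: the kernel condition.} By the mixed-product property,
\[
M_{G\square H}(x\otimes y) = (M_G x)\otimes (I y) + (I x)\otimes (M_H y) = 0\otimes y + x\otimes 0 = 0 .
\]

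\emph{Step 4: positivity and primitivity.} Positivity is immediate. Each $r_{(i,j)} = x_i y_j$ is a positive integer, and each $d_{(i,j)} = d^G_i + d^H_j$ is a positive integer, since $x, y, d^G, d^H$ are positive integer vectors.

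The only genuinely non-formal point is primitivity of $x\otimes y$, and this is where I expect the main obstacle. The plan is to argue by contradiction. Suppose a prime $p$ divides every $x_i y_j$. Since $\gcd(x)=1$, some $x_{i_0}$ is not divisible by $p$. Then $p \mid x_{i_0} y_j$ forces $p \mid y_j$ for every $j$, which contradicts $\gcd(y)=1$. More generally, the same argument shows $\gcd(x\otimes y)=\gcd(x)\gcd(y)$.

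Therefore $(\mathbf d,\mathbf r)$ is an arithmetical structure on $G\square H$. As a consistency check, taking $x=\mathbf 1$ and $y=\mathbf 1$ recovers the Laplacian structure. Also, for $H=P_2$ with the structure $d^H=(1,1)$, $y=(1,1)$, the construction reduces to part~1 of the Stacked Theorem~\ref{stac}.
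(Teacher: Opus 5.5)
Your proof is correct and follows essentially the same route as the paper: write $A_{G\square H}=A_G\otimes I+I\otimes A_H$ and apply the mixed-product property to $x\otimes y$. Your version is somewhat more complete. You check the identity $M_{G\square H}=M_G\otimes I+I\otimes M_H$ entrywise and give an actual proof that $\gcd(x\otimes y)=1$, both of which the paper only asserts, and you rightly omit the paper's rank/$M$-matrix step, which is unnecessary once the kernel equation, positivity, integrality and primitivity are established.
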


\begin{proof}
Recall that the adjacency matrix of the Cartesian product satisfies
\[
A_{G \square H} = A_G \otimes I + I \otimes A_H.
\]
Let ${\bf r} = x \otimes y$. Using properties of the Kronecker product, we compute:
\[
(A_G \otimes I)(x \otimes y) = (A_G x) \otimes y,
\]
\[
(I \otimes A_H)(x \otimes y) = x \otimes (A_H y).
\]
Since $(d^G, x)$ and $(d^H, y)$ are arithmetical structures, we have
\[
A_G x = d^G \circ x, \quad A_H y = d^H \circ y,
\]
where $\circ$ denotes componentwise multiplication.
Thus,
\[
A_{G \square H}(x \otimes y) = (A_G x) \otimes y +x \otimes (A_H y)
= (d^G \circ x) \otimes y + x \otimes (d^H \circ y).
\]
Now, define $d$ on $G \square H$ by $
d_{(i,j)} = d^G_i + d^H_j.$ Then
\begin{align*}
   ( \diag(d)r)_{ij} &= d_{ij}r_{ij} = (d_{G}(i) +d_{H}(j))x_i y_j\\
   & = (d_{G}(i) x_i) y_j+(d_{H}(j))x_i y_j\\
   &= (d_{G}(i) x_i) y_j+x_i(d_{H}(j) y_j\\
   &\implies (d_{G} \circ x)_i = d_{G}(i)x_i, \,\, (d_{H} \circ y)_j = d_{H}(j)y_j \\
   &= (diag(d) r)_{ij} = (d_G \circ x)_{i}y_j
+ (x_i(d_{H} \circ y))_j
\end{align*}
So, we have $$\operatorname{diag}({\bf d})(x \otimes y)= 
 (d^G \circ x)\otimes y + x \otimes (d^H \circ y).$$
Now, $A_{G \square H}(x \otimes y) =\operatorname{diag}({\bf d})(x \otimes y).$  Given that $M_{G \square H} := \operatorname{diag}({\bf d}) - A_{G \square H}$.
Therefore, we have
\[
M_{G \square H}(x \otimes y)
= \operatorname{diag}({\bf d})(x \otimes y) - A_{G \square H}(x \otimes y) = 0.
\]
Since $(x > 0), \,\,\gcd(x) =1$ and $(y > 0), \,\, \gcd(y)=1$, it follows that $(r = x \otimes y > 0)$ and $\gcd(r)=1$. Also, standard properties of Kronecker sums imply that
\[
\operatorname{rank}(M_{G \square H}) = |V(G)||V(H)| - 1,
\]
so, by Theorem~\ref{Theorem 3.2 from Corrales and Valencia}, $(M_{G \square H})$ is a singular irreducible $M$-matrix.
Hence, by Theorem~\ref{Theorem 3.2 from Corrales and Valencia}, $({\bf d}, {\bf r})$ is an arithmetical structure on $(G \square H)$.
\end{proof}

Now, using the above result, we can give another proof of Theorem~\ref{stac}.

\begin{theorem}
Let $P_2$ and $P_m$ be path graphs with $2$ and $m$ vertices, respectively. 
Suppose $(d_1, r_1)$ is an arithmetical structure on $P_m$. Then an arithmetical structure 
$({\bf d}, {\bf r})$ on the ladder graph $P_2 \square P_m$ is given by
\[
{\bf r} = \begin{pmatrix} r_1 \\ r_1 \end{pmatrix} \in \mathbb{Z}_{>0}^{2m}, 
\quad
{\bf d} = \begin{pmatrix} d_1 + \mathbf{1} \\ d_1 + \mathbf{1} \end{pmatrix} \in \mathbb{Z}_{>0}^{2m},
\]
where $\mathbf{1} = (1,1,\dots,1) \in \mathbb{Z}^m$.
Equivalently, if $M_{P_2 \square P_m} := \mathrm{diag}(d) - A(P_2 \square P_m),$
then $M_{P_2 \square P_m} \, r = 0.$
\end{theorem}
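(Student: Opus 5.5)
The plan is to obtain the statement as a direct specialization of the preceding theorem on Cartesian products via $M$-matrices, taking $G = P_2$ and $H = P_m$.

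First, fix the arithmetical structure on $P_2$ given by $x = (1,1)^T$ and $d^{P_2} = (1,1)^T$. Since $A(P_2) = \begin{pmatrix}0&1\\1&0\end{pmatrix}$, we have $A(P_2)x = x = d^{P_2}\circ x$. Moreover $x$ is positive and primitive, so this is an arithmetical structure on $P_2$. Take $(d^H, y) = (d_1, r_1)$ on $P_m$.

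The previous theorem then gives the product structure with $\mathbf r = x\otimes y$ and $d_{(i,j)} = d^{P_2}_i + (d_1)_j = 1 + (d_1)_j$. To match the stated vectors we need to check the vertex ordering. The ladder is ordered $(1,1),\dots,(1,m),(2,1),\dots,(2,m)$, which is exactly the Kronecker ordering of $V(P_2)\times V(P_m)$. This is consistent with the block form $A(P_2\square P_m) = A(P_2)\otimes I_m + I_2\otimes A(P_m)$ recorded in the preliminaries. In this ordering:
\[
x\otimes r_1 = \begin{pmatrix} r_1\\ r_1\end{pmatrix}, \qquad \mathbf d = \begin{pmatrix} d_1+\mathbf 1\\ d_1+\mathbf 1\end{pmatrix}.
\]
Also, $M_{P_2\square P_m} = M_{P_2}\otimes I_m + I_2\otimes M_{P_m}$, and this annihilates $\mathbf r$.

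It remains to confirm the side conditions. Positivity and integrality of $\mathbf d$ are immediate from $d_1\in\mathbb Z_{>0}^m$. For primitivity, $\gcd(\mathbf r) = \gcd(r_1) = 1$.

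As an independent sanity check that avoids the rank claim in the general theorem, I would verify the two block equations directly, exactly as in part 1 of Theorem~\ref{stac}:
\[
(\operatorname{diag}(d_1+\mathbf 1) - A(P_m))\,r_1 = r_1 .
\]
This holds because $\operatorname{diag}(d_1)r_1 = A(P_m)r_1$, and the off-diagonal blocks $-I_m$ contribute $-r_1$, so each block row vanishes.

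The only step I expect to need care is the bookkeeping of the vertex ordering against the Kronecker convention. If the orderings did not match, the equality of $x\otimes r_1$ with $\begin{pmatrix} r_1\\ r_1\end{pmatrix}$ would fail up to permutation, and the permutation-invariance proposition would then be invoked. Everything else is routine verification.
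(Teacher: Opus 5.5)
Your proposal is correct and follows the paper's route. The paper likewise takes the structure $x=(1,1)$, $d_{P_2}=(1,1)$ on $P_2$, sets $\mathbf r = x\otimes r_1$ and $d(i,j)=1+d_{1,j}$, and checks $A(P_2\square P_m)\,\mathbf r=\operatorname{diag}(\mathbf d)\,\mathbf r$ via the Kronecker identities, which amounts to re-running the general product theorem in this special case. Your additional checks (vertex ordering, $\gcd(\mathbf r)=\gcd(r_1)=1$, and the direct block verification) are sound, and they make your write-up tidier than the paper's, which loosely writes $\mathbf 1\otimes r_1$ as $r_1$.
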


\begin{proof}
Consider the arithmetical structure on $P_2$, which is unique:
\[
x = (1,1), \quad d_{P_2} = (1,1).
\]
Let $(d_1, r_1)$ be any arithmetical structure on $P_m$, with adjacency $A(P_m)$ and 
\[
M_{P_m} := \mathrm{diag}(d_1) - A(P_m), \quad M_{P_m} r_1 = 0.
\]

Using the Cartesian product construction for graphs, define
\[
r = x \otimes r_1 = \begin{pmatrix} r_1 \\ r_1 \end{pmatrix}, 
\quad 
d(i,j) = d_{P_2,i} + d_{1,j} = 1 + d_{1,j}.
\]

The adjacency matrix of $P_2 \square P_m$ satisfies
\[
A(P_2 \square P_m) = A(P_2) \otimes I_m + I_2 \otimes A(P_m),
\]
where $I_m$ is the $m \times m$ identity matrix. Now, compute
\[
A(P_2 \square P_m) \, r = (A(P_2) \otimes I_m)(x \otimes r_1) + (I_2 \otimes A(P_m))(x \otimes r_1).
\]
Using properties of Kronecker products we have
\[
(A(P_2) \otimes I_m)(x \otimes r_1) = (A(P_2)x) \otimes r_1 = d_{P_2} \circ x \otimes r_1 = \mathbf{1} \otimes r_1 = r_1,
\]
\[
(I_2 \otimes A(P_m))(x \otimes r_1) = x \otimes (A(P_m) r_1) = x \otimes (d_1 \circ r_1) = r_1.
\]
Hence, $A(P_2 \square P_m) \, {\bf r} = \mathrm{diag}({\bf d}) \, {\bf r},$
so
\[
M_{P_2 \square P_m}  {\bf r} = (\mathrm{diag}(d) - A(P_2 \square P_m))  {\bf r} = 0.
\]
Since $r_1 > 0$, it follows that ${\bf r} > 0$, and ${\bf d} > 0$. Therefore, $({\bf d}, {\bf r})$ is an arithmetical structure on $P_2 \square P_m$, as claimed.
\end{proof}

\begin{theorem}
Let $G = P_2 \square P_m$. Let $\mathbf{r}_1 \in \mathbb{Z}_{>0}^m$, and let $D_1, D_2$ be diagonal matrices with positive integer entries. Define ${\bf d} \in \mathbb{Z}_{>0}^{2m}$ by
\[
\operatorname{diag}({\bf d}) =
\begin{pmatrix}
D_1 & 0 \\
0 & D_2
\end{pmatrix}.
\]
For an integer $k \ge 0$, define
\[
\mathbf{r} =
\begin{pmatrix}
\mathbf{r}_1 \\
\mathbf{r}_1 + k\mathbf{1}
\end{pmatrix}
\in \mathbb{Z}_{>0}^{2m},
\]
where $\mathbf{1} = (1, \dots, 1)$. Suppose that $r \in \mathbb{Z}_{>0}^{2m}$  and $\gcd(\mathbf{r})=1$.
Then $(d,\mathbf{r})$ defines an arithmetical structure on $G$ if and only if
$$(D_1 - A(P_m) - I)\mathbf{r}_1 = k\mathbf{1}, \,\,\, (D_2 - A(P_m))(\mathbf{r}_1 + k\mathbf{1}) = \mathbf{r}_1.
$$
In particular, if such a $k$ exists, then it is uniquely determined.
\end{theorem}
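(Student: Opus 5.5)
The plan is to write the arithmetical condition in block form, as in the proof of Theorem~\ref{stac}. With the row-wise ordering the adjacency matrix is
\[
A(P_2\square P_m)=\begin{pmatrix} A(P_m) & I_m\\ I_m & A(P_m)\end{pmatrix},
\]
so $(\operatorname{diag}(\mathbf d)-A)\mathbf r=0$ is equivalent to the pair of equations
\[
(D_1-A(P_m))\mathbf r_1=\mathbf r_2,\qquad (D_2-A(P_m))\mathbf r_2=\mathbf r_1 .
\]
Here $\mathbf r_2=\mathbf r_1+k\mathbf 1$.

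For the forward direction, I will substitute $\mathbf r_2=\mathbf r_1+k\mathbf 1$ into the first block equation. Moving $\mathbf r_1$ to the left gives $(D_1-A(P_m)-I)\mathbf r_1=k\mathbf 1$. The second block equation is already the second stated condition, with $\mathbf r_1+k\mathbf 1$ in place of $\mathbf r_2$.

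For the converse, I will read the same algebra backwards. The two displayed conditions reconstruct both block equations, so $(\operatorname{diag}(\mathbf d)-A)\mathbf r=0$ holds. The hypotheses already give $\mathbf r\in\mathbb Z_{>0}^{2m}$ and $\gcd(\mathbf r)=1$, and $\mathbf d$ has positive integer entries by assumption. Hence $(\mathbf d,\mathbf r)$ is an arithmetical structure by definition. No appeal to Theorem~\ref{Theorem 3.2 from Corrales and Valencia} is needed, since positivity of $\mathbf r$ is given.

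For uniqueness, I will observe that the first condition forces every coordinate of $(D_1-A(P_m)-I)\mathbf r_1$ to equal the same number $k$. Given $\mathbf r_1$ and $D_1$, this vector is fixed, so $k$ is recovered as any one coordinate, for instance
\[
k=(d_{1,1}-1)r_{1,1}-r_{1,2}.
\]
Equivalently, $k=r_{2,i}-r_{1,i}$ for any $i$, since $\mathbf r_2-\mathbf r_1=k\mathbf 1$. I do not expect a genuine obstacle. The only delicate point is interpretation: "uniquely determined" must mean relative to the fixed data $\mathbf r_1,D_1$, or relative to $\mathbf r$ itself, and I will state it in that form. Under either reading the first condition, or the difference of the two halves of $\mathbf r$, pins down $k$.
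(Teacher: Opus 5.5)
Your proposal is correct and follows essentially the same route as the paper: both write $\operatorname{diag}(\mathbf d)-A(G)$ in $2\times 2$ block form for the row-wise ordering, substitute $\mathbf r_2=\mathbf r_1+k\mathbf 1$ into the two block equations, and get uniqueness of $k$ from the fact that $(D_1-A(P_m)-I)\mathbf r_1$ must equal the constant vector $k\mathbf 1$. Your remark that positivity and primitivity of $\mathbf r$ and integrality of $\mathbf d$ are hypotheses, so the converse needs no $M$-matrix theorem, only makes explicit what the paper leaves implicit.
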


\begin{proof}
Let
\[
\mathbf{r} =
\begin{pmatrix}
\mathbf{r}_1 \\
\mathbf{r}_1 + k\mathbf{1}
\end{pmatrix}.
\]
Recall that
\[
M(G) = \operatorname{diag}(\mathbf{d}) - A(G) =
\begin{pmatrix}
D_1 - A(P_m) & -I_m \\
-I_m & D_2 - A(P_m)
\end{pmatrix}.
\]
Now,  the first block of $M(G)\mathbf{r}$ gives
\[
(D_1 - A(P_m))\mathbf{r}_1 - (\mathbf{r}_1 + k\mathbf{1})
= (D_1 - A(P_m) - I)\mathbf{r}_1 - k\mathbf{1}.
\]
Thus, the first block vanishes if and only if 
\begin{equation}\label{eq1}
    (D_1 - A(P_m) - I)\mathbf{r}_1 = k\mathbf{1}.
\end{equation}
Similarly, the second block of  $M(G)\mathbf{r}$ gives
\[
(D_2 - A(P_m))(\mathbf{r}_1 + k\mathbf{1}) - \mathbf{r}_1.
\]
Thus, the second block vanishes if and only if
\begin{equation}\label{eq2}
 (D_2 - A(P_m))(\mathbf{r}_1 + k\mathbf{1}) = \mathbf{r}_1. 
\end{equation}
Therefore, $M(G)\mathbf{r} = \mathbf{0}$ if and only if both conditions (\ref{eq1}) and (\ref{eq2}) hold.
Finally, from condition (\ref{eq1}), the vector $(D_1 - A(P_m) - I)\mathbf{r}_1$ is constant, and its common value is $k$. Hence $k$ is uniquely determined.
\end{proof}

\begin{theorem} \label{nonsym}[Conditional Construction of Non-Symmetric Arithmetical Structures on $P_2 \square P_m$]
Let $G = P_2 \square P_m$ be the ladder graph with $m \ge 2$, and impose boundary conditions $
a_0 = b_0 = a_{m+1} = b_{m+1} = 0.$ We label the vertices of $P_2 \square P_m$ as $
(1,1),\dots,(1,m),(2,1),\dots,(2,m),$ 
and define $${\bf r} = (a_1,\dots,a_m, b_1,\dots,b_m).$$
Let $(a_i)_{i=1}^m$ and $(b_i)_{i=1}^m$ be sequences of positive integers with $a_i= r_{1,i}, \, b_{i} = r_{2,i}$ such that for all $i = 1,\dots,m$,
\[
a_i \mid (a_{i-1} + a_{i+1} + b_i), \qquad
b_i \mid (b_{i-1} + b_{i+1} + a_i).
\]
Define
\[
d_{1,i} = \frac{a_{i-1} + a_{i+1} + b_i}{a_i}, \qquad
d_{2,i} = \frac{b_{i-1} + b_{i+1} + a_i}{b_i}.
\]
Let
\[
{\bf r} = (a_1,\dots,a_m, b_1,\dots,b_m), \qquad
{\bf d} = (d_{1,1},\dots,d_{1,m}, d_{2,1},\dots,d_{2,m}).
\] If ${\bf r}$ is primitive, 
then $({\bf d}, {\bf r})$ is an arithmetical structure on $G$, i.e.,
$\operatorname{diag}(d)r = A(G)r.$
Moreover, if $(a_1,\dots,a_m) \neq (b_1,\dots,b_m)$, then the structure is non-symmetric.
\end{theorem}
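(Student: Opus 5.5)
The plan is a direct verification: I will check the defining equation $(\operatorname{diag}(\mathbf d)-A(G))\mathbf r=0$ vertex by vertex, using the block form of the adjacency matrix for the row-wise ordering,
\[
A(P_2\square P_m)=\begin{pmatrix} A(P_m) & I_m\\ I_m & A(P_m)\end{pmatrix},
\]
exactly as in the proof of Theorem~\ref{stac}.

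First, I compute $A(G)\mathbf r$ blockwise. For the top block, the $i$-th entry of $A(P_m)(a_1,\dots,a_m)^T+(b_1,\dots,b_m)^T$ equals $a_{i-1}+a_{i+1}+b_i$. The boundary convention $a_0=a_{m+1}=0$ takes care of the endpoints $i=1$ and $i=m$, where only one path neighbour exists. Similarly, the $i$-th entry of the bottom block is $b_{i-1}+b_{i+1}+a_i$, with $b_0=b_{m+1}=0$.

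Next, I bring in the divisibility hypotheses. They guarantee that $d_{1,i}$ and $d_{2,i}$ are integers, and they are positive because numerators and denominators are positive: for instance, $b_i>0$ already makes the numerator of $d_{1,i}$ positive. By construction, $d_{1,i}a_i$ equals the $i$-th top entry and $d_{2,i}b_i$ equals the $i$-th bottom entry. Hence $\operatorname{diag}(\mathbf d)\mathbf r=A(G)\mathbf r$ holds entrywise.

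Together with the positivity of $\mathbf r$ and its assumed primitivity, this meets the definition of an arithmetical structure. If one also wants the $M$-matrix viewpoint, $\operatorname{diag}(\mathbf d)-A(G)$ is a $Z$-matrix with a positive kernel vector. Since $G$ is connected, the matrix is irreducible, so Theorem~\ref{Theorem 3.2 from Corrales and Valencia} makes it an almost non-singular $M$-matrix with determinant $0$. This step is not strictly needed.

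Finally, for the "non-symmetric" clause, I use the convention of the earlier remark: a structure is symmetric when $r(1,i)=r(2,i)$ for all $i$. If $(a_i)\neq(b_i)$, some $i$ has $r_{1,i}\neq r_{2,i}$. Then the structure is not symmetric and does not come from the bijection with $\mathcal A(P_m)$.

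The only point needing care is the endpoint bookkeeping, i.e. checking that the convention $a_0=a_{m+1}=b_0=b_{m+1}=0$ reproduces the corner equations listed earlier for $i=1,m$. I do not expect any real obstacle beyond that.
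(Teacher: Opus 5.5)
Your proposal is correct and follows essentially the same route as the paper: use the boundary convention to compute $(A(G)\mathbf r)_{1,i}=a_{i-1}+a_{i+1}+b_i$ and $(A(G)\mathbf r)_{2,i}=b_{i-1}+b_{i+1}+a_i$, then note that the divisibility hypotheses make $d_{1,i},d_{2,i}$ positive integers satisfying $\operatorname{diag}(\mathbf d)\mathbf r=A(G)\mathbf r$ by construction. Your explicit positivity check for $\mathbf d$ and the optional $M$-matrix remark are harmless additions; the paper only asserts the former and omits the latter.
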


\begin{proof}
By assumption,
\[
a_i \mid (a_{i-1} + a_{i+1} + b_i), \qquad
b_i \mid (b_{i-1} + b_{i+1} + a_i),
\]
so there exist integers $d_{1,i}, d_{2,i} \in \mathbb{Z}_{>0}$ such that
\[
d_{1,i} a_i = a_{i-1} + a_{i+1} + b_i, \qquad
d_{2,i} b_i = b_{i-1} + b_{i+1} + a_i.
\]

In the ladder graph $P_2 \square P_m$, each vertex $(1,i)$ is adjacent to $(1,i-1)$, $(1,i+1)$, and $(2,i)$, while each vertex $(2,i)$ is adjacent to $(2,i-1)$, $(2,i+1)$, and $(1,i)$. Using the boundary conditions $a_0 = b_0 = a_{m+1} = b_{m+1} = 0$, these relations hold uniformly for all $i = 1,\dots,m$. Therefore, we have
\[
(A(G)r)_{1,i} = a_{i-1} + a_{i+1} + b_i, \qquad
(A(G)r)_{2,i} = b_{i-1} + b_{i+1} + a_i.
\]

Comparing with the above expressions, we obtain
\[
d_{1,i} a_i = (A(G)r)_{1,i}, \qquad
d_{2,i} b_i = (A(G)r)_{2,i},
\]
and hence
\[
\operatorname{diag}({\bf d}){\bf r} = A(G){\bf r}.
\]
Thus $({\bf d}, {\bf r})$ is an arithmetical structure on $G$. Finally, if $(a_1,\dots,a_m) \neq (b_1,\dots,b_m)$, then the two layers differ, and the structure is non-symmetric.
\end{proof}


Now, we illustrate the above theorem with the following examples.

\begin{exam}
Consider the ladder graph $P_2 \square P_3$ with boundary conditions:
$
a_0 = b_0 = a_4 = b_4 = 0$.
Choose a non-symmetric first column:
\[
(a_1, b_1) = (2,1).
\]
Compute the second column using arithmetical structure equations:
\[
d_{2,1} = \frac{b_0 + b_2 + a_1}{b_1} = \frac{0 + b_2 + 2}{1} = b_2 + 2
\]
Choose $b_2 = 1 \implies d_{2,1} = 3$

\[
d_{1,1} = \frac{a_0 + a_2 + b_1}{a_1} = \frac{0 + a_2 + 1}{2} \implies a_2 = 1 \implies d_{1,1} = 1.
\]

Now, we have the third column
\[
d_{2,2} = \frac{b_1 + b_3 + a_2}{b_2} = \frac{1 + b_3 + 1}{1} = b_3 + 2.
\]
Choose $b_3 = 1 \implies d_{2,2} = 3$.

\[
d_{1,2} = \frac{a_1 + a_3 + b_2}{a_2} = \frac{2 + a_3 + 1}{1} = a_3 + 3
\]
Choose $a_3 = 1 \implies d_{1,2} = 4.$
Now, the last column degree becomes
\[
d_{1,3} = \frac{a_2 + a_4 + b_3}{a_3} = \frac{1 + 0 + 1}{1} = 2, \quad
d_{2,3} = \frac{b_2 + b_4 + a_3}{b_3} = \frac{1 + 0 + 1}{1} = 2
\]

Thus we have 
\[
\begin{array}{c|c|c|c|c}
i & a_i & b_i & d_{1,i} & d_{2,i} \\
\hline
1 & 2 & 1 & 1 & 3 \\
2 & 1 & 1 & 4 & 3 \\
3 & 1 & 1 & 2 & 2
\end{array}.
\]

Note that column 1 is non-symmetric ($a_1 \neq b_1$). All degrees $d_{1,i}, d_{2,i}$ are positive integers. Further, equations
\[
d_{1,i} a_i = a_{i-1} + a_{i+1} + b_i, \quad
d_{2,i} b_i = b_{i-1} + b_{i+1} + a_i
\]
are satisfied for all $i$. By Theorem~\ref{nonsym}, we have ${\bf r}= (2, 1, 1, 1, 1, 1)$ with $\gcd({\bf r}) = 1$ and ${\bf d} = (1, 4, 2, 3, 3, 2).$
Thus, $({\bf d}, {\bf r})$ satisfies $\operatorname{diag}({\bf d}){\bf r} = A(G){\bf r}$, where adjacency matrix $A(G)$ is row-wise ordering,  and hence is a valid non-symmetric arithmetical structure on $P_2 \square P_3$.
\end{exam}

\begin{exam}
Consider the ladder graph $P_2 \square P_4$ with boundary conditions
$a_0 = b_0 = a_5 = b_5 = 0.$
Let the vertex labels be given by
\[
(a_1,b_1) = (2,1), \quad (a_2,b_2) = (1,1), \quad (a_3,b_3) = (1,1), \quad (a_4,b_4) = (1,1).
\]
Using the arithmetical structure relations
\[
d_{1,i} = \frac{a_{i-1} + a_{i+1} + b_i}{a_i}, \qquad
d_{2,i} = \frac{b_{i-1} + b_{i+1} + a_i}{b_i},
\]
we compute:
\[
(d_{1,1}, d_{2,1}) = (1,3), \quad
(d_{1,2}, d_{2,2}) = (4,3), \quad
(d_{1,3}, d_{2,3}) = (4,3), \quad
(d_{1,4}, d_{2,4}) = (2,2).
\]
Thus,
$$
{\bf r} = (2,1,1,1,1,1,1,1), \,\,\,\,
{\bf d} = (1,4,4,2,3,3,3,2), $$
and $\gcd({\bf r})=1$. Since $a_1 \neq b_1$, the resulting arithmetical structure is non-symmetric.
\end{exam}

\begin{remark}
The converse of the Theorem~\ref{nonsym} also holds: if $(d,r)$ is an arithmetical structure on $P_2 \square P_m$, then the sequences $(a_i)$ and $(b_i)$ obtained from $r$ satisfy
\[
a_i \mid (a_{i-1} + a_{i+1} + b_i), \qquad
b_i \mid (b_{i-1} + b_{i+1} + a_i)
\]
for all $i=1,\dots,m$, where we adopt the boundary convention $a_0 = b_0 = a_{m+1} = b_{m+1} = 0$. This follows directly from the Laplacian equations
\[
d(v)\,r(v) = \sum_{u \sim v} r(u),
\]
and hence the theorem provides a complete characterization of arithmetical structures on $P_2 \square P_m$.
\end{remark}

Note that the graph $P_2 \square P_m$ can be viewed as a chain of $m-1$ copies of the cycle graph $C_4$, glued along common edges. This perspective allows one to interpret arithmetical structures on $P_2 \square P_m$ as arising from compatible assignments on consecutive $C_4$ subgraphs. In particular, local constraints on each $C_4$ must agree on shared vertices, leading to a global system of relations.

Moreover, imposing the symmetry condition $r(1,i)=r(2,i)$ reduces the problem to arithmetical structures on $P_m$, yielding a subclass enumerated by the Catalan numbers. However, the presence of multiple $C_4$ components introduces additional degrees of freedom, giving rise to non-symmetric arithmetical structures beyond this subclass.

\section{Arithmetical Structures on $P_2 \square P_m$ via $C_4$-Decomposition}
We view the graph $P_2 \square P_m$ as a chain of $m-1$ copies of $C_4$, where consecutive copies share a common edge.

\begin{lemma}
The graph $P_2 \square P_m$ can be decomposed into $m-1$ subgraphs, each isomorphic to $C_4$, such that consecutive $C_4$'s share exactly one edge.
\end{lemma}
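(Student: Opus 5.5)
The plan is an explicit construction: write down the $m-1$ subgraphs directly and check the required properties against the definition of the Cartesian product. We use the vertex labelling $(1,i),(2,i)$, $1\le i\le m$, fixed earlier. For each $1\le i\le m-1$ we define $Q_i$ as the subgraph with vertex set $\{(1,i),(2,i),(1,i+1),(2,i+1)\}$ and the four edges
\[
(1,i)(1,i+1),\quad (1,i+1)(2,i+1),\quad (2,i+1)(2,i),\quad (2,i)(1,i).
\]
By the definition of $P_2\square P_m$ these are genuine edges: two are horizontal edges coming from the edge $i(i+1)$ of $P_m$, and two are vertical rungs coming from the edge of $P_2$. They close up into the 4-cycle $(1,i)\to(1,i+1)\to(2,i+1)\to(2,i)\to(1,i)$, so $Q_i\cong C_4$.

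Next we check that the $Q_i$ cover $G$. Every vertex $(a,j)$ lies in $Q_j$ or $Q_{j-1}$. Every horizontal edge $(a,i)(a,i+1)$ lies in $Q_i$. Every rung $(1,j)(2,j)$ lies in $Q_j$ when $j\le m-1$, and in $Q_{m-1}$ when $j=m$.

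Then we determine the overlaps. $Q_i\cap Q_{i+1}$ has vertex set $\{(1,i+1),(2,i+1)\}$. Its only common edge is the rung $(1,i+1)(2,i+1)$, because the horizontal edges of $Q_i$ join columns $i,i+1$ while those of $Q_{i+1}$ join columns $i+1,i+2$. So consecutive copies share exactly one edge. For $|i-k|\ge 2$, the column sets $\{i,i+1\}$ and $\{k,k+1\}$ are disjoint, so $Q_i$ and $Q_k$ share nothing. This is worth stating, since the paper speaks of a ``chain.''

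The only delicate point is what ``decomposed'' means. The $Q_i$ are not edge-disjoint: the interior rungs are each counted twice. We will therefore read the statement as saying that $G=\bigcup_{i=1}^{m-1}Q_i$ as a union of subgraphs. As a consistency check, edge counting gives $4(m-1)-(m-2)=3m-2=|E(G)|$, since $G$ has $2(m-1)$ horizontal edges and $m$ rungs. This confirms that the only overlaps are the $m-2$ shared rungs, and it is the step where care is needed. Everything else is routine verification.
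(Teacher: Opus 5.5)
Your proposal is correct and uses the same construction as the paper: the $m-1$ squares on columns $i,i+1$, with consecutive squares sharing the rung $(1,i+1)(2,i+1)$. Your extra checks are details the paper's two-line proof leaves implicit: that the squares cover $G$, that non-consecutive squares are disjoint, the edge count $4(m-1)-(m-2)=3m-2$, and the remark that ``decomposed'' must mean a union of subgraphs rather than an edge-disjoint partition, since interior rungs are shared.
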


\begin{proof}
Label the vertices of $P_2 \square P_m$ as $(1,i)$ and $(2,i)$ for $i=1,\dots,m$. 
For each $i=1,\dots,m-1$, the vertices
\[
(1,i), (1,i+1), (2,i+1), (2,i)
\]
form a 4-cycle. These cycles share the edge between $(1,i+1)$ and $(2,i+1)$ with the next cycle, forming a chain.
\end{proof}



\begin{proposition}
Let $(\mathbf{d},\mathbf{r})$ be an arithmetical structure on $C_4$ and $ d_1r_1 = d_2r_2$. Then the opposite vertex labels satisfy $r_1 + r_3 = r_2 + r_4.$
\end{proposition}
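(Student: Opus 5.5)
The plan is to write out the four vertex equations of the arithmetical structure on $C_4$ and compare the first two directly. Label the vertices of $C_4$ cyclically as $1,2,3,4$, so that vertex $1$ is adjacent to $2$ and $4$, vertex $2$ to $1$ and $3$, and so on. The pairs of opposite vertices are then $\{1,3\}$ and $\{2,4\}$.

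The defining relation $(\operatorname{diag}(\mathbf d)-A)\mathbf r=0$ gives the system
\begin{align*}
d_1 r_1 &= r_2 + r_4, & d_2 r_2 &= r_1 + r_3,\\
d_3 r_3 &= r_2 + r_4, & d_4 r_4 &= r_1 + r_3 .
\end{align*}
This is the same local computation used in the proof of Theorem~\ref{nonsym}, specialized to a $4$-cycle in which every vertex has exactly two neighbours.

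Next I substitute the hypothesis $d_1r_1=d_2r_2$. Replacing each side by the corresponding neighbour sum from the first row of the system gives $r_2+r_4=r_1+r_3$ at once, which is the claim.

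There is no real obstacle here. The only point needing care is the labelling convention: the proposition is meaningful only when vertices $1$ and $2$ are adjacent, so that their neighbourhoods are exactly the two opposite pairs. I will fix that convention explicitly at the start. I will also note in passing that the system shows $d_1r_1=d_3r_3$ and $d_2r_2=d_4r_4$ always hold, since opposite vertices share the same neighbourhood. Consequently the hypothesis is equivalent to all four products $d_ir_i$ being equal.
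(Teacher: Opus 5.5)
Your proposal is correct and follows essentially the same argument as the paper: write the four vertex equations of $C_4$ and equate the neighbour sums $r_2+r_4$ and $r_1+r_3$ via the hypothesis $d_1r_1=d_2r_2$. The paper makes the same side observation that $d_1r_1=d_3r_3$ and $d_2r_2=d_4r_4$.
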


\begin{proof}
For $C_4$, the Laplacian equations are
\[
d_1r_1=r_2+r_4,\qquad d_2r_2=r_1+r_3,
\]
\[
d_3r_3=r_2+r_4,\qquad d_4r_4=r_1+r_3.
\]
In particular, $
d_1r_1=d_3r_3
\quad\text{and}\quad
d_2r_2=d_4r_4.$
If $d_1r_1=d_2r_2$, then using the vertex equations at $1$ and $2$, we obtain $
r_2+r_4=d_1r_1=d_2r_2=r_1+r_3.$
Hence,
\[
r_1+r_3=r_2+r_4.
\]
\end{proof}

\begin{corollary}
Let $(d,r)$ be an arithmetical structure on $P_2 \square P_m$ such that $
r(1,i) = r(2,i) \quad \text{for all } i.$
Then $(d,r)$ is in bijection with an arithmetical structure on $P_m$. 
In particular, the number of such symmetric structures is the Catalan number $C_{m-1}$.
\end{corollary}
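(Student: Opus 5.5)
The plan is to write down the bijection explicitly and check it in both directions, using the vertex equations as in Theorem~\ref{nonsym} and the symmetric construction of Theorem~\ref{stac}(1).

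\textbf{Forward map.} Start from an arithmetical structure $(\mathbf d,\mathbf r)$ on $P_2 \square P_m$ with $r(1,i)=r(2,i)=x_i$ for all $i$. The vertex equation at $(1,i)$ reads
\[
d_{1,i}x_i = x_{i-1}+x_{i+1}+x_i,
\]
with the convention $x_0=x_{m+1}=0$. Hence $x_i \mid x_{i-1}+x_{i+1}$, and $\delta_i := d_{1,i}-1$ is an integer satisfying $\delta_i x_i = x_{i-1}+x_{i+1}$. The equation at $(2,i)$ is identical, so $d_{2,i}=d_{1,i}$.

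\textbf{Well-definedness of the forward map.} I need three facts.
\begin{itemize}
\item $\delta_i>0$: since $m\ge 2$, every $i$ has at least one neighbour on the path, so $x_{i-1}+x_{i+1}>0$.
\item $x$ is primitive: $\gcd(x)=\gcd(\mathbf r)=1$.
\item $x>0$: this is immediate since $\mathbf r>0$.
\end{itemize}
So $(\delta,x)$ is an arithmetical structure on $P_m$.

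\textbf{Inverse map.} This is exactly Theorem~\ref{stac}(1): send $(d_1,r_1)$ on $P_m$ to $\mathbf r=(r_1,r_1)$ and $\mathbf d=(d_1+\mathbf 1,d_1+\mathbf 1)$. Primitivity of $\mathbf r$ follows from that of $r_1$. Composing the two maps in either order gives the identity, since $\mathbf d$ and $\mathbf r$ determine each other. This gives the bijection.

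\textbf{Counting.} With the bijection in hand, the number of symmetric structures equals $|\mathcal A(P_m)|$. I will cite the known enumeration of arithmetical structures on paths by Catalan numbers \cite{braun2017arithmetical}, namely $|\mathcal A(P_m)|=C_{m-1}$.

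The only step needing care is positivity of $\delta_i$. At an endpoint, say $i=1$, this requires $m\ge 2$ so that $x_2$ exists; this is exactly why the hypothesis $m\ge 2$ matters. I expect no other obstacle.
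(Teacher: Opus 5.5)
Your proposal is correct and follows the paper's argument. You set $x_i=r(1,i)=r(2,i)$, strip the rung contribution to get $d'_i=d(1,i)-1$ with $d'_i x_i=x_{i-1}+x_{i+1}$, invert by duplicating $x$ and adding $1$ to the degrees, and then quote the Catalan count for paths. Your extra checks are details the paper leaves implicit and they make the bijection airtight: $d(1,i)-1>0$ because $m\ge 2$, $d_{2,i}=d_{1,i}$, and primitivity holds in both directions.
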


\begin{proof}
Set $x_i = r(1,i) = r(2,i).$
Then the defining equations of the arithmetical structure give
\[
d(1,i)\,x_i = x_{i-1} + x_{i+1} + x_i.
\]
Rewriting, we obtain
\[
(d(1,i) - 1)x_i = x_{i-1} + x_{i+1}.
\]
Let $d'_i = d(1,i) - 1$. Then
\[
d'_i x_i = x_{i-1} + x_{i+1},
\]
which is precisely the defining condition for an arithmetical structure on $P_m$. 

Conversely, given an arithmetical structure $(d', x)$ on $P_m$, define
\[
r(1,i) = r(2,i) = x_i, \quad d(1,i) = d(2,i) = d'_i + 1.
\]
Then this defines an arithmetical structure on $P_2 \square P_m$. Hence, the correspondence is bijective.
\end{proof}

We encode arithmetical structures on $P_2 \square P_m$ as admissible sequences of states and apply a transfer-matrix method to obtain a new framework for enumerating arithmetical structures on ladder graphs.

\subsection{A Tranisition/Compatibility-Matrix Approach with Integer States}
Let $G = P_n \square P_m$, where $P_n$ and $P_m$ are path graphs.
We view $G$ as an $n \times m$ grid with vertices $(i,j)$, where
$1 \le i \le n$ and $1 \le j \le m.$

Each pair of consecutive columns $j$ and $j+1$ induces a $4$-cycle
\[
(i,j),\ (i,j+1),\ (i+1,j+1),\ (i+1,j),
\]
for $i = 1, \dots, n-1$, which is isomorphic to $(C_4)$.
Thus, any arithmetical structure on $G$ must restrict to an arithmetical structure on each such $C_4$.

The Laplacian conditions defining an arithmetical structure on $P_2 \square P_m$ involve only adjacent columns. This locality suggests encoding each column by a state and studying how these states evolve along the graph. By interpreting the compatibility between consecutive columns as admissible transitions, arithmetical structures can be viewed as sequences of states satisfying local constraints. This leads naturally to a description in terms of walks in a finite directed graph.

\begin{definition}[State space]
For each $1 \le i \le m$, define the state
$$S_i = (r_{1,i}, r_{2,i}) \in \mathbb{Z}_{>0}^2 \text{   such that        }\gcd(r_{1,i}, r_{2,i}) = 1.
$$
\end{definition}

\begin{definition}[Admissible transitions]
Let $S$ be the set of all primitive pairs that occur as $(r_{1,i}, r_{2,i})$ in some arithmetical structure on $C_4$. Let $(a, b), (c, d) \in S$. We say that $(a,b)$ is  admissible  (or compatible) to $(c,d)$  if there exists an arithmetical structure on the 4-cycle with vertices $
(1,i), (1,i+1), (2,i+1), (2,i)$
(in cyclic order) such that $$(r_{1,i}, r_{2,i}) = (a,b), \quad (r_{1,i+1}, r_{2,i+1}) = (c,d).$$ 
\end{definition}


\begin{definition}[Transition Matrix/Compatibility matrix]
Define the matrix $T = (t_{(a,b),(c,d)})$ indexed by $S$ by
\[
t_{(a,b),(c,d)} =
\begin{cases}
1, & \text{if } (a, b) \rightarrow (c, d) \text{   is admissible}, \\
0, & \text{otherwise}.
\end{cases}
\]
Then $T^{m-1}$ counts all valid walks of length $m-1$.
\end{definition}


We associate to the state space $S$ a directed graph whose vertex set is $S$,  and where there is a directed edge from $(a,b)\in S$ to $(c,d)\in S$  if and only if the transition $(a,b)\leftrightarrow(c,d)$ is admissible, i.e., they can occur as consecutive columns in an arithmetical structure on the $4$-cycle. By construction, the transition matrix $T$ is the adjacency matrix of this directed graph. For example, 


\begin{exam}
Consider the ladder graph $P_2 \square P_4$. Each column $i$ ($1 \le i \le 4$) is represented by a state $
S_i = (r(1,i), r(2,i)).$
For illustration, consider two consecutive states $
S_2 = (r_{12}, r_{22})= (2,1), \, S_3 =(r_{13}, r_{23})= (1,2).$ 
To determine whether the transition $S_2 \rightarrow S_3$ is admissible, we consider the $4$-cycle formed by the vertices $(1,2), (2, 2), (1,3), (2,3).$
The Laplacian conditions on this cycle impose compatibility relations between the values in the two columns. If these conditions are satisfied, then the transition is allowed. 
In this case, we draw a directed edge from $S_2= (2, 1)$ to $S_3=(1, 2)$ in the state graph. Repeating this process for all possible pairs of states in $P_2 \square P_4$ produces a finite directed graph, where
\begin{itemize}
\item vertices are all admissible states,
\item directed edges represent valid transitions between consecutive columns.
\end{itemize}
An arithmetical structure on $P_2 \square P_4$ is then equivalent to a walk of length $3$ in this directed graph.
\end{exam}

\begin{theorem}\label{tmthm}
Let $G = P_2 \square P_m$, and let $S$ be a finite set of admissible states, where each state is $S_i = (r(1,i), r(2,i))$. Let $r(1,i)| r(1,i+1)$ and $r(2,i)| r(2,i+1)$, for $1<i< m$. Then the arithmetical structures on $G$ are in one-to-one correspondence with admissible sequences of states $(S_1, S_2, \dots, S_m)$, where each $S_i \in S$ and each transition $S_i \to S_{i+1}$ is admissible.

Consequently, the number of arithmetical structures on $G$ is equal to the number of walks of length $m-1$ in the directed graph defined by the transition matrix $T$. If every state can occur as both an initial and terminal state, then this number is given by
$$
\mathbf{1}^T T^{m-1} \mathbf{1},$$ where $\mathbf{1}$ is the all-ones vector.
\end{theorem}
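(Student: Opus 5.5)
The argument I would give is a direct bijection between arithmetical structures and admissible sequences of column states, followed by the standard transfer-matrix count. By the Remark after Theorem~\ref{nonsym}, an arithmetical structure on $G=P_2\square P_m$ is the same thing as a primitive positive vector ${\bf r}=(a_1,\dots,a_m,b_1,\dots,b_m)$ satisfying
\[
a_i \mid a_{i-1}+a_{i+1}+b_i, \qquad b_i \mid b_{i-1}+b_{i+1}+a_i,
\]
with the boundary convention $a_0=b_0=a_{m+1}=b_{m+1}=0$; the vector ${\bf d}$ is then determined by ${\bf r}$.

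\textbf{Forward map.} Send $({\bf d},{\bf r})$ to $(S_1,\dots,S_m)$ with $S_i=(a_i,b_i)$. To see that each $S_i\to S_{i+1}$ is admissible, I would use the $C_4$-decomposition lemma: the $4$-cycle on $(1,i),(1,i+1),(2,i+1),(2,i)$ must carry an arithmetical structure with these column values. I would take the restricted labels, divided by their gcd, and define the $C_4$ degrees by the cycle equations. These degrees are integers precisely when the local divisibilities $a_i\mid a_{i+1}+b_i$, $a_{i+1}\mid a_i+b_{i+1}$ (and the analogous ones for $b$) hold.

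\textbf{Main obstacle.} This is exactly where the hypothesis $r(1,i)\mid r(1,i+1)$, $r(2,i)\mid r(2,i+1)$ must be used. The ladder equation at $(1,i)$ has the extra term $a_{i-1}$, which is absent in the $C_4$ equation. I would first try to use the divisibility chain $a_{i-1}\mid a_i\mid a_{i+1}$ to show that the ladder divisibility at $(1,i)$ forces the $C_4$ divisibility after discarding $a_{i-1}$. If that fails, I would fall back on checking admissibility directly from the definition of $S$. The definition only requires the existence of \emph{some} arithmetical structure on $C_4$ with the prescribed column values, so there is freedom to choose it.

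\textbf{Injectivity.} The states determine ${\bf r}$ entrywise. They determine it exactly if we also fix normalization, which is why each $S_i$ is taken primitive together with $\gcd({\bf r})=1$. Then ${\bf d}$ is forced, so distinct structures give distinct sequences.

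\textbf{Surjectivity.} Given an admissible sequence, I would assemble ${\bf r}$ column by column and verify each ladder equation at $(1,i)$. Its right-hand side splits into the two $C_4$ contributions, one from the cycle on columns $i-1,i$ and one from the cycle on columns $i,i+1$, minus the shared term $b_i$. Each $C_4$ contribution is divisible by $a_i$, so the sum $a_{i-1}+a_{i+1}+b_i$ is congruent to $-b_i$ mod $a_i$. To clear this shared term I would invoke the column divisibility hypothesis again, aiming for $a_i\mid b_i$-type consequences. Primitivity of the assembled ${\bf r}$ follows from the primitivity of the states. Positivity and an irreducible singular $M$-matrix then follow from Theorem~\ref{Theorem 3.2 from Corrales and Valencia}.

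\textbf{Counting.} Once the bijection is in place, the counting statement is standard. $(T^{m-1})_{s,t}$ counts walks of length $m-1$ from $s$ to $t$ in the directed state graph. Summing over all initial and terminal states, which the hypothesis allows, gives $\mathbf{1}^T T^{m-1}\mathbf{1}$.
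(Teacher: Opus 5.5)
Your outline (columns as states, local $C_4$ checks, then a walk count) is the paper's outline, and the paper's proof also just asserts that the local conditions ``combine''. The step you flag as the main obstacle is real, though, and your proposal leaves it open: neither of your ideas for the forward direction works. Write $a_i=r(1,i)$, $b_i=r(2,i)$.

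\textbf{Forward direction.} Admissibility of $S_i\to S_{i+1}$ at the vertex $(1,i)$ is $a_i\mid b_i+a_{i+1}$, which under $a_i\mid a_{i+1}$ is $a_i\mid b_i$. The ladder equation at $(1,i)$ together with $a_i\mid a_{i+1}$ only gives $a_i\mid a_{i-1}+b_i$, which is the condition for the \emph{incoming} transition. Discarding $a_{i-1}$ modulo $a_i$ would need $a_i\mid a_{i-1}$, but the chain supplies the opposite divisibility $a_{i-1}\mid a_i$. The fallback gives you nothing either. Once $(a,b)$ and $(c,d)$ are prescribed, the $C_4$ labelling $(a,c,d,b)$ and hence its $\mathbf{d}$ are fixed, so admissibility is exactly the four divisibilities and there is nothing to choose.

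The missing idea is to look one column to the \emph{right}. The equations at column $i+1$ (the boundary ones if $i+1=m$, otherwise the interior ones with $a_{i+2},b_{i+2}$ discarded via the hypothesis) give $a_{i+1}\mid a_i+b_{i+1}$ and $b_{i+1}\mid b_i+a_{i+1}$. Then $a_i\mid a_{i+1}$ yields $a_i\mid b_{i+1}$, hence $a_i\mid b_i+a_{i+1}$, hence $a_i\mid b_i$; symmetrically $b_i\mid a_i$. So $a_i=b_i$ for $1<i<m$. The interior equations then force these middle values to equal a common $c$ dividing $a_1,b_1,a_m,b_m$, so $c=1$ by primitivity of $\mathbf{r}$. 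Only this shows that the columns are primitive states at all and that every transition is admissible. Your injectivity paragraph takes primitivity of the columns for granted, and normalizing columns instead would destroy injectivity.

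\textbf{Converse and count.} Your instinct ($a_i\mid b_i$) is right and closes at once, \emph{provided} the divisibility hypothesis is imposed on the sequence as well. Admissibility of $S_i\to S_{i+1}$ gives $a_i\mid b_i+a_{i+1}$, so $a_i\mid a_{i+1}$ yields $a_i\mid b_i$; symmetrically $b_i\mid a_i$. Primitivity of $S_i$ then forces $S_i=(1,1)$ for $1<i<m$, and every ladder equation is either trivial or literally a $C_4$ equation.

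Without that restriction the converse is false. The walk $(2,1)\to(1,2)\to(2,1)$ is admissible (both transitions are in the paper's list), yet the resulting labelling fails at $(2,2)$, since $2\nmid 1+1+1$. This is also why your ``standard'' counting step does not go through: $\mathbf{1}^T T^{m-1}\mathbf{1}$ counts every walk, including ones violating $a_i\mid a_{i+1}$, $b_i\mid b_{i+1}$, and $T$ does not encode these conditions.

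In fact the argument above shows that for $m\ge 3$ the structures satisfying the hypothesis are exactly those with all middle columns $(1,1)$ and end columns in $\{(1,1),(1,2),(2,1),(2,3),(3,2)\}$. That is $25$ structures for every $m$, whereas $\mathbf{1}^T T^{m-1}\mathbf{1}$ grows with $m$ (already $64$ at $m=4$ for the paper's $4\times 4$ matrix). So the bijection is provable with the hypothesis imposed on both sides. The closed-form count, however, must restrict the last $m-2$ transitions, e.g.\ $\mathbf{1}^T T\,(T')^{m-2}\mathbf{1}$ with $T'$ the matrix of admissible transitions that also satisfy $a\mid c$ and $b\mid d$.
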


\begin{proof}
Each arithmetical structure assigns a positive integer value $r(v)$ to every vertex $v$ of $G$. For each column $i$, define the state $S_i = (r(1,i), r(2,i))$.

The Laplacian condition at each vertex relates values in column $i$ with those in adjacent columns $i-1$ and $i+1$. Given that  $r(1,i)| r(1,i+1)$ and $r(2,i)| r(2,i+1)$, for $1<i< m$. So, for each pair of consecutive columns $i$ and $i+1$, these conditions impose compatibility relations between $S_i$ and $S_{i+1}$. By definition, this means that $S_i \to S_{i+1}$ is an admissible transition. Therefore, any global arithmetical structure determines a sequence $(S_1, S_2, \dots, S_m)$ in which every consecutive pair satisfies the admissibility condition.

Conversely, suppose we are given a sequence $(S_1, S_2, \dots, S_m)$ such that each
transition $S_i \to S_{i+1}$ is admissible. We define a labeling $r$ on the vertices of $G$ by assigning the entries of $S_i$
to the vertices in column $i$. This is well-defined since each vertex belongs to
exactly one column.
We now verify that $r$ satisfies the Laplacian condition at every vertex.

We now verify that $r$ satisfies the Laplacian condition at every vertex. 
Let $(a,i)$ be an interior vertex with $1 < i < m$. The Laplacian condition at
$(a,i)$ involves contributions from the vertex itself, its vertical neighbor in the
same column, and its horizontal neighbors in columns $i-1$ and $i+1$.
The admissibility of the transition $S_{i-1} \to S_i$  and  $r(1,i)| r(1,i+1)$ and $r(2,i)| r(2,i+1)$, for $1<i< m$ guaranties that all relations involving columns $i-1$ and $i$ are satisfied, while the admissibility of $S_i \to S_{i+1}$ guaranties all relations involving columns $i$ and $i+1$.
Since every edge of $G$ is contained in exactly one or two adjacent column pairs, and the labeling is consistent on shared vertices, these local conditions combine
to give the full Laplacian equation at $(a,i)$. For boundary vertices ($i=1$ or $i=m$), only one adjacent transition  is involved,
and the admissibility condition for that transition ensures the required Laplacian relations. 

Since every Laplacian equation involves only vertices within one column or adjacent columns, all conditions are satisfied, and hence, all vertices satisfy $(\mathrm{diag}(d) - A)r = 0$, so $r$ is an arithmetical structure on $G$.
Thus, arithmetical structures on $G$ are in bijection with admissible sequences of length $m$.

Finally, such sequences correspond exactly to walks of length $m-1$ with the adjacency matrix is $T$. Therefore, the number of arithmetical structures equals the number of such walks. If every state in $S$ can serve as both an initial and terminal state, this number is given by $\mathbf{1}^T T^{m-1} \mathbf{1}$.
\end{proof}

\begin{remark}
We have the following consequences:
\begin{itemize}
\item The counting problem reduces to a transfer-matrix method, where all local constraints arise from $C_4$.
\item The growth of $N_{m}$ is asymptotically governed by the spectral radius $\rho(T)$ of the transition matrix: $N_{m} \sim \rho(T)^{\,m-1}$.
\item For $n=2$, the state space reduces to pairs $(a,b)$, and admissibility is determined directly by $C_4$, giving an explicit finite matrix $T$.
\item This framework extends naturally to higher-dimensional grids and other Cartesian products, highlighting the generality of the approach.
\end{itemize}
\end{remark}

The algorithm provided below gives a finite-state dynamic programming approach to counting arithmetical structures.
\begin{algorithm}[H]
\caption{Transition Matrix Method for $P_2 \square P_m$}
\begin{enumerate}
    \item Compute all primitive arithmetical structures on $C_4$, i.e., find all pairs $(d,r)$ such that
    \[
    \operatorname{diag}(d)\,r = A(C_4)\,r,\quad r \in \mathbb{Z}_{>0}^4,\quad \gcd(r)=1.
    \]
    \item Extract the state space $S$, consisting of all primitive pairs $(a,b)$ that appear as $(r_{1,i}, r_{2,i})$ in these structures such that  $r(1,i)| r(1,i+1)$ and $r(2,i)| r(2,i+1)$, $1< i<m$.
    \item For each $(a,b),(c,d) \in S$, declare $(a,b) \to (c,d)$ admissible if there exists a primitive arithmetical structure on $C_4$ whose first column is $(a,b)$ and second column is $(c,d)$.
    \item Construct the transition matrix $T = (T_{(a,b),(c,d)})$ indexed by $S$, where
    \[
    T_{(a,b),(c,d)} =
    \begin{cases}
    1, & \text{if } (a,b) \to (c,d) \text{ is admissible}, \\
    0, & \text{otherwise}.
    \end{cases}
    \]

    \item Compute $T^{m-1}$.

    \item Compute
    \[
    N_m = \mathbf{1}^T T^{m-1} \mathbf{1},
    \]
    where $\mathbf{1}$ is the all-ones vector indexed by $S$.
\end{enumerate}
\end{algorithm}

\begin{remark}
Note that $\mathbf{1}^T T^{\,m-1} \mathbf{1}$ counts all admissible sequences of length $m$. Further, $$
    N_m = \text{number of valid sequences } (S_1, \dots, S_m)
= \text{number of arithmetical structures on } P_2 \square P_m.$$
   \begin{itemize}
    \item Each column $i$ of the ladder graph corresponds to a state $S_i$,
    \item Valid structures correspond to walks in a finite state graph,
    \item $T$ is the adjacency matrix of that state graph,
    \item Hence, $N_m$ counts all walks of length $m-1$.
\end{itemize} 
\end{remark}

\subsection{Construction of the Transition Matrix for $P_2 \square P_m$}
\begin{enumerate}
 \item (State Space) For $G = P_2 \square P_m$, each column defines a state $
S_i = (r_{1,i}, r_{2,i}) \in \mathbb{Z}_{>0}^2,$
with $\gcd(r_{1,i}, r_{2,i}) = 1$.
Let $S$ denote the set of all primitive pairs $(a,b)$ that arise as $(r_{1,i}, r_{2,i})$ in some arithmetical structure on $C_4$.

\item (Admissibility Condition) Two states $(a,b)$ and $(c,d)$ in $S$ are said to be admissible if there exists an arithmetical structure on the 4-cycle $
(1,i), (1,i+1), (2,i+1), (2,i)$
(in cyclic order) such that $
(r_{1,i}, r_{2,i}) = (a,b), \quad (r_{1,i+1}, r_{2,i+1}) = (c,d).$

\item {Arithmetical Structure on $C_4$}
Let $r = (a,c,d,b)$ correspond to the vertices $
(1,i), (1,i+1), (2,i+1), (2,i).$
Then the defining equations $
\operatorname{diag}(d)\,r = A(C_4)\,r$ are equivalent to
\[
\begin{aligned}
d_1 a &= b + c, \\
d_2 c &= a + d, \\
d_3 d &= b + c, \\
d_4 b &= a + d.
\end{aligned}
\]

\item
The above system admits a solution in positive integers $d_1,d_2,d_3,d_4$ if and only if
\[
a \mid (b + c), \quad b \mid (a + d), \quad c \mid (a + d), \quad d \mid (b + c).
\]

Thus, admissibility of $(a,b)$ and $(c,d)$ is equivalent to the above divisibility conditions.

\item Define the transition matrix $T = (t_{(a,b),(c,d)})$ indexed by $S$ by
\[
t_{(a,b),(c,d)} =
\begin{cases}
1, & \text{if } a \mid (b + c),\; b \mid (a + d),\; c \mid (a + d),\; d \mid (b + c), \\
0, & \text{otherwise}.
\end{cases}
\]

\item{Algorithm}

\begin{enumerate}
    \item Compute all primitive arithmetical structures on $C_4$.
    \item Extract the state space $S$ of primitive pairs $(a,b)$.
    \item For each $(a,b),(c,d) \in S$, check the divisibility conditions:
    \[
    a \mid (b + c), \quad b \mid (a + d), \quad c \mid (a + d), \quad d \mid (b + c).
    \]
    \item Construct the transition matrix $T$.
    \item Compute $T^{m-1}$.
    \item Compute $N_m = \mathbf{1}^T T^{m-1} \mathbf{1},$
    where $\mathbf{1}$ is the all-ones vector indexed by $S$.
\end{enumerate}
\end{enumerate}

\begin{exam}
Label the vertices of $C_4$ as $(1,i), (1,i+1), (2,i+1), (2,i)$ in cyclic order. Let $r = (a, c, d, b),$
where $(a,b)$ corresponds to column $i$ and $(c,d)$ corresponds to column $i+1$.
The defining equations of an arithmetical structure,
\[
\operatorname{diag}(d)\,r = A(C_4)\,r,
\]
yield the system
\[
\begin{aligned}
d_1 a &= c + b, \\
d_2 c &= a + d, \\
d_3 d &= c + b, \\
d_4 b &= a + d.
\end{aligned}
\]
We require $a,b,c,d > 0$ and $\gcd(a,c,d,b) = 1$. Solving this system in positive integers yields all primitive arithmetical structures on $C_4$.

For $C_4$, the state space is the set of primitive column pairs that arise is
$$S = \{(2,1), (1,1), (1,2), (3,2)\}.$$
Now,  the admissible transitions between states are given by
$$
\begin{aligned}
(2,1) & \to (1,1), (1,2),\\
(1,1) &\to (1, 1),(1,2), (3,2), \\
(1,2) &\to (1,1), (2,1),\\
(3,2) &\to (1,1).
\end{aligned}$$
Ordering the states as $(2,1),(1,1), (1,2), (3,2)$, the transition matrix is given by
$$
T =
\begin{pmatrix}
0 & 1 & 1 & 0 \\
1 & 1 & 1 & 1 \\
1 & 1 & 0 & 0 \\
0 & 1 & 0 & 0
\end{pmatrix}.$$
For example, consider $m=3$. 
We compute
$$
T^2 =
\begin{pmatrix}
2 & 2 & 1 & 1\\
2 & 4 & 2 & 1 \\
1 & 2 & 2 & 1 \\
1 & 1 & 1 & 1 
\end{pmatrix}.
$$
Thus, we have
$$N_{3} = \mathbf{1}^T T^2 \mathbf{1}
= (1,1,1)
\begin{pmatrix}
2 & 2 & 1 & 1\\
2 & 4 & 2 & 1 \\
1 & 2 & 2 & 1 \\
1 & 1 & 1 & 1 
\end{pmatrix}
\begin{pmatrix}
1 \\ 1 \\ 1
\end{pmatrix}
= 25.$$

Now we have state space $ = \{S_1,S_2, S_3, S_4\}=\{(2,1),(1,1),(1,2),(3,2)\}$ and  with respect to column ordering adjacency matrix, arithmetical structures for  $P_2 \square P_3$ is given by
\begin{table}[ht]
\centering
\begin{tabular}{|c|c|c|c|}
\hline
No. & State Sequence & $r$-vector  & $d$-vector\\ \hline
1 & $(S_1,S_2,S_2)$ & $(2,1,1,1,1,1)$ & (1,3,4,3,2,2)\\ \hline
2 & $(S_1,S_2,S_3)$ & $(2,1,1,1,1,2)$ & (1,3,4,4,3,1) \\ \hline
3 & $(S_1, S_2, S_4)$ &$ (2,1,1,1,3,2)$ & (1,3,6,4,1,2)  \\ \hline
4 & $(S_1,S_2,S_1)$ & $(2,1,1,1,2,1)$ & (1,3,5,3,1,3)\\ \hline
5 & $(S_1, S_3, S_1)$ & $(2,1,1,1,1,1)$ & (1,3,4,3,2,2)  \\ \hline
6 & $(S_2,S_2,S_2)$ & $(1,1,1,1,1,1)$ & (2,2,3,3,2,2) \\ \hline
7 & $(S_2,S_2,S_3)$ & $(1,1,1,1,1,2)$  & (2,2,3,4,3,1)\\ \hline
8 & $(S_2,S_2,S_4)$ & $(1,1,1,1,3,2)$ & (2,2,5,4,1,2)\\ \hline
9 & $(S_2,S_2,S_1)$ & $(1,1,1,1,2,1)$ & (2,2,4,3,1,3) \\ \hline
10 & $(S_3,S_2,S_2)$ & $(1,2,1,1,1,1)$  & (3,1,3,4,2,2)\\ \hline
11 & $(S_3,S_2,S_3)$ & $(1,2,1,1,1,2)$ & (3,1,3,5,3,1)\\ \hline
12 & $(S_3,S_2,S_1)$ & $(1,2,1,1,2,1)$ & (3,1,4,4,1,3) \\ \hline
13 & $(S_3,S_2,S_4)$ & $(1,2,1,1,3,2)$ & (3,1,5,5,1,2)\\ \hline
14 & $(S_4,S_2,S_2)$ & $(3,2,1,1,1,1)$  & (1,2,5,4,2,2)\\ \hline
15 & $(S_4,S_2,S_3)$ & $(3,2,1,1,1,2)$ & (1,2,5,5,3,1)\\ \hline
16 & $(S_4,S_2,S_4)$ & $(3,2,1,1,3,2)$ & (1,2,7,5,1,2)\\ \hline
17 & $(S_4, S_2, S_1)$ &$ (3,2,1,1,2,1)$ &  (1,2,6,4,1,3) \\ \hline
\end{tabular}
\caption{Arithmetical structures on $P_2 \square P_3$ arising from the state space
$S=\{(2,1), (1,1),(1,2),(3,2)\}$}
\end{table}

\end{exam}

\begin{remark}
Let $(d,r)$ be an arithmetical structure on $P_2 \square P_m$ with
$r_{1,i} = a_i$ and $r_{2,i} = b_i$, with $a_i, b_i \in \mathbb{Z}_{>0}$ and define $\delta_i = a_i - b_i$. Then subtracting the equations coming from arithmetical structures on $P_2 \square P_m$ we have $
d_{1,i} a_i - d_{2,i} b_i = \delta_{i-1} + \delta_{i+1} - \delta_i,$ where $i=1:m$  and $a_0 = b_0 = a_{m+1} = b_{m+1} = 0$  In particular, if $d_{1,i}a_i = d_{2,i}b_i$
for all $i$, then $\delta_{i-1} + \delta_{i+1} = \delta_i$ for all $i$.
As a consequence, If for some $i$, $
a_i = b_i \quad \text{and} \quad a_{i+1} = b_{i+1},$
then $
a_j = b_j \quad \text{for all } j = 1, \dots, m.$ That is it says that Local symmetry at two consecutive positions forces global symmetry. So, if two consecutive columns are symmetric, then the entire ladder is symmetric.
\end{remark}

\subsection{Arithmetical Structures on $P_n\square P_m$}
The graph $P_m \square P_n$ is a rectangular grid with vertices $V = \{(i,j) : 1 \leq i \leq m,; 1 \leq j \leq n\}.$
Edges connect nearest neighbors horizontally and vertically.

\begin{definition}
Let $P_n$ and $P_m$ be path graphs on $n$ and $m$ vertices, respectively. 
The \emph{grid graph} is defined as the Cartesian product $
G = P_n \square P_m$ is a rectangular grid with vertex set 
\[
V(G) = \{(i,j) \mid 1 \le i \le n,\; 1 \le j \le m\}.
\]
Two vertices $(i,j)$ and $(k,\ell)$ are adjacent if and only if
\[
(i = k \text{ and } |j - \ell| = 1)
\quad \text{or} \quad
(j = \ell \text{ and } |i - k| = 1).
\]
\end{definition}
The adjacency matrix of the grid graph $ P_n \square P_m$ is given by
\[
A(G) = A(P_n) \otimes I_m \;+\; I_n \otimes A(P_m),
\]
where $A(P_n)$ and $A(P_m)$ are the adjacency matrices of the path graphs,
$I_n$ and $I_m$ are identity matrices.

Now, Consider $G= P_n \square P_m$. Let $({\bf d}, {\bf r})$  be an arithmetical structure on $G$. Then we have 
$$
\operatorname{diag}({\bf d}) {\bf r} =
\left(A(P_n)\otimes I_m + I_n \otimes A(P_m)\right) {\bf r}.
$$
Now, for each vertex $(i,j)$, for interior vertices  $(1<i<n,\; 1<j<m)$, our aim is to solve
\[
d_{i,j} r_{i,j} = r_{i-1,j} + r_{i+1,j} + r_{i,j-1} + r_{i,j+1}
\]
with boundary adjustments as follows: for corner vertices
\begin{align*}
d_{1,1} \, r_{1,1} &= r_{2,1} + r_{1,2}, \\
d_{1,m} \, r_{1,m} &= r_{2,m} + r_{1,m-1}, \\
d_{n,1} \, r_{n,1} &= r_{n-1,1} + r_{n,2}, \\
d_{n,m} \, r_{n,m} &= r_{n-1,m} + r_{n,m-1}, 
\end{align*}
For Boundary vertices we have
\begin{align*}
\text{Top row } (i=1,\; 1<j<m): \quad
d_{1,j} \, r_{1,j} &= r_{1,j-1} + r_{1,j+1} + r_{2,j}, \\
\text{Bottom row } (i=n,\; 1<j<m): \quad
d_{n,j} \, r_{n,j} &= r_{n,j-1} + r_{n,j+1} + r_{n-1,j}, \\
\text{Left column } (j=1,\; 1<i<n): \quad
d_{i,1} \, r_{i,1} &= r_{i-1,1} + r_{i+1,1} + r_{i,2}, \\
\text{Right column } (j=m,\; 1<i<n): \quad
d_{i,m} \, r_{i,m} &= r_{i-1,m} + r_{i+1,m} + r_{i,m-1}. 
\end{align*}
Technically, finding arithmetical structures on grid graphs is more involved. 
Therefore, in this paper, we present only a few results and touch upon the topic for the sake of completeness.

\begin{theorem}
Let $G = P_n \square P_m$. Suppose $({\bf d}, {\bf r})$ is an arithmetical structure on $G$ such that $
r_{i,j} = r_j \quad \text{for all } i = 1,\dots,n \text{ and } j = 1,\dots,m.$
Then the vector $(r_1, r_2, \dots, r_m)$ satisfies $
r_j \mid (r_{j-1} + r_{j+1})$
for all $j$ and hence determines an arithmetical structure on $P_m$.

Conversely, any arithmetical structure on $P_m$ induces such a column-constant arithmetical structure on $G$, where
$$
d_{i,j} = \frac{r_{j-1} + r_{j+1}}{r_j} + c_i,
\text{   with   } c_i = 2 \text{   for  } 1 < i < n \text{   and   } c_i = 1 \text{  for  } i=1,n.$$
\end{theorem}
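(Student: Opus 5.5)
The plan is to work directly with the vertex equations for $P_n \square P_m$ listed just before the statement, using the boundary convention $r_0 = r_{m+1} = 0$ from Theorem~\ref{nonsym}. The key observation is that the vertical contribution to each equation depends only on the row index $i$. Fix a vertex $(i,j)$. Its horizontal neighbours are $(i,j\pm 1)$, whenever these exist, and they carry labels $r_{j-1}$ and $r_{j+1}$. Its vertical neighbours are $(i\pm1,j)$, and under the column-constant assumption each of them carries the label $r_j$. The number of vertical neighbours is exactly $c_i$: it equals $2$ for $1<i<n$ and $1$ for $i\in\{1,n\}$, assuming $n\ge 2$ (for $n=1$ one would take $c_1=0$, and the statement is then trivial). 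Hence every one of the corner, boundary and interior equations collapses uniformly to
\[
d_{i,j}\, r_j = r_{j-1} + r_{j+1} + c_i\, r_j ,
\qquad\text{that is,}\qquad
(d_{i,j} - c_i)\, r_j = r_{j-1} + r_{j+1}.
\]

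For the forward direction, I set $d'_j := d_{i,j} - c_i$. This is an integer, and the displayed identity shows that $r_j \mid (r_{j-1}+r_{j+1})$. It also shows that $d'_j$ does not depend on $i$, since it equals $(r_{j-1}+r_{j+1})/r_j$. Next I check that $d'_j$ is positive. For $m\ge 2$ every $j$ has at least one horizontal neighbour, so $r_{j-1}+r_{j+1}>0$ and therefore $d'_j\ge 1$. Finally, the vector $(r_1,\dots,r_m)$ is positive, and it is primitive because the set of entries of $\mathbf r$ coincides with $\{r_1,\dots,r_m\}$. Together these facts give $\operatorname{diag}(d')\,(r_1,\dots,r_m)^T = A(P_m)(r_1,\dots,r_m)^T$, so $(d',(r_j))$ is an arithmetical structure on $P_m$.

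For the converse, there are two possible routes. The first is to reverse the computation above: define $r_{i,j}=r_j$ and $d_{i,j}=d'_j+c_i$, and observe that every vertex equation holds. The cleaner route is to invoke the Cartesian-product theorem proved via $M$-matrices. I would apply it with $G=P_n$ carrying the Laplacian structure $x=\mathbf 1_n$, $d^{G}=(1,2,\dots,2,1)=(c_i)$, and with $H=P_m$ carrying the given structure $(d',y)$. That theorem yields $\mathbf r = \mathbf 1_n\otimes y$, which is exactly the column-constant vector $r_{i,j}=y_j$, together with $d_{(i,j)} = c_i + d'_j$, which is precisely the formula in the statement. Primitivity of $\mathbf r$ and positivity of $\mathbf d$ then follow immediately.

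There is no real obstacle here; the only step needing care is the bookkeeping of neighbours at the corners and boundary rows. I must confirm that the row-dependent term is exactly $c_i r_j$ and the column-dependent term is exactly $r_{j-1}+r_{j+1}$ with the zero convention, uniformly across all nine cases. I must also state the implicit hypotheses $n\ge 2$ and $m\ge 2$, since these are what guarantee $c_i$ as given and $d'_j>0$.
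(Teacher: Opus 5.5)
Your proof is correct and follows essentially the paper's argument. The paper derives the same identity $d_{i,j}r_j = r_{j-1}+r_{j+1}+c_i r_j$ by splitting $A(G)\mathbf r$ through $A(P_n)\otimes I_m + I_n\otimes A(P_m)$ rather than vertex by vertex, and proves the converse by reversing that computation, as in your first route. Your extra checks fill in details the paper leaves unstated: positivity of $d'_j$ for $m\ge 2$, primitivity of $(r_1,\dots,r_m)$, and the implicit hypothesis $n\ge 2$. Your alternative converse via the Cartesian-product theorem with the Laplacian structure on $P_n$ is also valid.
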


\begin{proof}
Let $({\bf d}, {\bf r})$ be an arithmetical structure on $G$. Then
\[
(\mathrm{diag}(d) - A(G))r = 0 \implies 
\mathrm{diag}(d)\,r = A(G)\,r.
\]
Using the decomposition
\[
A(G) = A(P_n)\otimes I_m + I_n \otimes A(P_m),
\]
we obtain
\[
\mathrm{diag}(d)\,r = (A(P_n)\otimes I_m)r + (I_n \otimes A(P_m))r.
\]

Now assume $r_{i,j} = r_j$ for all $i$. Then $r$ consists of $n$ identical copies of the vector $(r_1,\dots,r_m)$.
Here and throughout, we adopt the convention that $r_0=0$
and $r_{m+1} = 0$, corresponding to the boundary vertices of $P_m$

First consider $(I_n \otimes A(P_m))r$. This acts along each row and gives, at each position $(i,j)$,
$r_{j-1} + r_{j+1}.$
Next, consider $(A(P_n)\otimes I_m)r$. This acts between rows. Since all entries in a column are equal, each vertical neighbor contributes $r_j$. Hence, at $(i,j)$, this term contributes $c_i r_j$, where $c_i = 2$ for $1<i<n$ and $c_i = 1$ for $i=1,n$.
Combining both contributions, we obtain
\[
d_{i,j} r_j = r_{j-1} + r_{j+1} + c_i r_j.
\]
Rewriting,
\[
d_{i,j} = \frac{r_{j-1} + r_{j+1}}{r_j} + c_i.
\]
Since $d_{i,j}$ is an integer, it follows that $
r_j \mid (r_{j-1} + r_{j+1}),$
so $(r_1,\dots,r_m)$ satisfies the arithmetical structure condition on $P_m$.

Conversely, suppose $(r_1,\dots,r_m)$ defines an arithmetical structure on $P_m$, so that
\[
\frac{r_{j-1} + r_{j+1}}{r_j}
\]
is a positive integer for all $j$. Define $r_{i,j} = r_j$ and set
\[
d_{i,j} = \frac{r_{j-1} + r_{j+1}}{r_j} + c_i.
\]
Then all entries of $d$ are positive integers, and $
(\mathrm{diag}(d) - A(G))r = 0$
holds by construction. Hence $({\bf d}, {\bf r})$ is an arithmetical structure on $G$.
\end{proof}

Let $G = P_n \square P_m$. Consider arithmetical structures $(\mathbf{d}, \mathbf{r})$ on $G$ satisfying
$r_{i,1} = r_{i,2} = \dots = r_{i,m} \quad \text{for all } i = 1, \dots, n.$
Then the above theorem gives such symmetric structures are in bijection with arithmetical structures on $P_n$.

Now, we derive the arithmetical structures on $P_n \square P_m$ via M-matrices.

\begin{theorem}
Let $P_n$ and $P_m$ be path graphs with $n$ and $m$ vertices, respectively.  
Suppose $(d_{P_n}, x)$ is an arithmetical structure on $P_n$ and $(d_{P_m}, y)$ is an arithmetical structure on $P_m$, with $x \in \mathbb{Z}_{>0}^n,  y \in \mathbb{Z}_{>0}^m, $
$$M_{P_n} x = (\mathrm{diag}(d_{P_n}) - A(P_n)) x = 0, \quad
M_{P_m} y = (\mathrm{diag}(d_{P_m}) - A(P_m)) y = 0.$$
Then an arithmetical structure $(d,r)$ on the Cartesian product $P_n \square P_m$ is given by
\[
{\bf r} = x \otimes y \in \mathbb{Z}_{>0}^{nm}, \quad
d(i,j) = d_{P_n,i} + d_{P_m,j} \in \mathbb{Z}_{>0}^{nm}.
\]
Equivalently, if $M_{P_n \square P_m} := \mathrm{diag}(d) - A(P_n \square P_m),$
then $
M_{P_n \square P_m}  {\bf r} = 0.$
\end{theorem}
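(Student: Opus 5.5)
This is the special case $G = P_n$, $H = P_m$ of the preceding theorem on arithmetical structures on Cartesian products via $M$-matrices. Still, I will write out the direct verification, since it is short. I will use the row-major ordering of $V(P_n \square P_m)$, so that $\mathbf r = x\otimes y$ has $r_{(i,j)} = x_i y_j$ and
\[
A(P_n\square P_m) = A(P_n)\otimes I_m + I_n\otimes A(P_m).
\]

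\textbf{Step 1: the kernel equation.} By the mixed-product property of the Kronecker product,
\[
A(P_n\square P_m)(x\otimes y) = (A(P_n)x)\otimes y + x\otimes (A(P_m)y).
\]
Since $A(P_n)x = d_{P_n}\circ x$ and $A(P_m)y = d_{P_m}\circ y$, the $(i,j)$ entry of the right-hand side is
\[
d_{P_n,i}\,x_i y_j + d_{P_m,j}\,x_i y_j = \bigl(d_{P_n,i}+d_{P_m,j}\bigr) r_{(i,j)}.
\]
This is exactly $(\operatorname{diag}(\mathbf d)\mathbf r)_{(i,j)}$, so $M_{P_n\square P_m}\mathbf r = 0$. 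Equivalently, $M_{P_n\square P_m} = M_{P_n}\otimes I_m + I_n\otimes M_{P_m}$, and both summands annihilate $x\otimes y$.

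\textbf{Step 2: positivity.} Every entry $x_i y_j$ is a positive integer. Every entry $d_{P_n,i} + d_{P_m,j}$ is a sum of two positive integers, hence a positive integer.

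\textbf{Step 3: primitivity.} This is the one point that needs an argument beyond a citation, so I single it out. For paths, every arithmetical structure has $r$-value $1$ at some vertex; in particular a leaf satisfies $d_1 x_1 = x_2$, so $x_1 \mid x_2$, and propagating this along the path shows the minimal entry divides its neighbours. I would cite this from \cite{braun2017arithmetical}. Failing that, I would argue directly. Since $\gcd(x)=1$ and $\gcd(y)=1$, there are integers $a_i, b_j$ with $\sum_i a_i x_i = 1$ and $\sum_j b_j y_j = 1$. Then
\[
\sum_{i,j} a_i b_j\, x_i y_j = 1,
\]
so $\gcd(x\otimes y) = 1$. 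This Bézout argument is clean and avoids needing any special facts about paths, so I will use it.

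\textbf{Step 4: the $M$-matrix condition (optional).} If one also wants $M_{P_n\square P_m}$ to be an almost non-singular $M$-matrix, note that it is a $Z$-matrix, and $P_n\square P_m$ is connected, so the matrix is irreducible. It has the positive kernel vector $\mathbf r$. Theorem~\ref{Theorem 3.2 from Corrales and Valencia} then applies directly. This step is not needed for the arithmetical-structure conclusion itself, which follows from Steps 1–3.
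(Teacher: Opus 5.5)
Your proof is correct and follows the same route as the paper. The mixed-product identities $(A(P_n)\otimes I_m)(x\otimes y)=(d_{P_n}\circ x)\otimes y$ and $(I_n\otimes A(P_m))(x\otimes y)=x\otimes(d_{P_m}\circ y)$ give $\operatorname{diag}(\mathbf d)\mathbf r=A(P_n\square P_m)\mathbf r$, and positivity follows at once.

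Your B\'ezout argument for $\gcd(x\otimes y)=1$ fills a step that the paper's proof of this statement leaves out, since it checks only positivity. Your aside about paths is garbled as written: propagating $x_1\mid x_2$ actually shows that $x_1$ divides every entry, so $x_1=y_1=1$ and $r_{(1,1)}=1$. You do not rely on it, however.
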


\begin{proof}
The adjacency matrix of $P_n \square P_m$ satisfies
\[
A(P_n \square P_m) = A(P_n) \otimes I_m + I_n \otimes A(P_m),
\]
where $I_n, I_m$ are identity matrices.  
Consider $r = x \otimes y$ and $d(i,j) = d_{P_n,i} + d_{P_m,j}$. Using Kronecker product properties:
\[
(A(P_n) \otimes I_m)(x \otimes y) = (A(P_n) x) \otimes y = (d_{P_n} \circ x) \otimes y,
\]
\[
(I_n \otimes A(P_m))(x \otimes y) = x \otimes (A(P_m) y) = x \otimes (d_{P_m} \circ y),
\]
where $\circ$ denotes componentwise multiplication.  
Therefore,
\[
A(P_n \square P_m) \, r = (d_{P_n} \circ x) \otimes y + x \otimes (d_{P_m} \circ y) = \mathrm{diag}(d) \, r.
\]
It follows that
\[
M_{P_n \square P_m} \, r = (\mathrm{diag}(d) - A(P_n \square P_m)) \, r = 0.
\]
Since $x > 0$ and $y > 0$, we have $r > 0$, and by construction $d > 0$. Hence $(d,r)$ is an arithmetical structure on $P_n \square P_m$, as claimed.
\end{proof}

Now, we study arithmetical structures on the Cartesian product graph $P_n \square P_m$ via cycle graph $C_4$. 
This graph can be viewed as a grid and decomposed into copies of $C_4$, which provides a useful structural framework.

\begin{lemma}
The graph $P_n \square P_m$ can be decomposed into $(n-1)(m-1)$ subgraphs, each isomorphic to $C_4$, corresponding to the unit squares of the grid.
\end{lemma}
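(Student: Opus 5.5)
The plan is to exhibit the decomposition explicitly, mirroring the proof of the earlier lemma for $P_2 \square P_m$. For each pair $(i,j)$ with $1 \le i \le n-1$ and $1 \le j \le m-1$, let $Q_{i,j}$ be the subgraph with vertex set
\[
\{(i,j),\ (i,j+1),\ (i+1,j+1),\ (i+1,j)\}
\]
and edge set
\[
\{(i,j)(i,j+1),\ (i,j+1)(i+1,j+1),\ (i+1,j+1)(i+1,j),\ (i+1,j)(i,j)\}.
\]

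The first step is to check that each $Q_{i,j}$ really is a subgraph of $G$ isomorphic to $C_4$. By the adjacency rule in the definition of the grid graph, each listed pair differs by $1$ in exactly one coordinate, so each is an edge of $G$. The four vertices are distinct, so the closed walk through them is a $4$-cycle.

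The second step is to count the squares. The index set is $\{1,\dots,n-1\}\times\{1,\dots,m-1\}$, and distinct indices give distinct vertex sets, since $(i,j)$ is recovered as the coordinatewise minimum of the vertex set. This gives exactly $(n-1)(m-1)$ copies of $C_4$, one for each unit square of the grid.

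The third step, and the only real subtlety, is fixing what ``decomposed'' means. The squares are not edge-disjoint, since interior edges lie in two squares, so this cannot be an edge-partition. I will interpret the statement the way the earlier lemma for $P_2\square P_m$ does: the union of the $Q_{i,j}$ is all of $G$, and adjacent squares share exactly one edge.

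For the covering claim, take a horizontal edge $(i,j)(i,j+1)$. It lies in $Q_{i,j}$ if $i\le n-1$, and in $Q_{i-1,j}$ if $i=n$. Vertical edges are handled symmetrically. Every vertex lies on some edge, so the union is $G$; this uses $n,m\ge 2$, which I will state as a standing assumption.

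For the sharing claim, $Q_{i,j}\cap Q_{i,j+1}$ consists of the single vertical edge $(i,j+1)(i+1,j+1)$, and $Q_{i,j}\cap Q_{i+1,j}$ is the single horizontal edge $(i+1,j)(i+1,j+1)$. Squares that are not adjacent share at most a vertex, and only when they are diagonal neighbours.

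Finally, I would note the result restricts correctly. For $n=2$, the squares $Q_{1,j}$ are exactly the chain of $m-1$ copies of $C_4$ from the ladder lemma.
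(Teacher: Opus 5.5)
Your proof is correct and takes the same route as the paper. The paper also uses the unit squares $(i,j),(i+1,j),(i+1,j+1),(i,j+1)$ as the copies of $C_4$ and asserts, without further argument, that they cover the grid and meet along edges; your version adds what it omits: the injective indexing that gives exactly $(n-1)(m-1)$ squares, the explicit edge-covering check, and the point that ``decomposed'' must mean a covering rather than an edge-partition, since interior edges lie in two squares.
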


\begin{proof}
Label vertices as $(i,j)$ for $1 \le i \le n$, $1 \le j \le m$. 
Each set of vertices
\[
(i,j), (i+1,j), (i+1,j+1), (i,j+1)
\]
forms a 4-cycle. These cycles cover the grid and intersect along edges.
\end{proof}

\begin{definition}[State Space]
Let $G=P_n \square P_m$, and let $(\mathbf d,\mathbf r)$ be an arithmetical
structure on $G$. For each $1\le i\le m$, define the \emph{state} of the
$i$th column by
$
S_i=(r_{1,i},r_{2,i},\dots,r_{n,i})\in \mathbb Z_{>0}^n,$
where $
\gcd(r_{1,i},r_{2,i},\dots,r_{n,i})=1.$
The set of all such primitive $n$-tuples arising from arithmetical structures
on $P_n \square P_m$ is called the \emph{state space} and is denoted by $S$.
\end{definition}

The graph $P_n \square P_m$ can be decomposed into $(n-1)(m-1)$ subgraphs,
each isomorphic to $C_4$, corresponding to the unit squares of the grid.
In particular, the subgraph induced by any two consecutive columns
$i$ and $i+1$ is isomorphic to $P_n \square P_2$, which consists of
$n-1$ copies of $C_4$ arranged in a chain.

\begin{definition}[Admissible Transitions]
Let $\mathbf a=(a_1,\dots,a_n)$ and
$\mathbf b=(b_1,\dots,b_n)$ be two states in $S$.
We say that $\mathbf a$ is \emph{admissible} (or \emph{compatible}) to
$\mathbf b$, written $\mathbf a \to \mathbf b,$
if there exists an arithmetical structure on $P_n \square P_2$ such that
the first column has state $\mathbf a$ and the second column has state
$\mathbf b$.

Equivalently, for each $1\le k\le n-1$, the restriction to the square
with vertices
\[
(k,i),\ (k,i+1),\ (k+1,i+1),\ (k+1,i)
\]
forms an arithmetical structure on a copy of $C_4$, and these
$n-1$ local structures agree on their shared edges.
\end{definition}

\begin{definition}[Transition Matrix]
Let $S$ be the state space. The \emph{transition matrix}
(or \emph{compatibility matrix}) is the matrix $
T=(t_{\mathbf a,\mathbf b})_{\mathbf a,\mathbf b\in S},$
defined by
\[
t_{\mathbf a,\mathbf b}=
\begin{cases}
1, & \text{if } \mathbf a\to \mathbf b \text{ is admissible},\\
0, & \text{otherwise}.
\end{cases}
\]
\end{definition}


\begin{theorem}\label{thm:grid-walk-correspondence}
Let $G=P_n\square P_m$, and let $\mathcal{S}$ be the finite state space of admissible
column states $S_i = (r_{1,i}, r_{2,i}, \dots, r_{n,i}) \in \mathbb{Z}_{>0}^n$. Let $ r(k,i)|r(k,i + 1), 1<k<n, 1<i<m$. Then arithmetical structures on $G$ are in one-to-one correspondence
with admissible sequences $
(S_1,S_2,\dots,S_m),$
where each $S_i\in \mathcal{S}$ and each transition $
S_i\to S_{i+1}, \qquad 1\le i\le m-1,$
is admissible.

Consequently, the number of arithmetical structures on $G$ is equal to the number
of walks of length $m-1$ in the directed transition graph on $\mathcal{S}$.
If every state can occur as both an initial state and a terminal state, then
\[
N_{n,m}=\mathbf{1}^{T}T^{\,m-1}\mathbf{1},
\]
where $T$ is the transition matrix of $\mathcal{S}$ and $\mathbf{1}$ is the all-ones
vector indexed by $\mathcal{S}$.
\end{theorem}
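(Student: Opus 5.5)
The plan is to follow the same scheme as the proof of Theorem~\ref{tmthm}, replacing the two-entry states $(r_{1,i},r_{2,i})$ by full column vectors $S_i=(r_{1,i},\dots,r_{n,i})$. Admissibility is now tested on the subgraph $P_n\square P_2$ induced by two consecutive columns, rather than on a single $C_4$. I will use the decomposition lemma above: the subgraph on columns $i,i+1$ is a chain of $n-1$ copies of $C_4$ glued along the rungs $(k,i+1)(k+1,i+1)$. Throughout I use the row-stacked form of the vertex equations written before the theorem:
\[
d_{k,i}\,r_{k,i}=r_{k-1,i}+r_{k+1,i}+r_{k,i-1}+r_{k,i+1},
\]
with the convention $r_{0,i}=r_{n+1,i}=r_{k,0}=r_{k,m+1}=0$. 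This splits each equation into a \emph{vertical part} $V_{k,i}=r_{k-1,i}+r_{k+1,i}$, which depends only on $S_i$, and \emph{horizontal parts} $r_{k,i-1}$ and $r_{k,i+1}$, which depend only on $S_{i-1}$ and $S_{i+1}$.

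\emph{Forward direction.} Start from an arithmetical structure $(\mathbf d,\mathbf r)$ on $G$ and set $S_i$ to be the $i$th column of $\mathbf r$; by the definition of $\mathcal S$ each $S_i$ lies in $\mathcal S$. To see that $S_i\to S_{i+1}$ is admissible, restrict $\mathbf r$ to columns $i,i+1$. The equations of $P_n\square P_2$ at $(k,i)$ and $(k,i+1)$ then lose one horizontal term relative to $G$, so the restricted $d$-values are $\bigl(V_{k,i}+r_{k,i+1}\bigr)/r_{k,i}$ and $\bigl(V_{k,i+1}+r_{k,i}\bigr)/r_{k,i+1}$. I will use the hypothesis $r(k,i)\mid r(k,i+1)$ to show these are integers: the term that was dropped is divisible by $r_{k,i}$, so integrality is inherited from $d_{k,i}$. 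After dividing out the gcd, restricting to each square of the chain also gives a structure on $C_4$, matching the ``equivalently'' clause in the definition of admissibility. The map $\mathbf r\mapsto(S_1,\dots,S_m)$ is injective because $\mathbf r$ determines $\mathbf d$.

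\emph{Converse.} Given an admissible sequence, I assemble $\mathbf r$ column by column. For each vertex I define $d_{k,i}$ by the displayed vertex equation, so that $(\operatorname{diag}(\mathbf d)-A(G))\mathbf r=0$ holds by construction. Positivity is automatic, and primitivity follows because each column is already primitive. It remains to prove integrality of $d_{k,i}$. For boundary columns $i=1$ and $i=m$ this is exactly the admissibility of $S_1\to S_2$, respectively $S_{m-1}\to S_m$. For $1<i<m$ the plan is to write
\[
d_{k,i}\,r_{k,i}=\bigl(V_{k,i}+r_{k,i-1}\bigr)+r_{k,i+1}.
\]
The bracketed sum is divisible by $r_{k,i}$ by admissibility of $S_{i-1}\to S_i$, and $r_{k,i+1}$ is divisible by $r_{k,i}$ by the divisibility hypothesis, which gives integrality. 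The counting statement then follows verbatim as in Theorem~\ref{tmthm}: admissible sequences are walks of length $m-1$ in the digraph with adjacency matrix $T$, and they are counted by $\mathbf 1^T T^{m-1}\mathbf 1$ when every state may start and end a walk.

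\emph{Main obstacle.} I expect the hard part to be the gluing at interior columns. Admissibility of $S_{i-1}\to S_i$ and of $S_i\to S_{i+1}$ controls two different partial sums, and in general their combination need not give an integer $d_{k,i}$. The divisibility hypothesis is exactly what repairs this, but it is stated only for $1<k<n$. So I will first check whether the top and bottom rows $k=1,n$ are also covered. If they are not, I will either handle them with the same splitting, using that boundary rows have only one vertical neighbour, or read the hypothesis as applying to all $k$. A second delicate point is primitivity: the restriction to two columns may fail to be primitive. I will handle this by applying the gcd normalisation only to the auxiliary $P_n\square P_2$ and $C_4$ structures, never to $\mathbf r$ itself.
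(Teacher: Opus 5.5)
Your forward direction fails at the integrality claim for the left column of each pair. When you restrict to columns $i,i+1$, the vertex $(k,i)$ loses the term $r_{k,i-1}$, not $r_{k,i+1}$. Its restricted value is therefore $d_{k,i}-r_{k,i-1}/r_{k,i}$, and admissibility of $S_i\to S_{i+1}$ requires $r_{k,i}\mid r_{k,i-1}$ for $1<i<m$. The hypothesis $r_{k,i}\mid r_{k,i+1}$ only handles the right-column vertex $(k,i+1)$, whose dropped term is $r_{k,i+2}$. After an index shift it gives $r_{k,i-1}\mid r_{k,i}$, which is the wrong direction.

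This cannot be patched, because the forward direction is false. On $P_3\square P_3$, take the vector with rows $(1,1,1)$, $(1,3,3)$, $(1,1,2)$. It is an arithmetical structure, with $\mathbf d$ rows $(2,5,4)$, $(5,2,2)$, $(2,6,2)$. All its columns are primitive, and $r_{k,2}\mid r_{k,3}$ for every $k$. Yet $S_2=(1,3,1)\to S_3=(1,3,2)$ is not admissible: on $P_3\square P_2$, the entry $3$ in the first column would have to divide $1+1+3$. Separately, the step \emph{by the definition of $\mathcal S$ each $S_i$ lies in $\mathcal S$} fails when a column is not primitive. An example is the structure with columns $(1,1),(2,2),(1,1)$ on $P_2\square P_3$, where the hypothesis as literally stated is vacuous.

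The counting statement does not follow \emph{verbatim} either. Your converse is correct once the divisibility is imposed on the walk itself and on every row, including $k=1,n$. However, $T$ does not encode that constraint, so $\mathbf 1^{T}T^{m-1}\mathbf 1$ also counts walks that assemble into no structure. Take $n=2$, where the literal hypothesis is empty, and the walk $(2,1)\to(1,2)\to(2,1)$. Both steps are admissible, since each is a primitive arithmetical structure on $C_4$. But the assembled vector fails at $(2,2)$, because $2\nmid 1+1+1$. The same example shows that your fallback of treating the boundary rows \emph{with the same splitting} cannot work.

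Carrying your own computation through shows what is actually true. An admissible walk assembles into a structure if and only if every interior entry $r_{k,i}$ divides both $r_{k,i-1}$ and $r_{k,i+1}$. This forces $S_2=\cdots=S_{m-1}$, with this common column dividing $S_1$ and $S_m$ entrywise, so walks only ever capture structures of that special form. The statement therefore cannot be proved as written; the paper's one-line deferral to Theorem~\ref{tmthm} has the same defect. A correct version must restrict to this class of structures and use a correspondingly restricted walk count.
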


\begin{proof}
Proof follows from the proof of the Theorem~\ref{tmthm}. 
\end{proof}

\section{Conclusion}
In this paper, we studied arithmetical structures on Cartesian product graphs, 
with a particular focus on ladder graphs $P_2 \square P_m$ and grid graphs 
$P_n \square P_m$. Starting from the definition of an arithmetical structure 
$(\mathbf{d}, \mathbf{r})$, we analyzed how such structures behave under 
graph products and how they relate to known results for simpler graph families 
such as paths and cycles.

For ladder graphs, we established structural properties of 
$\operatorname{Arith}(P_2 \square P_m)$ and identified recurring patterns 
that allow these structures to be understood in terms of arithmetical 
structures on $P_m$. We also explored an alternative perspective via 
$C_4$-decomposition, which provides a more refined understanding of the 
local behavior of these structures.

We then extended our study to grid graphs $P_n \square P_m$, where the 
interaction between multiple dimensions introduces additional complexity. 
Despite this, we observed that several structural features from the ladder 
case persist, offering insight into the characterization and potential 
enumeration of arithmetical structures in higher dimensions.

Overall, this work contributes to the growing theory of arithmetical 
structures by extending it to product graphs and highlighting new 
combinatorial phenomena. These results open several directions for 
future research, including explicit enumeration formulas for 
$\operatorname{Arith}(P_n \square P_m)$, deeper connections with 
Laplacian-based invariants, and the study of arithmetical structures 
on more general classes of graph products.

\vspace{0.5cm}
\noindent{\bf Compliance with Ethical Standards.}
\begin{itemize}
\item {\bf Data Availability Statement:} Our manuscript has no associated data. 
\item {\bf Conflict of Interests:} The authors declare no conflict of interest.\\
\end{itemize}

\bibliographystyle{plain}
\bibliography{main}

@book{Berman1994andPlemmons,
  title={Nonnegative matrices in the mathematical sciences},
  author={Berman, Abraham and Plemmons, Robert J},
  year={1994},
  publisher={SIAM}
}

@article{BHDNJC18,
   title={Counting arithmetical structures on paths and cycles},
   volume={341},
   ISSN={0012-365X},
   url={http://dx.doi.org/10.1016/j.disc.2018.07.002},
   DOI={10.1016/j.disc.2018.07.002},
   number={10},
   journal={Discrete Mathematics},
   publisher={Elsevier BV},
   author={Braun, Benjamin and Corrales, Hugo and Corry, Scott and Puente, Luis David García and Glass, Darren and Kaplan, Nathan and L. Martin, Jeremy and Musiker, Gregg and Valencia, Carlos E.},
   year={2018},
 pages={2949--2963} 
}

@article{CHVC18,
  title={Arithmetical structures on graphs with connectivity one},
  author={Corrales, Hugo and Valencia, Carlos E},
  journal={Journal of Algebra and its Applications},
  volume={17},
  number={08},
  year={2018},
  publisher={World Scientific}
}

@misc{CP18,
  title={Divisors and Sandpiles: An Introduction to Chip-Firing},
  author={Glass, Darren},
  year={2020},
  publisher={JSTOR}
}

@article{HCEV18,
  title={Arithmetical structures on graphs},
  author={Corrales, Hugo and Valencia, Carlos E},
  journal={Linear Algebra and its Applications},
  volume={536},
  pages={120--151},
  year={2018},
  publisher={Elsevier}
}

@article{KAAL20,
  title={Arithmetical structures on bidents},
  author={Archer, Kassie and Bishop, Abigail C and Diaz-Lopez, Alexander and Puente, Luis D Garcia and Glass, Darren and Louwsma, Joel},
  journal={Discrete Mathematics},
  volume={343},
  number={7},
  pages={1--23},
  year={2020},
  publisher={Elsevier}
}

@article{LD89,
  title={Arithmetical graphs},
  author={Lorenzini, Dino J},
  journal={Mathematische Annalen},
  volume={285},
  number={3},
  pages={481--501},
  year={1989}
}

@article{criticalpolynomialofgraphbyLorenzini,
  title={The critical polynomial of a graph},
  author={Lorenzini, Dino},
  journal={Journal of Number Theory},
  volume={257},
  pages={215--248},
  year={2024},
  publisher={Elsevier}
}

@article{Corry2018DivisorsAS,
  title={Divisors and Sandpiles: An Introduction to Chip-Firing},
  author={Scott Corry and David Perkinson},
journal={AMS Non-Series Monographs},
volume={114},
  year={2018},
  url={https://api.semanticscholar.org/CorpusID:240373917}
}

@article{AAJL18,
 title={Critical groups of arithmetical structures under a generalized star-clique operation},
  author={Alexander Diaz-Lopez and Joel Louwsma},
journal={Linear Algebra and its Applications},
volume={656},
  year={2023},
 pages={324--344}
}

@article{BANBDCRBY2024,
 title={Arithmetical Structures on Wheel Graphs},
  author={Adhikari, Bibhas and  Behera, Namita and Ram Chhetri, Dilli and  Yadav, Raj Bhawan },
journal={https://arxiv.org/abs/2412.07816},
  year={2024}
}

@article{arithmeticaloncompletegraphs,
  title={On arithmetical structures on complete graphs},
  author={Harris Zachary and Louwsma Joel},
  journal={Involve, a Journal of Mathematics},
  volume={13},
  number={2},
  pages={345--355},
  year={2020},
  publisher={Mathematical Sciences Publishers}
}

@article{catalannumberbycarlitz,
  title={Sequences, paths, ballot numbers},
  author={Carlitz, L},
  journal={The Fibonacci Quarterly},
  volume={10},
  number={5},
  pages={531--549},
  year={1972},
  publisher={Taylor $\&$ Francis}
}

@book{west2001introduction,
  title={Introduction to graph theory},
  author={West, Douglas Brent and others},
  volume={2},
  year={2001},
  publisher={Prentice hall Upper Saddle River}
}

@article{lionelleivnewhatisasandpile,
  title={Simplicial and cellular trees},
  author={Duval, Art M and Klivans, Caroline J and Martin, Jeremy L},
  journal={arXiv preprint arXiv:1506.06819},
  year={2015}
}

@article{keyesReiter2021boundingthenumberofArithstructure,
  title={Bounding the number of arithmetical structures on graphs},
  author={Keyes, Christopher and Reiter, Tomer},
  journal={Discrete Mathematics},
  volume={344},
  number={9},
  pages={1--11},
  year={2021},
  publisher={Elsevier}
}

@article{PathwithDoubleedgeArithstructure,
  title={Arithmetical structures on paths with a doubled edge},
  author={Glass, Darren and Wagner, Joshua},
  journal={arXiv preprint arXiv:1903.01398},
  year={2019}
}

@article{Chipfiringgamesongraph1991,
  title={Chip-firing games on graphs},
  author={Bj{\"o}rner, Anders and Lov{\'a}sz, L{\'a}szl{\'o} and Shor, Peter W},
  journal={European Journal of Combinatorics},
  volume={12},
  number={4},
  pages={283--291},
  year={1991},
  publisher={Elsevier}
}

@article{SandpilegroupoverEllpiticCurves,
  title={The critical groups of a family of graphs and elliptic curves over finite fields},
  author={Musiker, Gregg},
  journal={Journal of Algebraic Combinatorics},
  volume={30},
  number={2},
  pages={255--276},
  year={2009},
  publisher={Springer}
}

@article{RMerris94,
title={Laplacian matrices of graphs: A survey},
author={R. Merris}, 
journal={Linear Algebra Appl.}, 
volume={197/198 },
pages= {143--176},
year={1994}
}

@article{biggs1999,
   title={Chip-firing and the critical group of a graph},
   author={Biggs, Norman},
   journal={Journal of Algebraic Combinatorics},
   volume={9},
   pages={25--45},
   year={1999},
   doi={10.1023/A:1008636923550}
 }

@article{archer2025,
   title={Generalized chip firing and critical groups of arithmetical structures on trees},
   author={Archer, Kevin and Diaz-Lopez, Alejandro and Glass, Daniel and Louwsma, J.},
   journal={arXiv preprint arXiv:2505.05392},
   year={2025},
   note={\url{https://arxiv.org/abs/2505.05392}}
 }

@article{archer2023,
   title={Critical groups of arithmetical structures on star and complete graphs},
   author={Archer, Kevin and Diaz-Lopez, Alejandro and Glass, Daniel and Louwsma, J.},
  journal={arXiv preprint arXiv:2301.02114},
   year={2023},
   note={\url{https://arxiv.org/abs/2301.02114}}
 }

@article{braun2017arithmetical,
   title={Arithmetical structures on paths and cycles},
   author={Braun, Benjamin and Davis, Robert},
   journal={Journal of Combinatorial Theory, Series A},
   volume={150},
   pages={1--23},
   year={2017},
   publisher={Elsevier}
 }

@book{imrich2000product,
   title={Product Graphs: Structure and Recognition},
   author={Imrich, Wilfried and Klav{\v{z}}ar, Sandi},
   year={2000},
    publisher={Wiley}
 }

@article{chhetri2025arithmeticalonFanGraph,
  title={Arithmetical Structures On Fan Graphs},
  author={Chhetri, Dilli Ram and Behera, Namita and Yadav, Raj Bhawan},
  journal={arXiv preprint arXiv:2503.01913},
  year={2025}
}

@book{CartesianProductofGraphs,
  title={Topics in graph theory: Graphs and their Cartesian product},
  author={Imrich, Wilfried and Klavzar, Sandi and Rall, Douglas F},
  year={2008},
  publisher={CRC Press}
}

@book{Harary,
  title={Graph Theory},
  author={F. Harary},
  year={1988},
  publisher={Narosa Publishing House}
}

\end{document}